\newtheorem{proposition}{Proposition}[section]
\newtheorem{lemma}[proposition]{Lemma}
\newtheorem{definition}[proposition]{Definition}
\newtheorem{theorem}[proposition]{Theorem}
\newtheorem{corollary}[proposition]{Corollary}
\newtheorem{example}[proposition]{Example}
\newtheorem{remark}[proposition]{Remark}
\newtheorem{lemma-definition}[proposition]{Lemma-Definition}
\newtheorem{question}[proposition]{Question}
\numberwithin{equation}{section}
\newcommand{\Zz}{\mathbb{Z}}
\newcommand{\Pp}{\mathbb{P}}
\newcommand{\Rr}{\mathbb{R}}
\newcommand{\Hom}{\operatorname{Hom}}
\newcommand{\gr}{\operatorname{gr}}
\newcommand{\la}{\langle}
\newcommand{\ra}{\rangle}
\newcommand{\Oo}{\mathcal{O}}
\def\Coh{\mathop{\mathrm{coh}}\nolimits}
\def\dim{\mathop{\mathrm{dim}}\nolimits}
\def\Ext{\mathop{\mathrm{Ext}}\nolimits}
\def\Hom{\mathop{\mathrm{Hom}}\nolimits}
\def\min{\mathop{\mathrm{min}}\nolimits}
\def\perf{\mathop{\mathrm{perf}}}
\def\supp{\mathop{\mathrm{supp}}}
\def\Star{\mathrm{Star}}
\def\P{\ensuremath{\mathbb{P}}}
\def\Z{\ensuremath{\mathbb{Z}}}
\def\DD{\ensuremath{\mathcal D}}
\def\OO{\ensuremath{\mathcal O}}
\def\TT{\ensuremath{\mathcal T}}
\def\bD{\mathbf D}
\def\bT{\mathbf T}
\def\Aa{\mathbb A}
\def\Gm{\mathbf{G}_{\mathbf m}}
\def\kk{\mathbf k}
\def\aa{ \bar{\mathsf a}}
\def\pp{ \bar{\mathsf p}}
\def\qq{ \bar{\mathsf q}}
\def\rr{ \bar{\mathsf r}}
\def\ee{ \bar{\mathsf \epsilon}}
\def\00{ \bar{\mathsf 0}}
\def\Sn{S_n}
\def\Snt{S_{n, \theta}}
\def\bp{\mathbf p}
\def\bq{\mathbf q}
\def\cchi{ \bar{\chi}}
\def\zz{ \bar{\mathsf z}}
\def\tA{\widetilde{\mathcal A}}
\def\tB{\widetilde{\mathcal B}}
\def\tE{\widetilde{\mathcal E}}
\def\tM{\widetilde{\mathcal M}}
\def\tS{\widetilde{\mathcal S}}
\def\tSn{\widetilde{\mathcal S}_n}
\def\tSnt{\widetilde{\mathcal S}_{n,\theta}}
\def\Mod{\operatorname{Mod}\!\operatorname{--}\!}
\def\Qcoh{\operatorname{Qcoh}}
\def\op{\circ}
\def\GrL{\operatorname{Gr}^{L}\mkern-5mu\operatorname{--}\!}
\def\GrZ{\operatorname{Gr}^{\Z^n}\mkern-6mu\operatorname{--}\!}
\def\grL{\operatorname{gr}^{L}\mkern-5mu\operatorname{--}\!}
\def\oM{\operatorname{M}}
\def\perf{\mathcal{P}\!\mathit{erf}\!\operatorname{--}\!}
\def\lto{\longrightarrow}
\def\yY{\operatorname{Y}}
\def\xX{\operatorname{X}}
\def\kK{\operatorname{K}^{\cdot}}
\def\kKv{\operatorname{K}^{\cdot\vee}}
\def\FF{\ensuremath{\mathcal O}}
\def\js{j_{\Sigma}}
\begin{document}

\title[]{Equivariant exceptional collections on smooth toric stacks}

\begin{abstract}
We study the bounded derived categories of torus-equivariant coherent sheaves on smooth toric varieties and Deligne-Mumford stacks.
We construct and describe full exceptional collections in these categories.
We also observe that these categories depend only on the PL homeomorphism type of the corresponding simplicial complex.
\end{abstract}


\author[]{Lev  Borisov}
\address{Department of Mathematics\\
Rutgers University\\
110 Frelinghuysen Rd.\\
Piscataway, NJ 08854, USA}
\email{borisov@math.rutgers.edu}

\author[]{Dmitri Orlov}
\address{Algebraic Geometry Department, Steklov Mathematical Institute RAS,
8 Gubkin str., Moscow 119991, RUSSIA}
\email{orlov@mi.ras.ru}

\dedicatory{Dedicated to the blessed memory of our late advisor Vasily Alekseevich Iskovskikh on the occasion of his 80th birthday}

\keywords{Toric varieties and stacks, equivariant coherent sheaves, derived categories, exceptional collections, simplicial complexes}
\subjclass[2010]{14F05, 14M25, 55U10}

\maketitle

\section{Introduction}
Smooth toric varieties and Deligne-Mumford stacks are among the more accessible examples of algebro-geometric objects that can be used to build intuition for more general settings. In this paper we initiate the study of the bounded derived categories of torus-equivariant coherent sheaves on these varieties and stacks. Specifically, let $\Pp_\Sigma$ be a smooth toric DM stack or variety with the maximal torus $\bT_{\Pp_\Sigma}.$ Our main object of interest is the bounded derived category
$\bD^b(\Coh[\Pp_\Sigma/\bT_{\Pp_\Sigma}])$  of the Artin stack $[\Pp_\Sigma/\bT_{\Pp_\Sigma}]$ or equivalently the bounded derived category of $\bT_{\Pp_\Sigma}$\!--equivariant sheaves on $\Pp_\Sigma.$

Our main results are the following.

We first consider the important case of $[\Aa^n/\bT]$ for $\bT=\Gm^n.$ We construct a strong full exceptional collection in  $\bD^b(\Coh[\Aa^n/\bT])$ that consists of shifts of equivariant line bundles on torus strata.
This leads to an interesting new description of this category.

\smallskip
\noindent
{\bf Theorem \ref{main1.5}.}\emph{
The derived category $\bD^b(\Coh[\Aa^n/\bT])$ of $\bT$\!--equivariant coherent sheaves on $\Aa^n$ possesses a strong full exceptional collection of objects $E_{a}$ indexed by $a=(a_1,\ldots,a_n)\in\Zz^n$
ordered lexicographically.
The morphism spaces are described as follows.
\[
\Hom(E_a,E_b)=
\left\{
\begin{array}{ll}
\kk,&{\rm if~ for~all~}i {\rm~either~} b_i-a_i\in \{0,1\} {\rm ~or~} (b_i,a_i) = (1,-1),\\
0,&{\rm otherwise.}
\end{array}
\right.
\]
The nontrivial spaces $\Hom(E_a,E_b)$ come equipped with a distinguished generator $\rho_{a,b}.$
The composition of morphisms $\Hom(E_a,E_b)\otimes \Hom(E_b,E_c)\to \Hom(E_a,E_c)$ is nontrivial if and only if all three spaces are nontrivial. In these cases $\rho_{b,c}\circ\rho_{a,b}=\rho_{a,c}.$
}

\smallskip
More generally, we consider $\bT$\!--invariant open subsets $U_\Sigma$ of $\Aa^n$ and
the corresponding Artin stacks $[U_\Sigma/\Gm^n].$ Every such  $U_\Sigma$  is defined in terms of a simplicial complex $\Sigma$ on  $[n]:=\{1,\ldots,n\},$ i.e. an inclusion-closed collection of subsets of $[n].$ A point $\{z_1,\ldots, z_n\}\in \Aa^n$ lies in $U_\Sigma$ if and only if there exists an element $\sigma\in\Sigma$ such that $z_i\neq 0$ for all $i\not\in\sigma.$ The open set $\Aa^n$ corresponds to $\Sigma$ that consists of all subsets of $[n].$

The analog of Theorem \ref{main1.5} in this more general setting is the following.

\smallskip
\noindent
{\bf Theorem \ref{main2.5}.}\emph{
The derived category $\bD^b(\Coh[U_\Sigma/\bT])$ of $\bT$\!--equivariant coherent sheaves on $U_\Sigma$ possesses a strong full exceptional collection of objects $E_{a}$ indexed by $a=(a_1,\ldots,a_n)\in\Zz^n$ such that the set of $i$ for which $a_i\neq 0$ lies in $\Sigma.$
The objects $E_a$ are ordered lexicographically.
The morphism space $\Hom(E_a,E_b)$ is isomorphic to $\kk$ if and only if for all $i$ either $b_i-a_i\in \{0,1\}$ or $(b_i,a_i) = (1,-1)$ and, moreover, the set of $i$ for which $a_i$ or $b_i$ are nonzero lies in $\Sigma.$ All other morphism spaces are zero. 
The nontrivial spaces $\Hom(E_a,E_b)$ come equipped with a distinguished generator $\rho_{a,b}.$
The composition of morphisms $\Hom(E_a,E_b)\otimes \Hom(E_b,E_c)\to \Hom(E_a,E_c)$ is nontrivial if and only if all three spaces are nontrivial. In these cases $\rho_{b,c}\circ\rho_{a,b}=\rho_{a,c}.$
}

\smallskip
The general case of quotients of smooth toric DM stacks $\P_\Sigma$ by  their maximal tori $\bT_{\Pp_\Sigma}$ is reduced to the setting of  Theorem \ref{main2.5} because every Artin stack $[\Pp_\Sigma/\bT_{\Pp_\Sigma}]$ 
is isomorphic to the Artin stack $[U_{\Sigma}/\bT].$ 
In particular, our construction applies to arbitrary smooth toric varieties.

It is worth mentioning that we were motivated by the work of Kawamata \cite{Kawamata} in the non-equivariant setting,
but our construction is very explicit, unlike the inductive description of the exceptional collection in \cite{Kawamata}. 
The equivariant setting allows for a much wider range of exceptional objects, such as the structure sheaves of torus invariant points, so it is perhaps not surprising that the equivariant category has a much nicer description than its non-equivariant counterpart. 

We briefly comment on the proofs of Theorems \ref{main1.5} and \ref{main2.5} which are contained in Sections \ref{sec2} and \ref{sec3} respectively. The objects $E_a$ are certain cohomological  shifts of equivariant twists of the (pushforwards of) structure sheaves of the closures of torus orbits given by 
$\bigcap_{i\in I}\{z_i=0\}$ for subsets $I\subseteq [n].$ We calculate  the $\Ext$ spaces between these sheaves using standard techniques of toric geometry, such as Koszul and \v{C}ech complexes. The exceptional collection is  proved to be full by showing that it generates arbitrary equivariant line bundles which in turn generate the whole derived category.
We try to keep our paper self-contained whenever possible  in hope of making it accessible to a wider audience.

We proceed in Section \ref{sec4} to investigate to
what extent the derived category  $\bD^b(\Coh[\Pp_\Sigma/\bT_{\Pp_\Sigma}])$  depends on $\Pp_\Sigma.$
This can be viewed as a study of a categorical invariant of simplicial complexes which
associates $\bD^b(\Coh[U_\Sigma/\bT])$ to $\Sigma.$ 

To describe our main result in this area,
recall that to any abstract simplicial complex $\Sigma$ one can associate a topological space $|\Sigma|$ called the polyhedron of $\Sigma$ (it is also called the geometric realization of $\Sigma$\!). It can be constructed by taking a copy $\Delta^{k-1}(\sigma)$ of a standard topological $(k-1)$-simplex for each $\sigma\in\Sigma$ with $|\sigma|=k,$ and gluing them together according to the face relations encoded in $\Sigma.$
More precisely, we have
\[
|\Sigma|=\Big\{x\in\Rr^n_{\ge 0}\; \Big|\; \sum_{i=1}^{n} x_i=1,\; \text{and}\quad \supp x\in\Sigma\Big\},
\]
where $\Rr^n_{\ge 0}\subset\Rr^n$ is the closed positive orthant and $\supp x=\{1\le i\le n\; |\; x_i\ne 0\}\subseteq\{1,\dots,n\}.$
In other words, the topological space $|\Sigma|$ consists of all points of  $\Rr^n_{\ge 0}$ with sum of coordinates equal to
$1$ whose set of non-zero coordinates corresponds to a simplex in $\Sigma.$

A map $f:P\to Q$ between two polyhedra is called piecewise linear (PL) if it is continuous and the graph $\Gamma_f=\{(x, f(x))| x\in P\}$ is a polyhedron.
A PL-homeomorphism is a PL-map which has a PL inverse (see, e.g., \cite{Hu}).
It is known that any two PL-homeomorphic simplicial complexes can be connected by a finite number of certain combinatorial moves, known as stellar subdivisions and stellar welds  (see, e.g.,
\cite{Lickorish}). These moves have a geometric description as equivariant blowups and blowdowns of the corresponding toric varieties, see \cite{BKF}.

Somewhat unexpectedly, we observe in Theorem \ref{main3}  that $\bD^b(\Coh[\Pp_\Sigma/\bT_{\Pp_\Sigma}])$  is unchanged under blowups of $\Pp_\Sigma.$ This implies that the category depends only on the piecewise linear (PL) homeomorphism type of the  simplicial complex $\Sigma.$ 

\smallskip
\noindent
{\bf Theorem \ref{main4}.}\emph{
Let $\Sigma_1$ and $\Sigma_2$ be two simplicial complexes.
If polyhedra $|\Sigma_1|$ and $|\Sigma_2|$ are PL-homeomorphic, then the  derived categories
$\bD^b(\Coh[U_{\Sigma_1}/\bT_1])$ and $\bD^b(\Coh[U_{\Sigma_2}/\bT_2])$
 of torus-equivariant coherent sheaves on $U_{\Sigma_1}$ and $U_{\Sigma_2}$ respectively are equivalent.
}

It remains unclear whether $\bD^b(\Coh[\Pp_\Sigma/\bT])$ is invariant under arbitrary homeomorphisms of $|\Sigma|$ as opposed to only PL-homeomorphisms.

\smallskip
Finally, in Section \ref{sec.5} we collect two miscellaneous results: one about derived categories of $\bT$\!--equivariant coherent sheaves on noncommutative toric varieties and another about full exceptional collections of equivariant line bundles on $\Pp^2$ without three torus-fixed points.

{\bf Acknowledgments.} L.B. has been partially supported by NSF grant DMS-1601907. D.O. has been partially supported by Program of the Presidium of the Russian Academy of Sciences No. 01 ``Fundamental Mathematics
and its Applications" under grant PRAS-18-01. We thank Alexander Kuznetsov for multiple useful suggestions. We also thank the Institute for Advanced Study for its hospitality.

\section{Exceptional collections of \texorpdfstring{$\Gm^n$\!}{Gmn}\!--equivariant sheaves on \texorpdfstring{$\Aa^n$}{An}}
\label{sec2}

Let $\Aa^n$ be the n-dimensional affine space over a field $\kk$ and let $\bT=\Gm^n$ be the n-dimensional torus that naturally acts on $\Aa^n.$
Consider the  toric  quotient stack $[\Aa^n/\bT]$ and the abelian category of coherent sheaves $\Coh[\Aa^n/\bT]$ on it.
In this section we describe the bounded derived category of coherent sheaves $\bD^b(\Coh[\Aa^n/\bT])$ on this  toric  quotient stack $[\Aa^n/\bT].$
We will exhibit two full exceptional collections in the derived category
$\bD^b(\Coh[\Aa^n/\bT]),$ one that consists of equivariant line bundles on the whole space and another that consists of certain line bundles on its coordinate subspaces.

The abelian category of coherent sheaves $\Coh[\Aa^n/\bT]$ on the quotient stack is equivalent to the abelian category of equivariant coherent
sheaves $\Coh^{\bT}\Aa^n$
on the affine space  $\Aa^n$ with respect to the natural action of the torus $\bT=\Gm^n.$ On the other hand,
the abelian category of equivariant coherent sheaves $\Coh^{\bT}\!\Aa^n$ is equivalent to the abelian category of finitely generated
$\Zz^n$\!--graded
modules $\gr^{\Zz^n}\mkern-6mu\operatorname{--}\Sn$ over the polynomial coordinate ring $\Sn=\kk\left[z_1,\ldots,z_n\right]$ with
the natural $\Zz^n$\!--grading. (Specifically, we can consider the
group
$\Zz^n$ of maps from $[n]=\{1,\ldots, n\}$ to $\Zz$ and assign to a monomial
$\prod_{i=1}^n z_i^{a_i}$ the element ${\aa }:[n]\to \Zz$ with ${\aa}(i)=a_i.$)
The abelian category of quasi-coherent sheaves $\Qcoh[\Aa^n/\bT]$ and $\Qcoh^{\bT}\!\Aa^n$ are equivalent to each other and are equivalent
to the abelian category of all
$\Zz^n$\!--graded
modules $\GrZ \Sn.$

For any grading ${\pp}\in \Zz^n$ we define the equivariant line bundle $\Oo(-{\pp})$ on the affine space $\Aa^n$ as the coherent sheaf that
corresponds to
the module $\Sn(-{\pp})$ which is the free module $\Sn$ with the grading shifted so that the generator
of the module has degree ${\pp}.$ More generally, the twist functor $({\pp})$ on the category $\GrZ \Sn$ is defined as
follows: it takes a graded module $M=\mathop{\bigoplus}_{\qq\in \Zz^n} M_{\qq}$ to
the module $M(\pp)$ for which $M(\pp)_{\qq}= M_{\pp+\qq}$ and takes a morphism
$f: M\to N$ to the same morphism viewed as a morphism between the
twisted modules $f(\pp): M(\pp)\to N(\pp).$

The following statement is quite immediate.
\begin{proposition}\label{linebundles}
The set of $\bT$\!--equivariant line bundles $\big\{\Oo({\pp}),\;{\pp}\in \Zz^n\big\}$ forms a full exceptional collection in the derived
category $\bD^b(\Coh[\Aa^n/\bT]).$
\end{proposition}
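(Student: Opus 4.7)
The plan is to work with the equivalent model $\GrZ\Sn$, where $\Oo(\pp)$ corresponds to the free graded module $\Sn(-\pp)$, and carry out all computations at the level of these free modules.

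First I would compute $\RHom(\Oo(\pp),\Oo(\qq))$ directly. Since $\Sn(-\pp)$ is projective (indeed free) in $\GrZ\Sn$, all higher $\Ext$ groups vanish automatically, and
\[
\Hom(\Oo(\pp),\Oo(\qq)) \;=\; \Hom_{\GrZ\Sn}(\Sn(-\pp),\Sn(-\qq)) \;=\; \Sn_{\pp-\qq}.
\]
The graded piece $\Sn_{\pp-\qq}$ is one-dimensional, spanned by the monomial $z^{\pp-\qq}$, when $\pp-\qq\in\Zz_{\ge 0}^n$, and vanishes otherwise. Setting $\pp=\qq$ immediately yields $\Hom(\Oo(\pp),\Oo(\pp))=\kk$ with all higher $\Ext$ groups zero, so each $\Oo(\pp)$ is exceptional.

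Next I would address the semiorthogonality required for an exceptional collection. I would fix a total order on $\Zz^n$ compatible with componentwise inequality in the sense that $\pp\ne\qq$ with $\pp\ge\qq$ coordinatewise forces $\pp$ to precede $\qq$; reverse lexicographic order works, since in that situation the first nonzero entry of $\pp-\qq$ is positive. For any pair $\pp,\qq$ with $\pp$ strictly after $\qq$, we then have $\pp\not\ge\qq$ componentwise, so $\Hom^*(\Oo(\pp),\Oo(\qq))=0$ by the computation above, as required.

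Finally, fullness reduces to the graded Hilbert syzygy theorem. Any object of $\bD^b(\Coh[\Aa^n/\bT])$ is represented by a finitely generated $\Zz^n$-graded $\Sn$-module $M$ admitting a finite resolution by finite direct sums of shifts $\Sn(-\pp)$, so $M$ is an iterated extension of objects $\Oo(\pp)$, which generate the triangulated category. The main subtlety throughout is the choice of total order on $\Zz^n$ compatible with the partial order governing nonvanishing of $\Hom$; every other step is a direct consequence of freeness of $\Sn(-\pp)$ together with finite global dimension of $\Sn$.
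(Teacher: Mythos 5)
Your argument is correct and essentially identical to the paper's: both compute $\Hom$ and the vanishing of higher $\Ext$ via the equivalence with $\Zz^n$-graded $\Sn$-modules (the paper phrases the $\Ext$-vanishing through affineness plus reductivity of $\bT$, you through projectivity of graded free modules — the same fact), both use a lexicographic-type refinement of the coordinatewise partial order for semiorthogonality, and both deduce fullness from finite free graded resolutions. The only discrepancy is notational: the paper's convention has $\Oo(-\pp)$ corresponding to $\Sn(-\pp)$, so your identification reverses the direction of the nonzero $\Hom$'s, which is harmless for the statement.
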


\begin{proof}
Since $\Aa^n$ is affine and $\bT$ is reductive, 
$$
\Ext^*_{[\Aa^n/\bT]}(F,G) = (\Ext^*_{\Aa^n}(F,G))^{\bT}.
$$
Thus there are no non-trivial higher Ext spaces between these sheaves and we have
$$
\Hom_{[\Aa^n/\bT]}(\Oo({\pp}_1),\Oo({\pp}_2)) \cong \left\{
\begin{array}{ll}
\kk, &\text{if }\quad \pp_1\leq \pp_2\\
0, &\text{otherwise.}
\end{array}
\right.
$$
Here the notation $\pp_1\leq \pp_2$ means that $\pp_1(i)\leq \pp_2(i)$ for all $i\in[n].$
We can order $\Oo({\pp})$ lexicographically to ensure that there are no maps in one of the directions.

\smallskip
To prove that the collection is full, observe that every $\bT$\!--equivariant coherent sheaf on $\Aa^n$ corresponds to
a $\Zz^n$\!--graded  module over $\Sn$ which has a finite free $\Zz^n$\!--graded resolution.
\end{proof}

\begin{remark}
{\rm
The collection of Proposition \ref{linebundles}  happens to be a strong exceptional collection.
}
\end{remark}

We will now construct another full exceptional collection in the category $\bD^b(\Coh[\Aa^n/\bT]).$
To describe this collection we introduce some additional notation.

\begin{definition}\label{sheaves}
Let $I\subseteq[n]$ be a subset of $[n]:=\{1,\ldots, n\}.$ Let $\pp:[n]\to\Zz$ be a function.
We denote by ${\FF}_{I, \pp}$ the $\bT$\!--equivariant sheaf on  $\Aa^n$ associated to
the $\Zz^n$\!--graded module
$$
M_{I, \pp}:=\kk\left[z_1,\ldots, z_n\right]/\la z_i,i\in I\ra (-\pp).
$$
\end{definition}

\begin{definition}\label{basic_weights}
We  define  the basic weights
$\ee_i:[n]\to \Zz$ that correspond to the variables $z_i,$ i.e.
there are equalities
$\ee_i(j) = \delta_{ij}.$
For any subset $S\subseteq [n]$
we also introduce the characteristic function  $\cchi_S= \sum_{k\in S} \ee_k.$
\end{definition}

\begin{proposition}\label{ext}
Let ${\FF}_{I, \pp}$ and ${\FF}_{J, \qq}$ be the sheaves in
the category $\Coh[\Aa^n/\bT]$ defined above. Then
$
\Ext^s_{[\Aa^n/\bT]}({\FF}_{I, \pp},  {\FF}_{J, \qq})=0
$
unless the  following two conditions hold:
\begin{itemize}
\item[a)] $(\qq - \pp)\Big|_{\overline{I\cup J}}\le 0,$ i.e. $\qq(i)\leq \pp(i)$ for all $i\not\in I\cup J$
\item[b)] $(\qq - \pp)\Big|_{I\cup J}=\cchi_{S}$ for some subset $S\subseteq [n]$ with $|S|=s$ that satisfies $S\subseteq I\subseteq S\cup J.$
\end{itemize}
\end{proposition}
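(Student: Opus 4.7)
The plan is to compute $\Ext^s_{[\Aa^n/\bT]}(\FF_{I,\pp},\FF_{J,\qq})$ by resolving $M_{I,\pp}$ via the Koszul complex of the regular sequence $(z_i)_{i\in I}$,
\[
\cdots\to\bigoplus_{T\subseteq I,\,|T|=k}\Sn(-\pp-\cchi_T)\to\cdots\to\Sn(-\pp)\to M_{I,\pp}\to 0,
\]
and analyzing the resulting Hom-complex. Applying $\Hom_{[\Aa^n/\bT]}(-,M_{J,\qq})$ yields a cochain complex with
\[
C^s=\bigoplus_{T\subseteq I,\,|T|=s}(M_{J,\qq})_{\pp+\cchi_T},\qquad (d\phi)_T=\sum_{i\in T}\pm z_i\,\phi_{T\setminus i}.
\]
Since $M_{J,\qq}$ has the monomial $\kk$-basis $\{z^b:b\ge 0,\ b|_J=0\}$ with $z^b$ sitting in degree $\qq+b$, each summand is at most one-dimensional; writing $c:=\qq-\pp$, the summand $(M_{J,\qq})_{\pp+\cchi_T}$ is spanned by $z^{\cchi_T-c}$ exactly when that vector is nonnegative and supported off $J$, and vanishes otherwise.

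A coordinate-by-coordinate inspection of the nonnegativity and support conditions shows that a single summand can be nonzero only if all of the following hold: $c_i\le 0$ for $i\not\in I\cup J$ (this is already condition~(a)); $c_i=0$ for $i\in J\setminus I$; $c_i\in\{0,1\}$ for $i\in I\cap J$; and $c_i\le 1$ for $i\in I\setminus J$. If any one of these fails then $C^\bullet$ is identically zero and $\Ext^\ast=0$. Otherwise, set
\[
T_{\mathrm{fix}}=\{i\in I:c_i=1\},\qquad T_{\mathrm{free}}=\{i\in I\setminus J:c_i\le 0\};
\]
the nonzero summands of $C^\bullet$ are precisely those indexed by $T=T_{\mathrm{fix}}\cup T'$ with $T'\subseteq T_{\mathrm{free}}$, sitting in cohomological degree $|T_{\mathrm{fix}}|+|T'|$.

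The Koszul differential restricts to this sub-complex because removing any $i\in T_{\mathrm{fix}}$ from $T$ lands in a summand that was just shown to vanish. The surviving contributions involve multiplication by $z_i$ for $i\in T_{\mathrm{free}}\subseteq I\setminus J$, and these act as nonzero scalars on the one-dimensional monomial components. Identifying each nonzero summand with $\kk$ via its monomial generator, the sub-complex becomes, after a shift by $s_0:=|T_{\mathrm{fix}}|$, isomorphic to the tensor product of $|T_{\mathrm{free}}|$ copies of the two-term complex $(\kk\xrightarrow{\ 1\ }\kk)$. Each factor is acyclic, so the whole sub-complex is acyclic whenever $T_{\mathrm{free}}\ne\emptyset$.

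Therefore $\Ext^s\ne 0$ forces $T_{\mathrm{free}}=\emptyset$, i.e.\ $c_i=1$ for every $i\in I\setminus J$, together with $s=|T_{\mathrm{fix}}|$. Taking $S:=T_{\mathrm{fix}}$ then recovers exactly condition~(b): $S\subseteq I\subseteq S\cup J$, $(\qq-\pp)|_{I\cup J}=\cchi_S$, and $|S|=s$. The main delicacy of the plan is the identification in the previous paragraph: one must track the Koszul signs carefully to confirm that the restricted differential really is a signed copy of the stated tensor product of trivial two-term complexes, and in particular that no ``cross terms'' coming from the forced directions $i\in T_{\mathrm{fix}}$ spill into the free part. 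This bookkeeping is combinatorial and I do not expect it to pose a real obstacle.
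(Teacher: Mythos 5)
Your computation is correct, and it reaches the same answer as the paper by a somewhat different route. The paper resolves \emph{both} modules by Koszul complexes, rewrites $\kK_{I}\otimes\kK_{J}\cong\kK_{I\cup J}\otimes\kK_{I\cap J}$, and observes that the induced differential on $\kK_{I\cap J}\otimes M_{I\cup J}$ vanishes identically (each $z_r$, $r\in I\cap J$, kills $M_{I\cup J}$); this turns the Ext computation into a direct sum of degree-zero components of twists of $M_{I\cup J}$, with no acyclicity argument needed, and the resulting formula \eqref{descrip_Ext} is reused later via the local-to-global spectral sequence. You instead resolve only $M_{I,\pp}$, apply graded $\Hom(-,M_{J,\qq})$, and analyze the degree-zero Hom complex directly: the constraints on $\qq-\pp$ fall out of the support/nonnegativity conditions on the monomial generators, and the vanishing in the presence of a ``free'' direction comes from contracting a tensor factor $\kk\xrightarrow{1}\kk$. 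Your approach trades the paper's structural observation (zero differential) for an explicit acyclicity argument; the sign bookkeeping you flag is genuinely harmless, since the components of the differential indexed by elements of $T_{\mathrm{fix}}$ are maps out of zero summands, and the surviving complex on subsets of $T_{\mathrm{free}}$ is the augmented cochain complex of a simplex (exact for $T_{\mathrm{free}}\neq\emptyset$, e.g.\ by the standard contracting homotopy that toggles a fixed element). Both arguments implicitly use the same reduction, stated in the paper, that equivariant Ext on $[\Aa^n/\bT]$ is the degree-zero part of graded Ext over $\Sn$; your phrase ``applying $\Hom_{[\Aa^n/\bT]}(-,M_{J,\qq})$'' is doing exactly that and is fine.
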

\begin{proof}
The calculation of Ext spaces between the sheaves ${\FF}_{I, \pp}$ and   ${\FF}_{J, \qq}$ reduces to the calculation of Ext spaces between
graded modules $M_{I, \pp}$ and $M_{J, \qq}$ in the category of graded modules $\GrZ \Sn,$ i.e. we have
$$
\Ext^s_{[\Aa^n/\bT]}({\FF}_{I, \pp},  {\FF}_{J, \qq})\cong\Ext^s_{\GrZ \Sn}(M_{I, \pp}, M_{J, \qq}).
$$

These groups can be computed by taking  projective resolutions of the arguments.
Any module $M_{I,\pp}$ is resolved by the total Koszul complex of the form
\begin{equation}\label{K}
\kK_{I}(-\pp):= \Big\{ \Sn(-\pp-\cchi_{I})\to \cdots \to \bigoplus_{S\subseteq I,| S| =s}
\Sn(-\pp-\cchi_S)\to \cdots \to \Sn(-\pp) \Big\},
\end{equation}
where the part with $|S| = s$ lies in cohomological degree $-s.$ The complex $\kK_{I}$ can be represented as
the tensor product
\[
\kK_{I}\cong\bigotimes_{i\in I}\kK_{\{i\}}:=\bigotimes_{i\in I}\Big\{\Sn(-\ee_i) \lto \Sn \Big\}.
\]
For the dual complex $\kKv_{I}$ we have isomorphisms
\[
\kKv_{I}\cong\bigotimes_{i\in I}\kKv_{\{i\}}\cong\bigotimes_{i\in I}\Big\{\Sn \lto \Sn(\ee_i) \Big\}[-1]\cong
\bigotimes_{i\in I}\kK_{\{i\}}(\ee_i)[-1]\cong\kK_{I}(\cchi_{I})[-| I|],
\]
so the usual non-graded $\Ext$ spaces between $\Sn$\!--modules $M_{I, \pp}$ and $M_{J, \qq}$ are the cohomology of the following tensor product:
\begin{equation*}
\Ext^s_{\Sn}(M_{I, \pp}, M_{J, \qq})\cong H^s(\kKv_{I}(\pp) \otimes \kK_{J}(-\qq))\cong
H^s(\kK_{I}\otimes\kK_{J}(\pp-\qq+\cchi_{I})[-| I|]).
\end{equation*}

We can rearrange the tensor product as follows
\[
\kK_{I}\otimes\kK_{J}\cong\bigotimes_{i\in I}\kK_{\{i\}}\otimes\bigotimes_{j\in J}\kK_{\{j\}}\cong
\bigotimes_{l\in I\cup J}\kK_{\{l\}}\otimes\bigotimes_{r\in I\cap J}\kK_{\{r\}}\cong
\kK_{I\cup J}\otimes\kK_{I\cap J},
\]
i.e., as the Koszul complex for $\{z_l, l\in I\cup J\}$ tensored with
the Koszul complex for $\{z_r, r\in I\cap J\}.$
Since the complex $\kK_{I\cup J}$
 is a resolution of the module $M_{I\cup J}=\Sn/\la z_l, l\in I\cup J\ra$
we obtain isomorphisms
\[
\kK_{I}\otimes\kK_{J}\cong \kK_{I\cup J}\otimes\kK_{I\cap J}
= \Big\{
M_{I\cup J}(-\cchi_{I\cap J})\to \cdots \to \bigoplus_{\substack{T\subseteq I\cap J,\\ |T| =t}}
M_{I\cup J}(-\cchi_T)\to \cdots \to M_{I\cup J}
\Big\}.
\]
Finally, we  observe that all differentials in this complex are identically zero, because multiplication with any $z_r,$
when $r \in I\cap J,$ is zero in
$M_{I\cup J}=\Sn/\la z_l, l\in I\cup J\ra.$ Therefore, we obtain an isomorphism of complexes
\[
\kK_{I}\otimes\kK_{J}\cong
\bigoplus_{\emptyset\subseteq T\subseteq I\cap J} M_{I\cup J}(-\cchi_{T})[|T|].
\]

\smallskip
As a result, in order to calculate the equivariant $\Ext$ spaces
$
\Ext^*_{[\Aa^n/\bT]}(\FF_{I,\pp},\FF_{J,\qq})
$
we need to find the $\Zz^n$\!--degree zero component of the complex
\begin{equation}\label{ext25}
\kKv_{I}(\pp) \otimes \kK_{J}(-\qq)\cong\kK_{I}\otimes\kK_{J}(\pp-\qq+\cchi_{I})[-| I|]\cong
\bigoplus_{\emptyset\subseteq T\subseteq I\cap J} M_{I\cup J}(\pp-\qq+\cchi_{I}-\cchi_{T})[-| I|+|T|].
\end{equation}
We have a nontrivial (one-dimensional) $\Zz^n$\!--degree zero component in $M_{I\cup J}(\pp-\qq+\cchi_{I}-\cchi_{T})[-| I|+|T|]$
if and only if
$\pp-\qq+\cchi_{I}-\cchi_{T}$ takes nonnegative values for all $i\not\in I\cup J$ and takes zero values
for all $i\in I\cup J.$
This implies that $(\qq-\pp)|_{\overline{I\cup J}}\le 0$ and $(\qq-\pp)|_{I\cup J}$
determines $T\subseteq I\cap J$ uniquely.
Thus, the difference $(\qq-\pp)|_{I\cup J}$ should be the characteristic function $\cchi_S$ of some subset $S\subseteq I$ that contains
$I\backslash (I\cap J).$
In this case $T$ consists of all $r \in I\cap J$ for which $\qq(r)-\pp(r)=0,$ i.e. the subset $T\in I\cap J$ is the complement to $S$
in $I.$
Observe that the cohomological degree is then equal to
\[
| I\backslash T | =| S|= \sum_{i\in I\cup J} (\qq(i)-\pp(i)).
\]
Thus, we obtain the following description of Ext spaces:
\begin{align}\label{descrip_Ext}
\Ext^r_{[\Aa^n/\bT]}(\FF_{I,\pp},\FF_{J,\qq})\cong \bigoplus_{I\backslash (I\cap J) \subseteq S\subseteq I}
H^{r-|S|} ([\Aa^n/\bT], \FF_{I\cup J}(\pp-\qq+\cchi_{S})).
\end{align}
This implies the statement
\[
\Ext^s_{[\Aa^n/\bT]}({\FF}_{I, \pp},  {\FF}_{J, \qq}) \cong
\left\{
\begin{array}{ll}
\kk, &\text{if}\quad (\qq - \pp)\Big|_{\overline{I\cup J}}\le 0,\quad
\text{and}\quad (\qq - \pp)\Big|_{I\cup J}=\cchi_{S}\; \\
&
\text{ for some } S \text{ with } |S|=s \text{ and } S\subseteq I\subseteq S\cup J
,\\
0, &\text{otherwise.}
\end{array}
\right.
\]
\end{proof}

\begin{remark}\label{rem25}
The above proof also contains a calculation of (the modules corresponding to) the $\Ext$ sheaves between $\FF_{I,\pp}$ and $\FF_{J,\qq}$
in \eqref{ext25} which will be used in the next section.
\end{remark}
We will now observe that a subset of  these sheaves forms a full exceptional collection.

\begin{proposition}\label{cn}
The sheaves ${\FF}_{I, \pp},$ where $I$ runs over all subsets of $[n]$ and $\pp:[n]\to\Zz$ runs over all functions with the property $\pp(i)=0$
for all $i\not\in I,$ form a full exceptional collection in the derived category $\bD^b(\Coh[\Aa^n/\bT])$
under the lexicographic ordering by $\pp$ and $|I|.$
\end{proposition}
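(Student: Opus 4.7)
The plan is to verify exceptionality and semi-orthogonality directly from Proposition~\ref{ext}, and then to prove fullness via induction using a short exact sequence obtained from multiplication by a coordinate. For the first step, assume $\Ext^s(\FF_{I,\pp},\FF_{J,\qq})\neq 0$. Combining the nonvanishing conditions of Proposition~\ref{ext} with the support constraints $\pp|_{\overline I}\equiv 0$ and $\qq|_{\overline J}\equiv 0$, a case analysis on the region of $[n]$ in which $i$ lies forces $\pp(i)=-1$ on $I\setminus J$, $\qq(i)=0$ on $J\setminus I$, and $\qq(i)-\pp(i)\in\{0,1\}$ on $I\cap J$. In particular $\qq\ge\pp$ componentwise, with equality iff $I\subseteq J$. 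Specializing to $(I,\pp)=(J,\qq)$ forces $S=\emptyset$, giving $\Hom(\FF_{I,\pp},\FF_{I,\pp})=\kk$ and vanishing of all higher Ext's, which is exceptionality. For $(I,\pp)\neq(J,\qq)$ with nonzero Ext, if $\pp\neq\qq$ then $\pp\le\qq$ componentwise implies $\pp<\qq$ lexicographically; if $\pp=\qq$ then $I\subsetneq J$, so $|I|<|J|$. In both cases $(I,\pp)<(J,\qq)$ in the stated ordering, establishing semi-orthogonality.

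For fullness, Proposition~\ref{linebundles} reduces matters to showing that every line bundle $\Oo(\pp)$ lies in the triangulated subcategory generated by our collection. Since $\FF_{\emptyset,\pp}=\Oo(-\pp)$, I would prove the stronger statement that $\FF_{I,\pp}$ belongs to this subcategory for \emph{every} $I\subseteq[n]$ and \emph{every} $\pp\in\Zz^n$, dropping the support constraint that characterizes membership in the collection. The key tool is the short exact sequence of $\Zz^n$-graded $\Sn$-modules
\[
0\lto\FF_{I,\pp}\xrightarrow{\,z_j\,}\FF_{I,\pp-\ee_j}\lto \FF_{I\cup\{j\},\pp-\ee_j}\lto 0,\qquad j\notin I,
\]
obtained from the fact that for $j\notin I$ the element $z_j$ is a nonzerodivisor on $\Sn/\la z_i,i\in I\ra$ with cokernel $\Sn/\la z_i,i\in I\cup\{j\}\ra$, together with a direct check that the degree shifts match. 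I then induct on the size $|(I,\pp)|:=\sum_{i\notin I}|\pp(i)|$. The base case $|(I,\pp)|=0$ says that $\pp$ is supported in $I$, so $\FF_{I,\pp}$ is in the collection. For the inductive step, pick $j\notin I$ with $\pp(j)\neq 0$: if $\pp(j)>0$ apply the sequence above, and if $\pp(j)<0$ replace $\pp$ by $\pp+\ee_j$ to obtain $0\to\FF_{I,\pp+\ee_j}\to\FF_{I,\pp}\to\FF_{I\cup\{j\},\pp}\to 0$. In either case the two flanking terms have strictly smaller size and, by induction, lie in the generated subcategory, so $\FF_{I,\pp}$ does as well.

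The main (relatively minor) technical hurdle is bookkeeping around the $\Zz^n$-grading: one must confirm that multiplication by $z_j$ really gives a degree-zero morphism into the specified twist $\FF_{I,\pp-\ee_j}$---this shift being forced by the convention that the generator of $\Sn(-\pp)$ sits in degree $\pp$---and that both sign cases of $\pp(j)$ in the induction genuinely decrease $|(I,\pp)|$. Once these checks are in place, the induction with $I=\emptyset$ produces $\Oo(-\pp)$ in the generated subcategory for every $\pp\in\Zz^n$, and combining this with Proposition~\ref{linebundles} completes the proof of fullness.
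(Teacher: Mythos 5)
Your proposal is correct and follows essentially the same route as the paper: exceptionality and semi-orthogonality are read off from Proposition \ref{ext} exactly as in the text, and fullness uses the same short exact sequence $0\to\FF_{I,\pp+\ee_k}\to\FF_{I,\pp}\to\FF_{I\cup\{k\},\pp}\to 0$ together with the reduction to equivariant line bundles via Proposition \ref{linebundles}. The only cosmetic difference is that you organize the fullness induction by the single quantity $\sum_{i\notin I}|\pp(i)|$, whereas the paper runs a downward induction on strata (on $|I|$) and then moves $\pp$ one step at a time; the two are logically equivalent.
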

\begin{proof}
Since $\pp_2(i)=\pp_1(i)=0$ for all $i\not\in I_1\cup I_2,$ by Proposition \ref{ext} the Ext spaces 
$$
\Ext^s_{[\Aa^n/\bT]}( {\FF}_{I, \pp_1}, {\FF}_{I, \pp_2})
$$
are zero unless $\pp_2 -\pp_1=\cchi_{S}\geq 0,$ and in the case $\pp_2=\pp_1$ we have $S=\emptyset,$ and $I_1\subseteq I_2.$ This ensures that we get an exceptional collection.

To show that the exceptional collection is full, we run induction on strata. By picking $I=[n]$ we see that
all twists $ {\FF}_{[n], \pp}$ of the structure sheaf of the point ${\zz}=0$ are in the exceptional collection, since there are no
restrictions on $\pp.$ Suppose that we have proved that the subcategory  of $\bD^b(\Coh[\Aa^n/\bT])$
generated by the exceptional collection
contains  ${\FF}_{I', \pp}$  for all $\pp$ and all proper supersets $I'\supset I.$ Then we can use the short
exact sequences for each $k\not\in I$
\[
0\lto \FF_{I,\pp+\ee_k} \lto \FF_{I, \pp} \lto \FF_{I\cup\{k\}, \pp}\to 0
\]
to go from $\FF_{I, \pp}$ with $\pp(i)=0$ for all $i\not\in I$ to $\FF_{I, \pp}$ with arbitrary $\pp.$
Indeed, by induction assumption, the last term of the above short exact sequence is in the subcategory, which allows us to
add or subtract $\ee_k$ to any $\pp$ with $\FF_{I, \pp}$ in the subcategory and still stay within it.
This eventually allows us to consider $\FF_{\emptyset, \pp},$ which are the equivariant sheaves $\OO(-\pp).$
Thus the exceptional collection generates all of $\bD^b(\Coh[\Aa^n/\bT])$  by Proposition \ref{linebundles}.
\end{proof}

\begin{example}
{\rm
For $n=1$ the exceptional collection is given by
$$
\langle
\ldots, \FF_{\{1\},-2}, \FF_{\{1\},-1} ,  \FF_{\emptyset ,0}, \FF_{\{1\},0},  \FF_{\{1\},1}, \FF_{\{1\},2}, \ldots
\rangle
$$
where we identify $\pp:[1]\to \Zz$ with $\pp(1).$ The nontrivial (one-dimensional) morphism spaces are
$
\Hom( \FF_{\emptyset ,0} ,\FF_{\{1\},0}) 
$ 
and $
\Ext^1( \FF_{\{1\} ,k} ,\FF_{\{1\},k+1}) 
$
for all $k\in \Zz.$ One can also verify that the composition of morphisms is nonzero in the only possible case
of 
$$
\Ext^1( \FF_{\{1\} ,-1} ,  \FF_{\emptyset ,0}) \times \Hom(   \FF_{\emptyset ,0},\FF_{\{1\},0}) 
\to \Ext^1( \FF_{\{1\} ,-1},\FF_{\{1\},0}).
$$
}
\end{example}

\begin{remark}
{\rm
Note that the shifted sheaves $\big\{{\FF}_{I, \pp}[\bp]\big\},$ where $\bp=\sum_{i=1}^n \pp(i),$ form a strong full exceptional
collection
in the derived category $\bD^b(\Coh[\Aa^n/\bT]).$
}
\end{remark}

\bigskip
\begin{remark}
{\rm
The calculation of Proposition \ref{cn} is a particular case of a more general phenomenon applicable to the equivariant derived categories of
toric varieties and stacks which we will consider in the next section.
}
\end{remark}

\begin{remark}
{\rm
We will illustrate this exceptional collection in the case $n=2$ in diagram  \eqref{diagram1}.
The square $\Box$ corresponds to the line bundle $\FF=\FF_{\emptyset,{\bf 0}}$ on the whole $\Aa^2.$ The bullets $\bullet$ indicate the line bundles supported on the coordinate lines and $\circ$ indicate the equivariant rank one skyscraper sheaves at the origin in $\Aa^2.$ The locations of the $\circ$ points are in accordance with the grading, whereas $\bullet$ and $\Box$ are offset to the left and/or down.
The solid arrows indicate nonzero $\Hom$ spaces which correspond to the restriction maps to smaller coordinate subspaces in $\Aa^2,$ i.e. $S=\emptyset.$
The dashed arrows indicate nonzero $\Ext^1$ spaces and the dotted arrows stand for $\Ext^2$ spaces.
}
\end{remark}


\begin{align}\label{diagram1}
\xymatrix{
\circ\ar@[blue]@{-->}[rr] &&
\circ\ar@[blue]@{-->}[rr] & &\circ \ar@[blue]@{-->}[r]\ar@/^3ex/@[blue]@{-->}[rr] &\bullet\ar@[red][r] &
\circ\ar@[blue]@{-->}[rr]&&\circ\ar@[blue]@{-->}[rr]&&
\circ\ar@[blue]@{-->}[rr]&&\circ
\\
\\
\circ\ar@[blue]@{-->}[rr]\ar@[blue]@{-->}[uu]\ar@[red]@{.>}[uurr] &&
\circ\ar@[blue]@{-->}[rr]\ar@[blue]@{-->}[uu]\ar@[red]@{.>}[uurr] &&
\circ \ar@[blue]@{-->}[r]\ar@[blue]@{-->}[uu]\ar@[red]@{.>}[uurr] |!{[r];[uur]}\hole
\ar@/^/@[red]@{.>}[uur] \ar@/^3ex/@[blue]@{-->}[rr]
&
\bullet\ar@[red][r]\ar@[blue]@{-->}[uu]\ar@/_/@[blue]@{-->}[uur] &
\circ\ar@[blue]@{-->}[rr]\ar@[blue]@{-->}[uu]\ar@[red]@{.>}[uurr]&&\circ\ar@[blue]@{-->}[rr]\ar@[blue]@{-->}[uu]\ar@[red]@{.>}[uurr]&&
\circ\ar@[blue]@{-->}[rr]\ar@[blue]@{-->}[uu]\ar@[red]@{.>}[uurr]&&\circ\ar@[blue]@{-->}[uu]
\\
\\
\circ\ar@[blue]@{-->}[rr]\ar@[blue]@{-->}[uu]\ar@[red]@{.>}[uurr] &&
\circ\ar@[blue]@{-->}[rr]\ar@[blue]@{-->}[uu]\ar@[red]@{.>}[uurr] & &\circ
\ar@[blue]@{-->}[r]\ar@[blue]@{-->}[uu]\ar@[red]@{.>}[uurr]|!{[r];[uur]}\hole
\ar@/^/@[red]@{.>}[uur]\ar@/^3ex/@[blue]@{-->}[rr] &\bullet\ar@[red][r]\ar@[blue]@{-->}[uu]\ar@/_/@[blue]@{-->}[uur] &
\circ\ar@[blue]@{-->}[rr]\ar@[blue]@{-->}[uu]\ar@[red]@{.>}[uurr]&&\circ\ar@[blue]@{-->}[rr]\ar@[blue]@{-->}[uu]\ar@[red]@{.>}[uurr]&&
\circ\ar@[blue]@{-->}[rr]\ar@[blue]@{-->}[uu]\ar@[red]@{.>}[uurr]&&
\circ\ar@[blue]@{-->}[uu]
\\
\bullet\ar@[blue]@{-->}[rr]\ar@[red][u]\ar@[blue]@{-->}@/^/[urr] &&
\bullet\ar@[blue]@{-->}[rr]\ar@[red][u]\ar@[blue]@{-->}@/^/[urr] &&
\bullet \ar@[blue]@{-->}[r]\ar@[red][u]\ar@/^/@[blue]@{-->}[urr]|!{[r];[ur]}\hole \ar@/^/@[blue]@{-->}[ur]\ar@/^2ex/@[blue]@{-->}[rr]
&
\Box\ar@[red][r]\ar@[red][u]\ar@[red][ur] &
\bullet\ar@[blue]@{-->}[rr]\ar@[red][u]\ar@/^/@[blue]@{-->}[urr]&&\bullet\ar@[blue]@{-->}[rr]\ar@[red][u]\ar@/^/@[blue]@{-->}[urr]&&
\bullet\ar@[blue]@{-->}[rr]\ar@[red][u]\ar@/^/@[blue]@{-->}[urr]&&\bullet\ar@[red][u]
\\
\circ\ar@[blue]@{-->}[rr]\ar@[blue]@{-->}[u]\ar@[red]@{.>}[uurr]|!{[u];[urr]}\hole \ar@[red]@{.>}@/_/[urr]\ar@/^3ex/@[blue]@{-->}[uu]
&&
\circ\ar@[blue]@{-->}[rr]\ar@[blue]@{-->}[u]\ar@[red]@{.>}[uurr]|!{[u];[urr]}\hole \ar@[red]@{.>}@/_/[urr]\ar@/^3ex/@[blue]@{-->}[uu]
&&
\circ \ar@[blue]@{-->}[r]\ar@[blue]@{-->}[u]\ar@/_2ex/@[red]@{.>}[uurr]\ar@[red]@{.>}[ur] \ar@/^/@[red]@{.>}[uur]|!{[u];[ur]}\hole
\ar@/_/@[red]@{.>}[urr]|!{[r];[ur]}\hole
\ar@/_3ex/@[blue]@{-->}[rr]\ar@/^3ex/@[blue]@{-->}[uu]
&
\bullet\ar@[red][r]\ar@[blue]@{-->}[u]\ar@/_/@[blue]@{-->}[uur]|!{[u];[ur]}\hole \ar@/_/@[blue]@{-->}[ur]
\ar@/_2ex/@[blue]@{-->}[uu]
&
\circ\ar@[blue]@{-->}[rr]\ar@[blue]@{-->}[u]\ar@[red]@{.>}[uurr]|!{[u];[urr]}\hole
\ar@/_/@[red]@{.>}[urr]\ar@/_3ex/@[blue]@{-->}[uu]&&
\circ\ar@[blue]@{-->}[rr]\ar@[blue]@{-->}[u]\ar@[red]@{.>}[uurr]|!{[u];[urr]}\hole
\ar@/_/@[red]@{.>}[urr]\ar@/_3ex/@[blue]@{-->}[uu]&&
\circ\ar@[blue]@{-->}[rr]\ar@[blue]@{-->}[u]\ar@[red]@{.>}[uurr]|!{[u];[urr]}\hole
\ar@/_/@[red]@{.>}[urr]\ar@/_3ex/@[blue]@{-->}[uu]
&&
\circ\ar@[blue]@{-->}[u]\ar@/_3ex/@[blue]@{-->}[uu]
\\
\\
\circ\ar@[blue]@{-->}[rr]\ar@[blue]@{-->}[uu]\ar@[red]@{.>}[uurr] &&
\circ\ar@[blue]@{-->}[rr]\ar@[blue]@{-->}[uu]\ar@[red]@{.>}[uurr] & &\circ
\ar@[blue]@{-->}[r]\ar@[blue]@{-->}[uu]\ar@[red]@{.>}[uurr]|!{[r];[uur]}\hole
\ar@/^/@[red]@{.>}[uur]\ar@/_3ex/@[blue]@{-->}[rr] &\bullet\ar@[red][r]\ar@[blue]@{-->}[uu]\ar@/_/@[blue]@{-->}[uur] &
\circ\ar@[blue]@{-->}[rr]\ar@[blue]@{-->}[uu]\ar@[red]@{.>}[uurr]&&\circ\ar@[blue]@{-->}[rr]\ar@[blue]@{-->}[uu]\ar@[red]@{.>}[uurr]&&
\circ\ar@[blue]@{-->}[rr]\ar@[blue]@{-->}[uu]\ar@[red]@{.>}[uurr]&&\circ\ar@[blue]@{-->}[uu]
\\
\\
\circ\ar@[blue]@{-->}[rr]\ar@[blue]@{-->}[uu]\ar@[red]@{.>}[uurr] &&
\circ\ar@[blue]@{-->}[rr]\ar@[blue]@{-->}[uu]\ar@[red]@{.>}[uurr] & &\circ
\ar@[blue]@{-->}[r]\ar@[blue]@{-->}[uu]\ar@[red]@{.>}[uurr]|!{[r];[uur]}\hole
\ar@/^/@[red]@{.>}[uur]\ar@/_3ex/@[blue]@{-->}[rr] &\bullet\ar@[red][r]\ar@[blue]@{-->}[uu]\ar@/_/@[blue]@{-->}[uur] &
\circ\ar@[blue]@{-->}[rr]\ar@[blue]@{-->}[uu]\ar@[red]@{.>}[uurr]&&\circ\ar@[blue]@{-->}[rr]\ar@[blue]@{-->}[uu]\ar@[red]@{.>}[uurr]&&
\circ\ar@[blue]@{-->}[rr]\ar@[blue]@{-->}[uu]\ar@[red]@{.>}[uurr]&&\circ\ar@[blue]@{-->}[uu]
}
\end{align}

Let us now describe some basic indecomposable morphisms between sheaves of the form $\FF_{I, \pp}.$
At first, let us fix a weight $\pp:[n]\to\Zz$ and consider a pair of sheaves $\FF_{I, \pp}$ and $\FF_{J, \pp}$
such that $J=I\cup \{j\}$ for some $j\not\in I.$
In this case there is a canonical morphism
\[
\rho_{I, \pp}^{j}: \FF_{I, \pp}\lto \FF_{J, \pp},
\]
which is actually the natural restriction of the sheaf $\FF_{I, \pp}$ on the affine subvariety
$\Aa^{n-|J|}=\left\{ z_k=0 \, |\;\text{for all}\; k\in J \right\}.$ This morphism corresponds
to $S=\emptyset$ in Proposition \ref{ext}.
It is easy to see that the restriction map from $\FF_{I, \pp}$ to $\FF_{J, \pp}$
for any $J\supset I$
is equal to a composition of the maps of the form $\rho_{J_k, \pp}^{j_k}$ for some $J_k$ and $j_k\not\in J_k.$
In particular, in the case $J=I\cup \{j_1, j_2\}$ we have equality
\begin{equation}
\rho_{J_1, \pp}^{j_2}\circ \rho_{I, \pp}^{j_1}=\rho_{J_2, \pp}^{j_1}\circ \rho_{I, \pp}^{j_2},
\end{equation}
where $J_r=I\cup \{j_r\}$ with $r=1,2.$

Second, let us consider the case when $I_1= I\backslash\{i\}$ for some element $i\in I.$
In this situation we have a canonical element
\[
\psi_{I, \pp}^{i}\in \Ext^1(\FF_{I, \pp}, \FF_{I_1, \pp_1})\cong\kk,
\]
when $\pp_1=\pp+\ee_i.$ This element is uniquely defined as a morphism
from $\FF_{I, \pp}$ to $\FF_{I_1, \pp_1}[1]$ that determines the following
short exact sequence
\begin{align}\label{short}
\xymatrix{
0\ar[r]& \FF_{I_1, \pp_1} \ar^{\cdot z_{i}}[r]& \FF_{I_1, \pp} \ar^{\rho_{I_1, \pp}^i}[r]& \FF_{I, \pp}\ar[r]& 0.
}
\end{align}

The following lemmas say that all nontrivial $\Ext$\!--spaces in our exceptional collection are generated by morphisms of the form $\rho_{I, \pp}^{j}$ of degree $0$ and morphisms
$\psi_{I, \pp}^{i}$
of degree $1.$

\begin{lemma}\label{comp_prim}
Let $I, J\subseteq [n]$ be two subsets and let $S\subseteq I$ with $|S|=s$  satisfy $I \subseteq J\cup S.$
Let $i\in I$ be an element and $I_1=I\backslash \{i\}$  be the complement.
Then for any $\pp$ and $\qq=\pp+\cchi_{S}$ we have
\begin{itemize}
\item[1)] if $i\not\in S,$ then the composition
\[
\Ext^{s}(\FF_{I,\pp},\FF_{J,\qq})\otimes \Hom(\FF_{I_{1},\pp},\FF_{I,\pp})\stackrel{\sim}{\lto} \Ext^s(\FF_{I_1,\pp},\FF_{J,\qq})
\]
is an isomorphism;
\item[2)] if $i\in S,$ then the composition
\[
\Ext^{s-1}(\FF_{I_1,\pp_1},\FF_{J,\qq})\otimes \Ext^1(\FF_{I,\pp},\FF_{I_1,\pp_1})\stackrel{\sim}{\lto} \Ext^s(\FF_{I,\pp},\FF_{J,\qq})
\]
is an isomorphism, where $\pp_1=\pp+\ee_i.$
\end{itemize}
\end{lemma}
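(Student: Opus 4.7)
The plan is to deduce both isomorphisms from the long exact sequence of $\Ext^{\bullet}(-,\FF_{J,\qq})$ applied to the short exact sequence \eqref{short}
\[
0\lto \FF_{I_1,\pp+\ee_i}\lto \FF_{I_1,\pp}\stackrel{\rho_{I_1,\pp}^{i}}{\lto} \FF_{I,\pp}\lto 0.
\]
By construction, pullback along $\rho_{I_1,\pp}^{i}$ realises the composition map in case 1), and the connecting homomorphism realises the composition map in case 2), since $\psi_{I,\pp}^{i}$ was \emph{defined} as the extension class of \eqref{short}. Before checking vanishing, I would first use Proposition \ref{ext} to confirm that all four source and target $\Ext$-groups appearing in the statement are one-dimensional over $\kk$; in case 2) this amounts to taking $S'=S\setminus\{i\}$ of size $s-1$ and verifying $S'\subseteq I_1\subseteq S'\cup J$ by elementary set manipulations.

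For case 1), the key observation is that $i\in I\subseteq J\cup S$ together with $i\notin S$ forces $i\in J$. I would then apply Proposition \ref{ext} to $\Ext^{\bullet}(\FF_{I_1,\pp+\ee_i},\FF_{J,\qq})$: condition (b) demands that $(\qq-\pp-\ee_i)|_{I_1\cup J}=\cchi_{S}-\ee_i$ be the characteristic function of some subset of $I_1\cup J$, but this vector takes the value $-1$ at $i\in J$. Consequently the whole graded $\Ext^{\bullet}(\FF_{I_1,\pp+\ee_i},\FF_{J,\qq})$ vanishes, killing simultaneously both the $\Ext^{s-1}$ and $\Ext^{s}$ neighbours in the long exact sequence, whence the pullback map is an isomorphism.

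For case 2), where $i\in S$, I would instead show that the entire graded $\Ext^{\bullet}(\FF_{I_1,\pp},\FF_{J,\qq})$ vanishes, via a short two-case split: if $i\notin J$ then $i\notin I_1\cup J$ and $\cchi_{S}(i)=1>0$ violates condition (a); if $i\in J$ then $\cchi_{S}|_{I_1\cup J}=\cchi_{S}$ would need to coincide with $\cchi_{S'}$ for some $S'\subseteq I_1$, which is impossible because $i\in S\setminus I_1$. This vanishing forces the connecting homomorphism in case 2) to be an isomorphism as well.

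The main technical point is really just careful bookkeeping of conditions (a) and (b) of Proposition \ref{ext}; the only slightly non-routine ingredient is the case split $i\in J$ versus $i\notin J$ in case 2), where in either sub-case the candidate $\Ext$ is killed immediately by whichever of (a) or (b) becomes violated.
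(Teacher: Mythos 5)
Your proof is correct and follows essentially the same route as the paper's: apply $\Ext(-,\FF_{J,\qq})$ to the short exact sequence \eqref{short}, use Proposition \ref{ext} to show that the entire graded $\Ext^{\bullet}$ from the appropriate third term ($\FF_{I_1,\pp_1}$ in case 1, $\FF_{I_1,\pp}$ in case 2) vanishes, and identify the surviving isomorphism with precomposition by $\rho_{I_1,\pp}^{i}$ or with the connecting map given by the extension class $\psi_{I,\pp}^{i}$. Your bookkeeping (in particular the observation that $i\notin S$ forces $i\in J$, and the two-case split in case 2) spells out details the paper leaves implicit, but the argument is the same.
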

\begin{proof}
Let us consider the standard short exact sequence \eqref{short}
where $\pp_1=\pp+\ee_{i}$ and $\FF_{I_1,\pp_1}=\FF_{I_1, \pp}(-\ee_{i_1}).$
Recall that this extension between $\FF_{I, \pp}$ and $\FF_{I_1, \pp_1}$ is given by
the element $\psi_{I, \pp}^{i}\in\Ext^1(\FF_{I,\pp},\FF_{I_1,\pp_1}).$

If $i\not\in S,$ then $\qq(i)-\pp_1(i)=-1$ and, hence, by Proposition \ref{ext} we have vanishing
\[
\Ext^t(\FF_{I_1, \pp_1}, \FF_{J, \qq})=0
\]
for all $t.$ The long exact sequence obtained by applying $\Ext(-,\FF_{J,\qq})$ to \eqref{short},  implies that the natural map
\[
\Ext^{s}(\FF_{I,\pp},\FF_{J,\qq})\stackrel{\sim}{\lto} \Ext^s(\FF_{I_1,\pp},\FF_{J,\qq})
\]
is an isomorphism. But this map is exactly the composition with the nontrivial element
$\rho_{I_1, \pp}^i$ that spans $\Hom(\FF_{I_1, \pp}, \FF_{I, \pp}).$

If $i\in S,$ then $\qq(i)-\pp(i)=1$ and $i\not\in I_1.$ By Proposition \ref{ext} we have vanishing
\[
\Ext^t(\FF_{I_1, \pp}, \FF_{J, \qq})=0
\]
for all $t.$ The same long exact sequence as above  implies that the natural map
\[
\Ext^{s-1}(\FF_{I_1, \pp_1}, \FF_{J, \qq})\stackrel{\sim}{\lto}
\Ext^s(\FF_{I, \pp}, \FF_{J, \qq})
\]
is an isomorphism. But this map is exactly the composition with the generator
$\psi_{I, \pp}^{i}$ of $\Ext^{1}(\FF_{I, \pp}, \FF_{I_1, \pp_1}).$
\end{proof}

Let $I, J$ be two arbitrary subsets of $[n],$ and let $S$ be a subset of  $I$ that contains $I\backslash (I\cap J)$ so that 
for any $\pp$ and $\qq=\pp+\cchi_{S}$ by Proposition \ref{ext} we have $\Ext^s(\FF_{I,\pp},\FF_{I',\pp'})\cong \kk,$ when $s=|S|.$
Consider an arbitrary ordering $(i_1,\ldots, i_s)$  of all elements of the set $S.$ It defines a sequence of subsets $I=I_0\supset I_1\supset\ldots\supset I_s,$ where $I_k\subseteq I$ is the complement to the subset $\{i_1,\ldots, i_k\}\subseteq S\subseteq I$ in $I.$
For any $0\le k< s$ we know that
$\Ext^1(\FF_{I_{k},\pp_{k}},\FF_{I_{k+1},\pp_{k+1}})\cong \kk,$ where $\pp_0=\pp$ and $\pp_k$ is defined by induction $\pp_k=\pp_{k-1}+\ee_k.$
Moreover, there is the canonical generator $\psi_{I_k, \pp_{k}}^{i_{k+1}}$ of this space that defines the extension.
The set $I_s$ is contained in $J$ and $\qq=\pp_s.$
Therefore, there is the restriction morphism from $\FF_{I_s, \pp_s}$ to $\FF_{J, \qq}$ that corresponds to the empty subset of $I_s.$
Now we can prove the following lemma.

\begin{lemma}\label{decomp}
Let $I, J\subseteq [n]$ be two sets and let a subset $S\subseteq I$ contain $I\backslash (I\cap J).$
Let $i_1,\ldots, i_s$ be an arbitrary ordering of all elements of the set $S$ and
$I_k\subseteq I$ be the complement to the subset $\{i_1,\ldots, i_k\}\subseteq S\subseteq I$ in $I$ for any $0\le k\le s.$
Then for any $\pp$ and $\qq=\pp+\cchi_{S}$ the composition map
\begin{align}\label{decomp_form}
\Hom(\FF_{I_s,\pp_s},\FF_{J,\qq})\otimes \Ext^1(\FF_{I_{s-1},\pp_{s-1}},\FF_{I_s,\pp_s})\otimes\cdots\otimes \Ext^1(\FF_{I,\pp},\FF_{I_1,\pp_1})\stackrel{\sim}{\lto} \Ext^s(\FF_{I,\pp},\FF_{J,\qq})
\end{align}
is an isomorphism, where $\pp_k=\pp+\sum_{l=1}^k \ee_l.$
\end{lemma}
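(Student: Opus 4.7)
The plan is to deduce this from repeated application of Lemma \ref{comp_prim}, part 2). The hypothesis $i_k \in S$ at each step makes part 2) (rather than part 1)) the relevant tool: it will let me peel off one $\Ext^1$ factor at a time while preserving the conditions needed for the next iteration.

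First, I would set up bookkeeping. For $0 \le k \le s$ let $S_k := S\setminus\{i_1,\ldots,i_k\} = \{i_{k+1},\ldots,i_s\}$, so that $I_k = I\setminus\{i_1,\ldots,i_k\}$ and $\pp_k = \pp + \sum_{l=1}^k \ee_{i_l}$. The three facts I need for Lemma \ref{comp_prim}, part 2), applied at step $k$ (with $I$ replaced by $I_{k-1}$, with element $i_k$, weight $\pp_{k-1}$, target $\FF_{J,\qq}$, and exponent $s-k+1$) are: (i) $i_k \in S_{k-1}$, which is clear; (ii) $S_{k-1}\subseteq I_{k-1}\subseteq J\cup S_{k-1}$; and (iii) $\pp_{k-1} + \cchi_{S_{k-1}} = \qq$. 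Condition (iii) is a direct identity since $\cchi_S = \sum_{l=1}^s \ee_{i_l}$. For (ii), the containment $S_{k-1}\subseteq I_{k-1}$ follows from $S\subseteq I$ with the same elements removed; and for $I_{k-1}\subseteq J\cup S_{k-1}$, take $x\in I_{k-1}$: then $x\in I \subseteq J\cup S$ by the hypothesis $I\setminus(I\cap J)\subseteq S$, so either $x\in J$ or $x\in S$, and in the latter case $x\notin\{i_1,\ldots,i_{k-1}\}$ forces $x\in S_{k-1}$.

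With the hypotheses verified, Lemma \ref{comp_prim}, part 2) yields, for each $1\le k\le s$, an isomorphism
\[
\Ext^{s-k}(\FF_{I_k,\pp_k},\FF_{J,\qq})\otimes \Ext^1(\FF_{I_{k-1},\pp_{k-1}},\FF_{I_k,\pp_k})\stackrel{\sim}{\lto} \Ext^{s-k+1}(\FF_{I_{k-1},\pp_{k-1}},\FF_{J,\qq}),
\]
realized as Yoneda composition with the generator $\psi_{I_{k-1},\pp_{k-1}}^{i_k}$. I then chain these isomorphisms for $k=1,2,\ldots,s$: associativity of Yoneda composition identifies the resulting iterated composition with the map \eqref{decomp_form}, proving it is an isomorphism onto $\Ext^s(\FF_{I,\pp},\FF_{J,\qq})$.

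It remains to confirm the terminal factor $\Hom(\FF_{I_s,\pp_s},\FF_{J,\qq})$ is one-dimensional and generated by the restriction morphism, which pins down the source of the iterated Yoneda chain. Here $\qq - \pp_s = \cchi_S - \sum_{l=1}^s\ee_{i_l} = 0$, and $I_s = I\setminus S \subseteq J$ because any $x\in I\setminus S$ lies in $J\cup S$ by the hypothesis yet not in $S$. Proposition \ref{ext} (with $S' = \emptyset$) then gives a one-dimensional $\Hom$ generated by the restriction morphism. I do not anticipate a serious obstacle; the only care needed is the bookkeeping of how $S$, $I$, $\pp$, and the target data transform along the chain, which is handled by the three conditions above.
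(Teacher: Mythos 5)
Your proof is correct and is essentially the paper's argument: the paper runs an induction on $s=|S|$ whose inductive step is exactly your step-$k$ application of Lemma \ref{comp_prim}, part 2), so your explicit iteration (with the hypothesis bookkeeping $S_{k-1}\subseteq I_{k-1}\subseteq J\cup S_{k-1}$ and $\qq=\pp_{k-1}+\cchi_{S_{k-1}}$) is just the unwound form of the same induction. The extra verification that the terminal factor $\Hom(\FF_{I_s,\pp_s},\FF_{J,\qq})$ is one-dimensional and spanned by the restriction map is harmless and consistent with Proposition \ref{ext}.
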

\begin{proof}
We prove this lemma using induction on $s=|S|.$ For $S=\emptyset$ there is nothing to prove.
Consider $I_1\subset I$ that is the complement to $\{i_1\}.$ By induction hypothesis the composition map
\[
\Hom(\FF_{I_s,\pp_s},\FF_{J,\qq})\otimes \Ext^1(\FF_{I_{s-1},\pp_{s-1}},\FF_{I_s,\pp_s})\otimes\cdots\otimes \Ext^1(\FF_{I_1,\pp_1},\FF_{I_2,\pp_2})\stackrel{\sim}{\lto} \Ext^{s-1}(\FF_{I_1,\pp_1},\FF_{J,\qq})
\]
is an isomorphism. Now, it is sufficient to show that the composition
\[
\Ext^{s-1}(\FF_{I_1,\pp_1},\FF_{J,\qq})\otimes\Ext^1(\FF_{I,\pp},\FF_{I_1,\pp_1})\stackrel{\sim}{\lto} \Ext^s(\FF_{I,\pp},\FF_{J,\qq})
\]
is an isomorphism, which follows from Lemma \ref{comp_prim}.
\end{proof}

We can explicitly determine the combinatorial condition that implies that the composition of the $\Ext$ spaces is nonzero. Specifically,
suppose we have nonzero $\Ext^s(\FF_{I,\pp},\FF_{J,\qq})\ne 0$ and $\Ext^t(\FF_{J,\qq},\FF_{K,\rr})\ne 0,$
which correspond to sets $S$ and $T$  of cardinalities $s=|S|$ and $t=|T|,$ with $S\subseteq I\subseteq S\cup J$ and $T\subseteq J\subseteq T\cup K.$

By Proposition \ref{ext} we have $\qq-\pp=\cchi_{S}$ and $\rr-\qq=\cchi_{T}.$
Thus, if  $S\cap T\neq\emptyset,$ then the difference $\rr-\pp$ is not a characteristic function, because it takes value
$2$ on $S\cap T.$ In this case, the composition is clearly zero. Otherwise, to
 ensure $\Ext^{s+t}(\FF_{I,\pp},\FF_{K,\rr})\neq 0,$ we must have
\[
S\sqcup T \subseteq I \subseteq S\sqcup T \cup K.
\]
The second inclusion is automatically satisfied. Indeed, since $S\cup J\supseteq I$ and $T\cup K\supseteq J$
we have $(S\sqcup T)\cup K\supseteq I.$ Therefore,  we just need to make sure
that $T\subseteq I.$ Thus, $T$ should belong to $I\cap J$ and $S\cap T=\emptyset.$
We will show below that under these necessary conditions  the natural composition is indeed nonzero.

\begin{proposition}\label{compos}
Let $I, J, K\subseteq [n]$ be three sets and let subsets $S\subseteq I$ and $T\subseteq J$
contain $I\backslash (I\cap J)$ and $J\backslash(J\cap K),$ respectively.
Let $\FF_{I, \pp}, \FF_{J, \qq}, \FF_{K, \rr} $ be the sheaves in
the category $\Coh[\Aa^n/\bT]$ such that $\qq=\pp+\cchi_{S}$ and $\rr=\qq+\cchi_{T}.$
Assume that $T\subseteq I\cap J$ and $S\cap T=\emptyset.$ Then the following properties hold
\begin{itemize}
\item[1)] the spaces
$\Ext^{s}(\FF_{I,\pp},\FF_{J,\qq}), \Ext^t(\FF_{J,\qq},\FF_{K,\rr}),$ and  $\Ext^{s+t}(\FF_{I,\pp},\FF_{K,\rr})$
are not trivial, when $s=|S|$ and $t=|T|.$
\item[2)] the natural composition
\[
\Ext^{t}(\FF_{J,\qq},\FF_{K,\rr})\otimes \Ext^s(\FF_{I,\pp},\FF_{J,\qq})\stackrel{\sim}{\lto} \Ext^{s+t}(\FF_{I,\pp},\FF_{K,\rr})
\]
is an isomorphism.
\end{itemize}
\end{proposition}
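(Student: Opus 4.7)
The plan is to reduce everything to Lemmas \ref{comp_prim} and \ref{decomp}, combined with a naturality argument relating the restriction maps $\rho$ and the extension classes $\psi$.

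For part 1, I would apply Proposition \ref{ext} with the candidate set $S' = S \sqcup T$; the disjointness hypothesis $S \cap T = \emptyset$ gives $\cchi_{S'} = \cchi_{S} + \cchi_{T}$, so $(\rr - \pp)|_{I \cup K} = \cchi_{S \sqcup T}$ while $(\rr - \pp)|_{\overline{I \cup K}} \le 0$ (both inherited from the analogous conditions for the pairs $(\pp, \qq)$ and $(\qq, \rr)$ guaranteed by the nonvanishing of the two given $\Ext$ spaces). The inclusions $S \sqcup T \subseteq I$ and $I \subseteq (S \sqcup T) \cup K$ follow directly from $T \subseteq I \cap J$, $S \subseteq I$ and the given inclusions $I \subseteq S \cup J$, $J \subseteq T \cup K$.

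For part 2, I would first apply Lemma \ref{decomp} to factor each of the two generators into elementary pieces. Fixing arbitrary orderings $(i_1, \dots, i_s)$ of $S$ and $(j_1, \dots, j_t)$ of $T$, the generator of $\Ext^{s}(\FF_{I, \pp}, \FF_{J, \qq})$ is expressed as $\rho_1 \circ \psi^{i_s} \circ \cdots \circ \psi^{i_1}$, where $I_s = I \setminus S \subseteq J$ and $\rho_1 : \FF_{I_s, \qq} \to \FF_{J, \qq}$ is the restriction map. Similarly, the generator of $\Ext^{t}(\FF_{J, \qq}, \FF_{K, \rr})$ factors through $\FF_{J_t, \rr}$ as a tower of $\psi^{j_l}$'s followed by a restriction $\rho_2 : \FF_{J_t, \rr} \to \FF_{K, \rr}$. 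The heart of the argument is then a single commutation identity $\psi_{J, \cdot}^{j} \circ \rho = \rho' \circ \psi_{I_s, \cdot}^{j}$ valid for any $j \in T$ (note $j \in I_s$ since $T \subseteq I$ and $S \cap T = \emptyset$), obtained by comparing the short exact sequences \eqref{short} for $A = I_s$ and $A = J$ via the evident restriction maps; the resulting square of extensions commutes by naturality.

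Iterating this single-step identity through the $t$-fold composition of $\psi^{j_l}$'s (with the appropriate shifts and weight updates) rearranges the total composition into $(\rho_2 \circ \rho_1') \circ (\psi^{j_t} \circ \cdots \circ \psi^{j_1} \circ \psi^{i_s} \circ \cdots \circ \psi^{i_1})$, where $\rho_1'$ is the induced restriction from $\FF_{I \setminus (S \sqcup T), \rr}$ to $\FF_{J_t, \rr}$ and $\rho_2 \circ \rho_1'$ is simply the restriction from $\FF_{I \setminus (S \sqcup T), \rr}$ to $\FF_{K, \rr}$. By the verification in paragraph one, the set $S \sqcup T$ satisfies the hypotheses of Lemma \ref{decomp} for computing $\Ext^{s+t}(\FF_{I, \pp}, \FF_{K, \rr})$, so the rearranged composition is precisely the generator described by \eqref{decomp_form} for the ordering $(i_1, \dots, i_s, j_1, \dots, j_t)$, which gives the desired isomorphism. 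The main obstacle I anticipate is the bookkeeping for the iterated commutation between $\rho_1$ and the tower of $\psi^{j_l}$'s, in particular tracking how the intermediate supports $J \setminus \{j_1, \dots, j_l\}$ and $I_s \setminus \{j_1, \dots, j_l\}$ and their weight shifts align at each step; once this is set up, the naturality of \eqref{short} makes the commutation itself formal.
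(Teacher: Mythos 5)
Your proof is correct, and part 1) is essentially the paper's own argument (take $W=S\sqcup T$ and check the conditions of Proposition \ref{ext}). For part 2) you take a genuinely different route. The paper decomposes only the generator of $\Ext^{s}(\FF_{I,\pp},\FF_{J,\qq})$ into elementary $\psi$'s and one-step restrictions, and then absorbs these factors one at a time into the fixed class in $\Ext^{t}(\FF_{J,\qq},\FF_{K,\rr})$ by repeated use of Lemma \ref{comp_prim}; each absorption is an isomorphism of one-dimensional spaces by a long-exact-sequence argument, so no commutation of morphisms is ever needed. You instead decompose \emph{both} generators and rewrite the total composite, pushing $\rho_1\colon\FF_{I_s,\qq}\to\FF_{J,\qq}$ through the tower of $\psi^{j_l}$'s via the identity $\psi^{j}_{J,\cdot}\circ\rho=\rho'[1]\circ\psi^{j}_{I_s,\cdot}$. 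This is precisely the first line of \eqref{relat}, which the paper only establishes \emph{after} this proposition via the $3\times 3$ diagram; your derivation of it from naturality of the connecting morphism of \eqref{short} is valid, since $I_s\subseteq J$ and $j\in T\subseteq I_s\cap J$ guarantee that the two short exact sequences are connected by a genuine map of short exact sequences (multiplication by $z_j$ commutes with the quotient maps). The rearranged composite is then exactly the canonical decomposition of Lemma \ref{decomp} for the set $S\sqcup T$ with the ordering $(i_1,\dots,i_s,j_1,\dots,j_t)$, hence a generator of $\Ext^{s+t}(\FF_{I,\pp},\FF_{K,\rr})$. What your route buys is the explicit identification of the composite with the distinguished generator $\phi_{I,\pp;\,S\sqcup T,K}$, which the paper in any case needs later for part 3) of Theorem \ref{main1} and obtains from \eqref{relat}; what the paper's route buys is economy, since it proves the isomorphism without ever invoking commutation relations. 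Note also that your argument is sign-safe: you only commute $\rho$'s past $\psi$'s and never permute two $\psi$'s, so the anticommutation sign in the second line of \eqref{relat} does not enter.
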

\begin{proof}
The spaces
$\Ext^{s}(\FF_{I,\pp},\FF_{J,\qq}), \Ext^t(\FF_{J,\qq},\FF_{K,\rr})$
are not trivial for $s=|S|$ and $t=|T|$ by Proposition \ref{ext}.
Under assumption $T\subseteq I\cap J$ and $S\cap T=\emptyset$ the set $W:=S\sqcup T$ is the subset of $I$
that contains $I\backslash (I\cap K).$
Since $\rr-\pp=\cchi_{W},$ again by Proposition \ref{ext} the space $\Ext^{s+t}(\FF_{I,\pp},\FF_{K,\rr})$
is not trivial.

By Lemma \ref{decomp} the space $\Ext^{s}(\FF_{I,\pp},\FF_{J,\qq})$ can be decomposed as in (\ref{decomp_form})
\[
\Hom(\FF_{I_s,\pp_s},\FF_{J,\qq})\otimes \Ext^1(\FF_{I_{s-1},\pp_{s-1}},\FF_{I_s,\pp_s})\otimes\cdots\otimes \Ext^1(\FF_{I,\pp},\FF_{I_1,\pp_1})\stackrel{\sim}{\lto} \Ext^s(\FF_{I,\pp},\FF_{J,\qq}).
\]
The space $\Hom(\FF_{I_s,\pp_s},\FF_{J,\qq})$ can also be decomposed as
\[
\Hom(\FF_{I_s,\pp_s},\FF_{J,\qq})\cong \Hom(\FF_{J_{t-1},\qq}, \FF_{J,\qq})\otimes\cdots\otimes \Hom(\FF_{I_s,\pp_s},\FF_{J_{1},\qq}),
\]
where $I_s=J_0\subset J_1\subset\ldots\subset J_{t-1}\subset J_t=J$ is a sequence of subsets such that $|J_l\backslash J_{l-1}|=1$ for all $0< l\le t.$
Now we can apply Lemma \ref{comp_prim} part 1) to argue that the composition
\[
\Ext^{t}(\FF_{J_l,\qq},\FF_{K,\rr})\otimes \Hom(\FF_{J_{l-1},\qq},\FF_{J_l,\qq})\stackrel{\sim}{\lto} \Ext^{t}(\FF_{J_{l-1},\qq},\FF_{K,\rr})
\]
is an isomorphism for all $0< l\le t.$ And by the same Lemma \ref{comp_prim} part 2) we have isomorphisms
\[
\Ext^{t+s-k}(\FF_{I_k,\pp_k},\FF_{K,\rr})\otimes \Ext^1(\FF_{I_{k-1},\pp_{k-1}},\FF_{I_k,\pp_k})\stackrel{\sim}{\lto} \Ext^{t+s-k+1}(\FF_{I_{k-1},\pp_{k-1}},\FF_{K,\rr})
\]
for all $0<k\le s.$ This completes the proof of  the proposition.
\end{proof}

Finally, we describe relations between morphisms.
Let now $J=I\backslash\{i_1, i_2\},$ where $i_1, i_2\in I.$
We put $I_r=I\backslash\{i_r\}$ and $\pp_r=\pp+\ee_{i_r}$ for $r=1,2$ and define $\qq=\pp+\ee_{i_1}+\ee_{i_2}.$
Consider the following diagram of  morphisms in the derived category

\begin{align*}
\xymatrix{
\FF_{J, \qq} \ar^{\cdot z_{i_1}}[r]\ar_{\cdot z_{i_2}}[d] & \FF_{J, \pp_2} \ar^{\rho_{J, \pp_2}^{i_1}}[r]\ar_{\cdot z_{i_2}}[d] & \FF_{I_2, \pp_2}\ar^{\psi_{I_2, \pp_2}^{i_1}}[r]\ar_{\cdot z_{i_2}}[d]
&\FF_{J, \qq}[1]\ar^{\cdot z_{i_2}[1]}[d] \\
\FF_{J, \pp_1} \ar^{\cdot z_{i_1}}[r]\ar_{\rho_{J, \pp_1}^{i_2}}[d] & \FF_{J, \pp} \ar^{\rho_{J, \pp}^{i_1}}[r]\ar_{\rho_{J, \pp}^{i_2}}[d] & \FF_{I_2, \pp}\ar^{\psi_{I_2, \pp}^{i_1}}[r]\ar_{\rho_{I_2, \pp}^{i_2}}[d]
&\FF_{J, \pp_1}[1]\ar^{\rho_{J, \pp_1}^{i_2}[1]}[d] \\
\FF_{I_1, \pp_1} \ar^{\cdot z_{i_1}}[r]\ar_{\psi_{I_1, \pp_1}^{i_2}}[d]& \FF_{I_1, \pp} \ar^{\rho_{I_1, \pp}^{i_1}}[r]\ar_{\psi_{I_1, \pp}^{i_2}}[d]& \FF_{I, \pp}\ar^{\psi_{I, \pp}^{i_1}}[r] \ar_{\psi_{I, \pp}^{i_2}}[d]\ar@{}[dr]|{-} &\FF_{I_1, \pp_1}[1]\ar^{\psi_{I_1, \pp_1}^{i_2}[1]}[d]\\
\FF_{J, \qq}[1] \ar_{\cdot z_{i_1}[1]}[r] & \FF_{J, \pp_2}[1] \ar_{\rho_{J, \pp}^{i_1}[1]}[r] & \FF_{I_2, \pp_2}[1]\ar_{\psi_{I_2, \pp_2}^{i_1}[1]}[r]
&\FF_{J, \qq}[2]
}
\end{align*}

\bigskip
\noindent
in which all rows and columns are exact triangles. It follows from the properties of a triangulated category
that all squares in this diagram commute except the one marked $"-"$ which anticommutes (see, e.g., \cite{BBD}, Proposition 1.1.11).
This implies the following equality of morphisms:

\begin{equation}\label{relat}
\begin{array}{ll}
1)& \psi_{I, \pp}^{i_1}\circ \rho_{I_2, \pp}^{i_2}=\rho_{J, \pp_1}^{i_2}[1]\circ\psi_{I_2, \pp}^{i_1},
\qquad
\psi_{I, \pp}^{i_2}\circ \rho_{I_1, \pp}^{i_1}=\rho_{J, \pp_2}^{i_1}[1]\circ\psi_{I_1, \pp}^{i_2},
\\
2)& \psi_{I_1, \pp_1}^{i_2}[1]\circ\psi_{I, \pp}^{i_1}= -\psi_{I_2, \pp_2}^{i_1}[1]\circ  \psi_{I, \pp}^{i_2}.
\end{array}
\end{equation}

\medskip
Consider now a nontrivial space $\Ext^{s}(\FF_{I,\pp},\FF_{J,\qq})\cong \kk,$ where  $\qq=\pp+\cchi_{S}$ for a subset $S\subseteq I$
containing $I\backslash (I\cap J).$
Relations (\ref{relat}) and Lemma \ref{decomp} imply that there is  a fixed (up to sign) element of the space $\Ext^{s}(\FF_{I,\pp},\FF_{J,\qq})\cong \kk$
that is a composition of morphisms $\rho_{I, \pp}^{j}$ of degree $0$ and elements
of the form  $\psi_{I, \pp}^{i}$ of degree $1.$
Moreover, we can choose signs such that all morphisms will commute.

\begin{definition}\label{phi} Let $I_1=I\cup \{i\}.$ We  define
$\phi_{I, \pp}^{i}\in \Ext^1(\FF_{I, \pp}, \FF_{I_1, \pp_1})$ as $(-1)^{\varepsilon}\psi_{I, \pp}^{i},$ where $\varepsilon=\mathop{\sum}\limits_{t=1}^{i-1} \pp(t).$
\end{definition}

Now the second line of (\ref{relat}) turns to
\begin{align}\label{realcom}
\phi_{I_1, \pp_1}^{i_2}[1]\circ\phi_{I, \pp}^{i_1}= \phi_{I_2, \pp_2}^{i_1}[1]\circ  \phi_{I, \pp}^{i_2}.
\end{align}

For any function $\pp: [n]\to\Zz$ define the support $\supp\pp$ as a subset of $[n]$ consisting of all $i,$
for which $\pp(i)\ne 0.$ Summarizing the above results we obtain the following description of 
the full exceptional collection.

\begin{theorem}\label{main1}
The derived category $\bD^b(\Coh[\Aa^n/\bT])$ possesses a strong full exceptional collection $\big\{{\FF}_{I, \pp}[\bp]\big\},$ which can be described as follows.
\begin{itemize}
\item[1)] Objects are  shifted sheaves of the form ${\FF}_{I, \pp}[\bp],$ where  $\pp:[n]\to\Zz$ is any function, the subset $I\subseteq[n]$ contains $\supp\pp,$ and
$\bp=\sum_{i=1}^n \pp(i).$ The ordering is lexicographic in $\pp$ and $|I|;$ 
\item[2)] The space of morphisms $\Hom_{[\Aa^n/\bT]}({\FF}_{I, \pp}[\bp],  {\FF}_{J, \qq}[\bq])$ is nontrivial only if $(\qq - \pp)=\cchi_{S}$
for some subset $S\subseteq I$ that contains $I\backslash (I\cap J).$  In this case the space is isomorphic to $\kk,$ and there is
a fixed nontrivial element
\[
\phi_{I, \pp; S, J}\in \Hom_{[\Aa^n/\bT]}({\FF}_{I, \pp}[\bp],  {\FF}_{J, \qq}[\bq]).
\]
\item[3)] The composition $\phi_{J, \qq; T, K}\circ\phi_{I, \pp; S, J}$ is nontrivial only if $\qq=\pp+\cchi_{S},$ $T\subseteq I\cap J$ and $S\cap T=\emptyset.$
In this case, there is an equality
\[
\phi_{J, \qq; T, K}\circ\phi_{I, \pp; S, J}=\phi_{I, \pp; S\sqcup T, K}.
\]
\end{itemize}
\end{theorem}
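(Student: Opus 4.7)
The plan is to assemble the preceding results in this section. For part (1), I would invoke Proposition \ref{cn}, which already shows that the sheaves $\FF_{I,\pp}$ with $\supp\pp\subseteq I$ form a full exceptional collection under lexicographic ordering; to see that the shifted collection $\{\FF_{I,\pp}[\bp]\}$ is \emph{strong}, I would argue that any nontrivial $\Ext^s(\FF_{I,\pp},\FF_{J,\qq})$ forces $(\qq-\pp)|_{I\cup J}=\cchi_S$ with $|S|=s$ by Proposition \ref{ext}. Since $\pp$ and $\qq$ are supported in $I$ and $J$ respectively, $\qq-\pp$ is in fact supported in $I\cup J$, so $s=\bq-\bp$. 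After shifting by $[\bp]$ and $[\bq]$ all nontrivial $\Ext$'s concentrate in degree zero, which is exactly the strong condition.

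For part (2), the characterization of when $\Hom$ is nontrivial is just a restatement of Proposition \ref{ext} under the support constraints on $\pp$ and $\qq$. The real content is specifying a canonical generator $\phi_{I,\pp;S,J}$. My plan is to use Lemma \ref{decomp}: fix the natural (say, lexicographic) ordering $i_1<\dots<i_s$ of the elements of $S$, and declare $\phi_{I,\pp;S,J}$ to be the corresponding iterated composition of the degree-$1$ morphisms $\phi_{I_k,\pp_k}^{i_{k+1}}$ of Definition \ref{phi}, followed by the terminal restriction to $\FF_{J,\qq}$ (which exists because $I_s\subseteq J$). For well-definedness one must check independence of the chosen ordering, which reduces via transpositions of adjacent indices to a single swap of consecutive elements. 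This single swap is exactly handled by equation \eqref{realcom}; indeed, the sign convention of Definition \ref{phi} was designed precisely to convert the anticommutativity of \eqref{relat}(2) into honest commutativity.

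For part (3), the necessity of the conditions $T\subseteq I\cap J$ and $S\cap T=\emptyset$ was already observed in the discussion preceding Proposition \ref{compos}, and Proposition \ref{compos} then establishes that the composition is at least a nonzero scalar multiple of $\phi_{I,\pp;S\sqcup T,K}$. The remaining task, which I expect to be the main obstacle, is to pin down this scalar as exactly $1$. My approach is to expand both sides via Lemma \ref{decomp} as iterated compositions of primitive $\phi_\bullet^i$'s and restriction morphisms $\rho_\bullet^j$'s: the left-hand side naturally factors as a string indexed by $S$ followed by a string indexed by $T$, while the canonical generator $\phi_{I,\pp;S\sqcup T,K}$ is defined using an ordering that interleaves $S$ and $T$ lexicographically. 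Reconciling these requires repeatedly applying \eqref{realcom} to transpose adjacent indices coming from the two different subsets, together with the commutation relations \eqref{relat}(1) to shuffle $\rho$'s past $\phi$'s. Because the signs in Definition \ref{phi} were tuned exactly to cancel the sign appearing in \eqref{relat}(2), every such swap is an equality rather than a sign-change, so after finitely many such moves one arrives at $\phi_{I,\pp;S\sqcup T,K}$ with coefficient exactly $1$, completing the proof.
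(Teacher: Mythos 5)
Your proposal is correct and follows essentially the same route as the paper's proof: Proposition \ref{cn} and Proposition \ref{ext} for parts (1)--(2), Lemma \ref{decomp} together with the sign-corrected relation \eqref{realcom} for the well-definedness of the distinguished generators, and Proposition \ref{compos} combined with \eqref{relat}(1) and \eqref{realcom} for part (3). The extra detail you supply (the degree count $s=\bq-\bp$ for strongness, and the reduction of ordering-independence to adjacent transpositions) is exactly what the paper leaves implicit.
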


\begin{proof}
It was proved in Proposition \ref{cn} that the set of sheaves $\big\{{\FF}_{I, \pp}[\bp]\big\},$ where $\pp:[n]\to\Zz$ is any function and the subset $I$ contains $\supp\pp,$ forms an
exceptional collection. Proposition \ref{ext} implies that this collection is strong when sheaves ${\FF}_{I, \pp}$ are shifted with $\bp=\sum_{i=1}^n \pp(i).$
From the same Proposition \ref{ext} we know that the space $\Hom_{[\Aa^n/\bT]}({\FF}_{I, \pp}[\bp],\;  {\FF}_{J, \qq}[\bq])$ is nontrivial only when $(\qq - \pp)=\cchi_{S}$
for some subset $S\subseteq I$ containing $I\backslash (I\cap J).$ In this case the space is one-dimensional.

Using  notations and a decomposition formula from Lemma \ref{decomp} we can define $\phi_{I, \pp; S, J}$ as a composition of elements
\[
\phi_{I_{k-1}, \pp_1}^{i_{k}}\in
\Ext^1(\FF_{I_{k-1},\pp_{k-1}},\; \FF_{I_k,\pp_k})
\]
 for $0<k\le s$ and the restriction map
$\rho_{I_s, \qq; J}\in \Hom(\FF_{I_s,\qq}, \;\FF_{J,\qq}).$ The definition of $\phi_{I, \pp; S, J}$ does not depend on the ordering of elements $(i_1,\ldots, i_s)$ of $S$
due to commutation relations (\ref{realcom}). Moreover, by Lemma \ref{decomp} all such elements are nontrivial.

Finally, by Proposition \ref{compos} the composition $\phi_{J, \qq; T, K}\circ\phi_{I, \pp; S, J}$ is nontrivial when $\qq=\pp+\cchi_{S},$ $T\subseteq I\cap J$ and $S\cap T=\emptyset.$
Moreover, commutation relations 1) of (\ref{relat}) and (\ref{realcom}) imply that  $\phi_{J, \qq; T, K}\circ\phi_{I, \pp; S, J}=\phi_{I, \pp; S\sqcup T, K}$ whenever
this composition is defined and nontrivial.
\end{proof}

\begin{remark}
{\rm We illustrate the case $n=2$ (without the cohomological shifts) in the diagram
\eqref{diagramprimitive}.
Specifically, we write explicitly the set $I$ and the grading $\pp=(\pp(1),\pp(2))$ for the elements $\FF_{I,\pp}$ of the full exceptional collection of Proposition \ref{cn} and only write the indecomposable morphisms. As in the diagram \eqref{diagram1}, we indicate $\Hom$ spaces by a solid arrow and $\Ext^1$ spaces by a dashed arrow.
}
\end{remark}

\newpage

\begin{align}\label{diagramprimitive}
\xymatrix{
&\textcolor{blue}{}
&&&
\textcolor{blue}{}
&
\textcolor{blue}{}
&
\textcolor{blue}{}
&&&
\textcolor{blue}{}
&
\\
\textcolor{blue}{}
\ar@[blue]@{-->}[r]
&
*+[F]\txt{\tiny \{1,2\}\\ \tiny (-2,2)}\ar@[blue]@{-->}[rrr]\ar@[blue]@{-->}[u]
&&&
*+[F]\txt{\tiny \{1,2\}\\ \tiny (-1,2)}\ar@[blue]@{-->}[r]\ar@[blue]@{-->}[u]
&
*++[o][F]\txt{\tiny \{2\}\\ \tiny (0,2)}\ar@[red][r] \ar@[blue]@{-->}[u]
&
*+[F]\txt{\tiny \{1,2\}\\ \tiny (0,2)}\ar@[blue]@{-->}[rrr] \ar@[blue]@{-->}[u]
&&&
*+[F]\txt{\tiny \{1,2\}\\ \tiny (1,2)}\ar@[blue]@{-->}[r] \ar@[blue]@{-->}[u]
&
\textcolor{blue}{}
\\
\\
\\
\textcolor{blue}{}
\ar@[blue]@{-->}[r]
&
*+[F]\txt{\tiny \{1,2\}\\ \tiny (-2,1)}\ar@[blue]@{-->}[rrr]\ar@[blue]@{-->}[uuu]
&&&
*+[F]\txt{\tiny \{1,2\}\\ \tiny (-1,1)} \ar@[blue]@{-->}[r]
\ar@[blue]@{-->}[uuu]
&
*++[o][F]\txt{\tiny \{2\}\\ \tiny (0,1)}\ar@[red][r]
\ar@[blue]@{-->}[uuu]
&
*+[F]\txt{\tiny \{1,2\}\\ \tiny (0,1)}\ar@[blue]@{-->}[rrr]
\ar@[blue]@{-->}[uuu]
&&&
*+[F]\txt{\tiny \{1,2\}\\ \tiny (1,1)}\ar@[blue]@{-->}[r]
\ar@[blue]@{-->}[uuu]
&
\textcolor{blue}{}
\\
\\
\\
\textcolor{blue}{}
\ar@[blue]@{-->}[r]
&
*+[F]\txt{\tiny \{1,2\}\\ \tiny (-2,0)}\ar@[blue]@{-->}[rrr]\ar@[blue]@{-->}[uuu]
&&&
*+[F]\txt{\tiny \{1,2\}\\ \tiny (-1,0)} \ar@[blue]@{-->}[r]\ar@[blue]@{-->}[uuu]
&
*++[o][F]\txt{\tiny \{2\}\\ \tiny (0,0)}\ar@[red][r]\ar@[blue]@{-->}[uuu]
&
*+[F]\txt{\tiny \{1,2\}\\ \tiny (0,0)}\ar@[blue]@{-->}[rrr]\ar@[blue]@{-->}[uuu]
&&&
*+[F]\txt{\tiny \{1,2\}\\ \tiny (1,0)}\ar@[blue]@{-->}[r]\ar@[blue]@{-->}[uuu]
&
\textcolor{blue}{}
\\
\textcolor{blue}{}
\ar@[blue]@{-->}[r]
&
*++[o][F]\txt{\tiny \{1\}\\ \tiny (-2,0)}\ar@[blue]@{-->}[rrr]\ar@[red][u]
&&&
*++[o][F]\txt{\tiny \{1\}\\ \tiny (-1,0)} \ar@[blue]@{-->}[r]\ar@[red][u]
&
*+++[o][F=]\txt{$\emptyset$\\ \footnotesize  (0,0)}\ar@[red][r]\ar@[red][u]
&
*++[o][F]\txt{\tiny \{1\}\\ \tiny (0,0)}\ar@[blue]@{-->}[rrr]\ar@[red][u]
&&&
*++[o][F]\txt{\tiny \{1\}\\ \tiny (1,0)}\ar@[blue]@{-->}[r]\ar@[red][u]
&
\textcolor{blue}{}
\\
\textcolor{blue}{}
\ar@[blue]@{-->}[r]
&
*+[F]\txt{\tiny \{1,2\}\\ \tiny (-2,-1)}\ar@[blue]@{-->}[rrr]\ar@[blue]@{-->}[u]
&&&
*+[F]\txt{\tiny \{1,2\}\\ \tiny (-1,-1)} \ar@[blue]@{-->}[r]\ar@[blue]@{-->}[u]
&
*++[o][F]\txt{\tiny \{2\}\\ \tiny (0,-1)} \ar@[red][r]\ar@[blue]@{-->}[u]
&
*+[F]\txt{\tiny \{1,2\}\\ \tiny (0,-1)} \ar@[blue]@{-->}[rrr]\ar@[blue]@{-->}[u]
&&&
*+[F]\txt{\tiny \{1,2\}\\ \tiny (1,-1)}\ar@[blue]@{-->}[r]\ar@[blue]@{-->}[u]
&
\textcolor{blue}{}
\\
\\
\\
\textcolor{blue}{}
\ar@[blue]@{-->}[r]
&
*+[F]\txt{\tiny \{1,2\}\\ \tiny (-2,-2)}\ar@[blue]@{-->}[rrr]\ar@[blue]@{-->}[uuu]
&&&
*+[F]\txt{\tiny \{1,2\}\\ \tiny (-1,-2)} \ar@[blue]@{-->}[r]\ar@[blue]@{-->}[uuu]
&
*++[o][F]\txt{\tiny \{2\}\\ \tiny (0,-2)}\ar@[red][r]\ar@[blue]@{-->}[uuu]
&
*+[F]\txt{\tiny \{1,2\}\\ \tiny (0,-2)} \ar@[blue]@{-->}[rrr]\ar@[blue]@{-->}[uuu]
&&&
*+[F]\txt{\tiny \{1,2\}\\ \tiny (1,-2)} \ar@[blue]@{-->}[r]\ar@[blue]@{-->}[uuu]
&
\textcolor{blue}{}
\\
&\textcolor{blue}{}\ar@[blue]@{-->}[u]
&&&
\textcolor{blue}{}\ar@[blue]@{-->}[u]
&
\textcolor{blue}{}\ar@[blue]@{-->}[u]
&
\textcolor{blue}{}\ar@[blue]@{-->}[u]
&&&
\textcolor{blue}{}\ar@[blue]@{-->}[u]
&
}
\end{align}

Finally, we can reformulate the results of Theorem \ref{main1} by a certain reindexing of the elements of the collection. 
\begin{theorem}\label{main1.5}
The derived category $\bD^b(\Coh[\Aa^n/\bT])$ of $\bT$\!--equivariant coherent sheaves on $\Aa^n$ possesses a strong full exceptional collection of objects $E_{a}$ indexed by $a=(a_1,\ldots,a_n)\in\Zz^n$
ordered lexicographically.
The morphism spaces are described as follows.
\[
\Hom(E_a,E_b)=
\left\{
\begin{array}{ll}
\kk,&{\rm if~ for~all~}i {\rm~either~} b_i-a_i\in \{0,1\} {\rm ~or~} (b_i,a_i) = (1,-1),\\
0,&{\rm otherwise.}
\end{array}
\right.
\]
The nontrivial spaces $\Hom(E_a,E_b)$ come equipped with a distinguished generator $\rho_{a,b}.$
The composition of morphisms $\Hom(E_a,E_b)\otimes \Hom(E_b,E_c)\to \Hom(E_a,E_c)$ is nontrivial if and only if all three spaces are nontrivial. In these cases $\rho_{b,c}\circ\rho_{a,b}=\rho_{a,c}.$
\end{theorem}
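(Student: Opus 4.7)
The plan is to deduce Theorem~\ref{main1.5} from Theorem~\ref{main1} by a reindexing of the exceptional collection. The first step is to construct a bijection between the index set of Theorem~\ref{main1}, namely pairs $(I,\pp)$ with $I\subseteq [n]$ and $\supp\pp\subseteq I$, and vectors $a=(a_1,\dots,a_n)\in\Zz^n$. Given $(I,\pp)$, set $a_i=0$ when $i\notin I$ (which forces $\pp(i)=0$ anyway), $a_i=\pp(i)$ when $i\in I$ and $\pp(i)<0$, and $a_i=\pp(i)+1$ when $i\in I$ and $\pp(i)\ge 0$. The inverse sends $a$ to $I=\{i:a_i\ne 0\}$ and $\pp(i)=a_i$ for $a_i<0$, $\pp(i)=a_i-1$ for $a_i>0$, $\pp(i)=0$ for $a_i=0$. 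With this in hand, define $E_a:=\FF_{I,\pp}[\bp]$ where $(I,\pp)$ corresponds to $a$ and $\bp=\sum_i\pp(i)$.

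The second step is to translate the $\Hom$-description of Theorem~\ref{main1} through this bijection. By strongness of the collection, $\Hom(E_a,E_b)=\Ext^{\bq-\bp}(\FF_{I,\pp},\FF_{J,\qq})$ is nontrivial exactly when $\qq-\pp=\cchi_S$ for some $S$ with $I\setminus J\subseteq S\subseteq I$ (and then $|S|=\bq-\bp$ automatically). I would perform a coordinate-wise case analysis according to the signs of $a_i$ and $b_i$ to show this matches the condition that for all $i$, either $b_i-a_i\in\{0,1\}$ or $(b_i,a_i)=(1,-1)$. The exceptional pair $(a_i,b_i)=(-1,1)$ is accounted for because $\pp(i)=-1$ and $\qq(i)=0$ force $i\in S$. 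The two spurious cases $(a_i,b_i)=(0,2)$ and $(a_i,b_i)=(1,0)$, which would pass a na\"ive $\qq(i)-\pp(i)$ test, are ruled out respectively by the constraints $S\subseteq I$ (as $a_i=0$ means $i\notin I$) and $I\setminus J\subseteq S$ (as $a_i=1$, $b_i=0$ means $i\in I\setminus J$ but $i\notin S$). All other coordinatewise pairs match directly.

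The third step is to verify the composition rule and the ordering. Setting $\rho_{a,b}$ equal to the distinguished generator $\phi_{I,\pp;S,J}$ from Theorem~\ref{main1}, Proposition~\ref{compos} together with the identity $\phi_{J,\qq;T,K}\circ\phi_{I,\pp;S,J}=\phi_{I,\pp;S\sqcup T,K}$ gives directly that composition is nontrivial exactly when all three $\Hom$-spaces are nontrivial, with value $\rho_{a,c}$. Finally, lexicographic order on $\Zz^n$ is compatible with the exceptional collection structure because any nontrivial $\Hom(E_a,E_b)$ forces $b_i\ge a_i$ for every $i$ (all allowed differences are nonnegative), whence $a\le b$ lexicographically, with equality iff $a=b$.

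The main obstacle is the coordinate-by-coordinate bookkeeping in the second step: the encoding $a_i\mapsto\pp(i)$ is two-to-one at $\pp(i)=0$, and one has to verify that the auxiliary $I$-constraints of Theorem~\ref{main1} precisely resolve this ambiguity so as to produce the clean asymmetric condition on $(b_i,a_i)$ appearing in Theorem~\ref{main1.5}. Once this matching is established, the remaining content is a formal transcription of Theorem~\ref{main1} and Proposition~\ref{compos}.
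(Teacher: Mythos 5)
Your proposal is correct and follows essentially the same route as the paper: the identical reindexing $\FF_{I,\pp}[\bp]=E_a$ (with the same two-case rule and its inverse), the same coordinate-by-coordinate translation of the condition $\qq-\pp=\cchi_S$, $S\subseteq I\subseteq S\cup J$ into the stated condition on $(b_i,a_i)$, and the same appeal to Theorem \ref{main1} and Proposition \ref{compos} for the generators, compositions, and ordering. The subtleties you flag (the pair $(b_i,a_i)=(1,-1)$ and the exclusion of $(2,0)$ and $(0,1)$ via $S\subseteq I$ and $I\setminus(I\cap J)\subseteq S$) are exactly the points the paper's proof checks.
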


\begin{proof}
This  is a reformulation of Theorem \ref{main1} obtained by reindexing
\[
\FF_{I,\pp}[\bp] =: E_{a}
\]
for $a=(a_1,\ldots,a_n)$ with
\begin{align}\label{rules}
a_i =\left\{
\begin{array}{ll}
\pp(i),&{\rm if~} \pp(i)< 0 {\rm~or~} i\not\in I,\\
\pp(i)+1,&{\rm if~}\pp(i)\geq 0 {\rm~and~} i\in I.
\end{array}
\right.
\end{align}
In the opposite direction, given $a=(a_1,\ldots,a_n)$ we can recover 
$I$ and $\pp$ by 
\begin{align}\label{rulesback}
I=\{i,a_i\neq 0\};
~~~~~
\pp(i) = \left\{
\begin{array}{ll}
a_i,&{\rm if~}a_i\leq 0,\\
a_i-1,&{\rm if~} a_i> 0 .
\end{array}
\right.
\end{align}
This bijection relies crucially on the property $\pp(i)=0$ for all $i\not\in I$ of Theorem \ref{main1}.

We will now show that the non-triviality condition of Theorem \ref{main1} translates into the 
condition of the present theorem.

For $(I,\pp)$ and $(J,\qq)$ from Theorem \ref{main1} with nontrivial  $\Hom_{[\Aa^n/\bT]}({\FF}_{I, \pp}[\bp],  {\FF}_{J, \qq}[\bq])$ we have 
$\qq-\pp=\chi_S$ with $S\subseteq I \subseteq S\cup J.$ 
Consider $a$ and $b$ in $\Zz^n$ that correspond to $(I,\pp)$ and $(J,\qq)$ respectively.
For $i\not\in S$ we have $\pp(i)=\qq(i).$ Moreover, either $i$ lies in both $I$ and $J,$ or in none of $I$ and $J,$ or only in $J.$ In the first two cases $a_i=b_i$ since the rules \eqref{rules} apply the same way to $\pp(i)$ and $\qq(i).$ If $i$ lies only in $J,$ then it is possible that 
the second line of \eqref{rules} applies to $\qq(i)$ while the the first line applies to $\pp(i)$ which leads to $b_i=a_i+1,$ else $b_i=a_i.$
For $i\in S$ we have $\qq(i)=\pp(i)+1.$ The only possible way that the second line would apply to $\qq(i)$ while the first line would apply to $\pp(i)$ is when $\pp(i)=-1$ and $\qq(i)=0;$ this leads to $(b_i,a_i)=(1,-1).$ Otherwise, we have $b_i-a_i\in\{0,1\}.$

In the other direction, suppose $a$ and $b$ satisfy the property that for all $i$ either 
$b_i-a_i\in \{0,1\}$ or $(b_i,a_i)=(1,-1).$  Consider the corresponding $(I,\pp)$ and $(J,\pp)$ obtained by the rules \eqref{rulesback}.
For a given $i$ there are several possibilities and we will build the set $S$ with $\qq-\pp=\chi_S$ accordingly. If $a_i=b_i\neq 0,$ then $i\in I\cap J$ and $\pp(i)=\qq(i).$ If $a_i=b_i=0,$ then $i\not\in I\cup J$ and $\pp(i)=\qq(i)=0.$ If $a_i=b_i-1$ and $a_i\neq 0,$ then the same lines of \eqref{rulesback} apply to both $a$ and $b.$ 
We get $\qq(i)-\pp(i)=1$ and include this $i$ in $S.$ If $a_i=0$ and $b_i=1,$ then we get $i\in J,$ $i\not\in I,$ and $\qq(i)-\pp(i)=0-0=0.$ In this case $i\not\in S.$ Finally, if $a_i=-1$ and $b_i=1,$ we get $i\in I\cap J$ and $\qq(i)-\pp(i)=0-(-1)=1$ and $i\in S.$ By construction, $S\subseteq I$ since $i\in I$ for all cases where $i\in S.$ Similarly, if $i\in I,$ then  $i\in J$ unless $b_i=0.$ This means $a_i=-1$ which leads to $i\in S.$ Thus $I\subseteq S\cup J$ and we see that there is a nontrivial $\Hom$ space between $\FF_{I,\pp}[\bp]$ and $\FF_{J,\qq}[\bq].$

The statements about generators and composition of morphisms now follow immediately from Theorem \ref{main1}. The (non-unique) ordering is derived from the description of the $\Hom(E_a,E_b)$ spaces.
\end{proof}

\section{
Exceptional collection of
\texorpdfstring{$\Gm^n$\!}{Gmn}\!--equivariant
sheaves on
\texorpdfstring{$\Aa^n$}{An}
without coordinate subspaces
}

\label{sec3}

In this section we will construct an analog of the exceptional collection of Theorem \ref{cn} for an arbitrary nonempty
open subset $U_\Sigma$ of $\Aa^n$ which is
invariant with respect to the torus $\bT=\Gm^n.$

\smallskip
Specifically,
let $\Sigma$ be a simplicial complex on the set $[n]:=\{1,\ldots, n\},$ i.e. an inclusion closed nonempty collection of subsets of this set.
We associate to $\Sigma$ an open subset $U_\Sigma\subseteq \Aa^n$ as follows. We view a (closed)  point
 $(z_1,\ldots,z_n)\in \Aa^n$ as a map ${\zz}:[n]\to \kk$ with $\zz(i)=z_i.$ Then
$$
U_\Sigma:= \{{\zz}, \mbox{ such that }{\zz}^{-1}(0)\in \Sigma\}= \bigcup_{\sigma\in \Sigma}U_\sigma,
$$
where $U_\sigma\subset \Aa^n$ is the open subset of all $(z_1,\ldots,z_n)$ such that $z_i\neq 0$ for all $i\not\in \sigma.$
Equivalently, a point with coordinates $(z_1,\ldots,z_n)$ lies in $U_\Sigma$ if and only if there exists $\sigma\in\Sigma$
such that $z_i\neq 0$ for all $i\not\in\sigma.$ It is easy to see that all open $\bT$\!--invariant subsets of $\Aa^n$
can be uniquely written in the form $U_\Sigma.$

As in Definition \ref{basic_weights}, we  introduce the weights
$\ee_i:[n]\to \Zz$ that correspond to the variables $z_i,$ i.e.
we have
$\ee_i(j) = \delta_{ij}.$
For any subset $S\subseteq [n]$
we also introduce the characteristic function  $\cchi_S= \sum_{k\in S} \ee_k.$

For each subset $I\subseteq [n]$ and a grading $\pp:[n]\to \Zz$ we defined  $\bT$\!--equivariant coherent sheaves  ${\FF}_{I, \pp}$ on
$\Aa^n$ which are associated with
the $\Zz^{n}$\!--graded modules
\begin{equation}\label{fip}
M_{I, \pp}:=\kk\left[z_1,\ldots, z_n\right]/\la z_i,i\in I\ra (-\pp).
\end{equation}
over the polynomial ring $\kk[z_1,\ldots,z_n]$ (see Definition \ref{sheaves}).
The natural open embedding $\js: U_{\Sigma}\hookrightarrow \Aa^n$ gives an inverse image functor
$\js^{*}:\Coh^{\bT}\!\Aa^n \to \Coh^{\bT}\!U_{\Sigma}$ that is exact. The functor $\js^*$ induces
the functor
between derived categories of equivariant coherent sheaves for which we will use the same notation.

\begin{definition}
For each $I\subseteq [n]$ and $\pp:[n]\to \Zz$ define the $\bT$\!--equivariant coherent sheaf ${\FF}^\Sigma_{I, \pp}$ on $U_\Sigma$ as
the restriction of the $\bT$\!--equivariant sheaf
${\FF}_{I, \pp}$ on $\Aa^n$ associated to \eqref{fip} to the open subset $U_\Sigma,$ i.e. ${\FF}^\Sigma_{I, \pp}= \js^*{\FF}_{I, \pp}.$
\end{definition}

\begin{remark}\label{rem3.2}
{\rm
If $I\not\in\Sigma,$ then ${\FF}_{I, \pp}$ is supported on the complement of $U_\Sigma,$ so  ${\FF}^\Sigma_{I, \pp}=0.$
}
\end{remark}

At first, we calculate cohomology of sheaves  ${\FF}^\Sigma_{I, \pp}$ for special weights $\pp.$ More precisely, we have the following vanishing
conditions.

\begin{lemma}\label{cohom}
Let $I\subseteq [n]$ and let $\pp:[n]\to \Zz$ be a function such that
$\pp(i)=0$ for all $i\not\in I.$ Then $H^s([U_\Sigma/\bT], {\FF}^\Sigma_{I, \pp})=0$ for all $s$
unless  the following conditions hold:
$I\in\Sigma,$ $\pp=0$ and $s=0.$
In this case we have
$H^0([U_\Sigma/\bT], {\FF}^\Sigma_{I, \00})\cong \kk.$
Moreover, the natural restriction map 
$$H^0([\Aa^n/\bT], {\FF}^\Sigma_{I, \00})\to
H^0([U_\Sigma/\bT], {\FF}^\Sigma_{I, \00})
$$
is an isomorphism.
\end{lemma}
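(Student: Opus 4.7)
The plan is to compute the cohomology directly via \v{C}ech cohomology after identifying the support of $\FF^\Sigma_{I,\pp}$. If $I\not\in\Sigma$ the statement follows from Remark \ref{rem3.2}, so assume $I\in\Sigma$. Since $\FF_{I,\pp}$ is set-theoretically supported on the coordinate subspace $\{z_i=0,\, i\in I\}$, its restriction to $U_\sigma$ vanishes whenever $\sigma\not\supseteq I$, so only facets of $\Sigma$ that contain $I$ will contribute.

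I would use the affine \v{C}ech cover $\{U_\sigma\}_{\sigma\in\Sigma_{\max}}$ of $U_\Sigma$ and isolate its $\Zz^n$-degree $\00$ component, which computes the equivariant cohomology. The key local computation is that for $\sigma\supseteq I$ the space $\Gamma(U_\sigma,\FF_{I,\pp})_{\00}$ is spanned by the single monomial $z^{-\pp}$ viewed inside the localization of $M_{I,\pp}$ at $\{z_j : j\notin\sigma\}$. This monomial is nonzero precisely when $\pp(i)=0$ for all $i\in I$ (so the exponent of $z_i$ for $i\in I$ is zero, avoiding the defining relations $z_i=0$) and $\pp(j)\le 0$ for all $j\in\sigma\setminus I$ (so the exponent is nonnegative on the non-inverted variables). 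Under the standing hypothesis $\pp(j)=0$ for $j\not\in I$, the second condition is automatic and the first forces $\pp=\00$. Consequently, if $\pp\ne\00$ the entire degree $\00$ \v{C}ech complex vanishes and all cohomology groups are zero.

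If $\pp=\00$ then each $\Gamma(U_\sigma,\FF_{I,\pp})_{\00}$ with $\sigma\supseteq I$ equals $\kk$, so the degree $\00$ \v{C}ech complex is the simplicial cochain complex of the nerve of the cover $\{U_\sigma : \sigma\in\Sigma_{\max},\, \sigma\supseteq I\}$ of $V_I\cap U_\Sigma$. Every intersection $U_{\sigma_0}\cap\cdots\cap U_{\sigma_k}=U_{\sigma_0\cap\cdots\cap\sigma_k}$ contains the dense torus $\Gm^n$ and is hence nonempty, so this nerve is a full simplex, which is contractible. Therefore the cohomology equals $\kk$ in degree $0$ and vanishes in higher degrees. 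The restriction map sends the generator $1\in M_{I,\00}$ to the \v{C}ech cocycle $(1,\ldots,1)$, which is the generator of the right hand $\kk$, so it is an isomorphism. The only delicate point is the bookkeeping of the $\Zz^n$-grading in the local sections; once that is in place, the vanishing follows from the elementary observation that any open cover whose total intersection is nonempty has a contractible nerve.
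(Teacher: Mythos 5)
Your argument is correct and follows essentially the same route as the paper: a \v{C}ech computation on the affine torus-invariant cover, the local observation that the degree-$\00$ sections on $U_\sigma$ are $\kk$ exactly when $I\subseteq\sigma$ and $\pp=\00$ (and vanish otherwise under the standing hypothesis on $\pp$), and the identification of the resulting degree-$\00$ complex with the cochain complex of a full simplex on the faces containing $I$. Using only the maximal faces of $\Sigma$ as the cover and phrasing the last step via the nerve rather than the explicit standard complex \eqref{C} are cosmetic differences.
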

\begin{proof}
By Remark \ref{rem3.2} we only need to look at the case $I\in\Sigma.$

To calculate cohomology of the sheaf ${\FF}^\Sigma_{I, \pp}$ we will use  the ${\rm \check Cech}$ complex.
It is built from the
affine open sets $U_\sigma \subseteq U_\Sigma$ for $\sigma\in \Sigma,$ defined by
$
U_{\sigma} = \{ {\zz}\; |\; {\zz}^{-1}(0)\subseteq \sigma \}.
$
Equivalently,  $U_\sigma$ is the set of all $(z_1,\ldots,z_n)$ such that $z_i\neq 0$ for all $i\not\in \sigma.$ We have
$U_\Sigma = \bigcup_{\sigma\in \Sigma}U_\sigma$ and $U_\sigma\cap U_\tau = U_{\sigma \cap \tau}$
which allows us to define a ${\rm \check Cech}$ covering of $U_\Sigma$ with affine subsets.

The first step is to calculate the degree zero part of the space of sections of
$\FF^\Sigma_{I, \pp}$ on $U_\sigma.$ The whole space of sections is given by the module
$$
\kk[z_1,\ldots z_n]/\la z_i,i\in I\ra [z_k^{-1},k\not\in\sigma] (-\pp).
$$
As a consequence, it is only nonzero if $I\subseteq \sigma.$ If $I\subseteq \sigma,$ the degree
zero part is either $0$ or $\kk.$ Specifically, it is one-dimensional  if and only if $\pp(i) = 0$
 for all $i\in I$ and $\pp(i)\leq 0$
when $i\in \sigma\setminus I.$ However, by assumption we have $\pp(i)=0$ for all $ i\not\in I.$
Therefore,  $H^0([U_{\sigma}/\bT], {\FF}^\Sigma_{I, \pp})\ne 0$ only when $\pp= 0$ and $I\subseteq \sigma.$
This implies that
$H^s([U_\Sigma/\bT], {\FF}^\Sigma_{I, \pp})=0$ for all $s$ when $\pp\neq 0.$

Let now $\pp= 0.$
Consider the subset (not a subcomplex) $\Star(I)\subseteq \Sigma$ that consists of all elements $\sigma\in \Sigma$ that contain $I.$ When one
defines  \v{C}ech complex, one picks an ordering of the elements of $\Sigma,$ which in turn induces an ordering on $\Star(I).$ Note that we can ignore the parts of the \v{C}ech complex that do not 
come from intersections of $U_\sigma,\sigma\in \Star(I)$ because they do not contribute. From the
usual definition of \v{Cech} differential, we see that \v{C}ech differential of the cohomology of the Koszul differential can be identified
with
\begin{equation}\label{C}
0\to \bigoplus_{\sigma\in \Star(I)} \kk \to  \bigoplus_{\{\sigma_1,\sigma_2\}\subseteq\Star(I)}
\kk \to \cdots \to  \bigoplus_{\substack{J\subseteq \Star(I), \\ |J|=l\quad}}\kk \to  \bigoplus_{\substack{J\subseteq \Star(I),\\ |J|=l+1}}\kk
  \to\cdots\to  \kk \to 0
\end{equation}
with the components of the differential nonzero only when there is an inclusion of the corresponding subsets of size $l$ and $l+1.$
Given an inclusion, the map is $(\pm 1),$ depending on the parity of the location of the additional element in the induced ordering. This is a
standard complex which has only one nontrivial cohomology at the left term that is isomorphic to $\kk.$
Thus $H^s([U_\Sigma/\bT], {\FF}^\Sigma_{I, \00})=0,$ when $s\ne 0$ and $H^0([U_\Sigma/\bT], {\FF}^\Sigma_{I, \00})\cong \kk.$
\end{proof}

The inverse image functor $\js^*$ induces the natural maps
\[
\Ext_{[\Aa^n/\bT]}^s({\FF}_{I_1, \pp_1}, \FF_{I_2, \pp_2})\lto \Ext_{[U_\Sigma/\bT]}^s({\FF}^\Sigma_{I_1, \pp_1}, \FF^\Sigma_{I_2, \pp_2})
\]
for any sheaves ${\FF}_{I_1, \pp_1}, \FF_{I_2, \pp_2}$ and their restrictions on $U_{\Sigma}.$
The following proposition describes these maps in a special case which is sufficient for our purposes.

\begin{proposition}\label{calcext}
Let $I_1$ and $I_2$ be elements of $\Sigma$ and let $\pp_1, \pp_2$ be functions $[n]\to \Zz$ such that
$\pp_r(i)=0$ for all $i\not\in I_r.$ Then
\begin{itemize}
\item[a)] if $I_1\cup I_2\not\in\Sigma,$ then $\Ext_{[U_\Sigma/\bT]}^s({\FF}^\Sigma_{I_1, \pp_1}, \FF^\Sigma_{I_2, \pp_2})=0$ for all $s.$
\item[b)] if $I_1\cup I_2\in\Sigma,$ then the natural map
\[
\Ext_{[\Aa^n/\bT]}^s({\FF}_{I_1, \pp_1}, \FF_{I_2, \pp_2})
\lto
\Ext_{[U_\Sigma/\bT]}^s({\FF}^\Sigma_{I_1, \pp_1}, \FF^\Sigma_{I_2, \pp_2})
\]
is an isomorphism.
\end{itemize}
In particular, $\Ext_{[U_\Sigma/\bT]}^s({\FF}^\Sigma_{I_1, \pp_1}, \FF^\Sigma_{I_2, \pp_2})$ is nontrivial (and is isomorphic to $\kk$) iff $I_1\cup I_2\in\Sigma$ and 
$\pp_2=\pp_1+\cchi_{S}$ for some subset $S$ with $s=|S|$ and $S \subseteq I_1\subseteq S\cup I_2.$ 
\end{proposition}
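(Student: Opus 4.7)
The plan is to combine the local $\RlHom$ calculation already carried out in the proof of Proposition \ref{ext} (see Remark \ref{rem25}) with the global cohomology vanishing provided by Lemma \ref{cohom}.

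First, I would exploit the Koszul resolutions. The total Koszul complex $\kK_I(\pp)$ of \eqref{K} is a resolution of $\FF_{I,\pp}$ on $[\Aa^n/\bT]$ by $\bT$\!--equivariant locally free sheaves; because $\js\colon U_\Sigma \hookrightarrow \Aa^n$ is a flat open embedding, $\js^*\kK_I(\pp)$ remains a locally free resolution of $\FF^\Sigma_{I,\pp}$ on $[U_\Sigma/\bT]$. Consequently the sheaf $\RlHom(\FF^\Sigma_{I_1,\pp_1},\FF^\Sigma_{I_2,\pp_2})$ is computed by the complex $\js^*\bigl(\kKv_{I_1}(\pp_1)\otimes \kK_{I_2}(-\pp_2)\bigr),$ and the analysis of Proposition \ref{ext} (recorded in Remark \ref{rem25}) identifies this, up to quasi-isomorphism, with
\[
\bigoplus_{T\subseteq I_1\cap I_2} \FF^\Sigma_{I_1\cup I_2}(\mathbf{m}_T)\,[\,|T|-|I_1|\,],\qquad \mathbf{m}_T:=\pp_1-\pp_2+\cchi_{I_1}-\cchi_T,
\]
with the identification obtained by base change from $[\Aa^n/\bT]$ and therefore compatible with the natural map induced by $\js^*.$

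Next, I would pass to hypercohomology. Since $\pp_r$ vanishes outside $I_r,$ each $\mathbf{m}_T$ also vanishes outside $I_1\cup I_2,$ so Lemma \ref{cohom} applies to every summand and says that its cohomology is zero unless $I_1\cup I_2\in\Sigma$ and $\mathbf{m}_T=0,$ in which case it is one-dimensional in degree zero and the restriction map from the cohomology on $[\Aa^n/\bT]$ is an isomorphism. In case (a) the first condition fails for every $T,$ hence $\Ext^s_{[U_\Sigma/\bT]}(\FF^\Sigma_{I_1,\pp_1},\FF^\Sigma_{I_2,\pp_2})=0$ for all $s.$ In case (b) the surviving summands correspond to those $T$ with $\mathbf{m}_T=0;$ setting $S:=I_1\setminus T,$ this condition translates to $\pp_2-\pp_1=\cchi_S$ with $S\subseteq I_1\subseteq S\cup I_2,$ and the $T$-summand contributes a copy of $\kk$ to $\Ext^s$ in cohomological degree $s=|I_1|-|T|=|S|.$ This matches Proposition \ref{ext} on the nose, and the term-by-term isomorphism of Lemma \ref{cohom} shows that the natural map $\Ext^s_{[\Aa^n/\bT]}(\FF_{I_1,\pp_1},\FF_{I_2,\pp_2})\to \Ext^s_{[U_\Sigma/\bT]}(\FF^\Sigma_{I_1,\pp_1},\FF^\Sigma_{I_2,\pp_2})$ is an isomorphism.

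The main technical point to verify is that the quasi-isomorphism extracted from the proof of Proposition \ref{ext} is genuinely a statement about complexes of $\bT$\!--equivariant sheaves that commutes with the flat pullback $\js^*,$ rather than just about their graded $\Ext$ modules. This is automatic: the Koszul complex is assembled from free $\Zz^n$\!--graded $S_n$-modules, which remain free upon restriction, and the step in the proof of Proposition \ref{ext} asserting that ``all differentials in this complex are identically zero'' is a local statement that survives restriction to $U_\Sigma.$ The concluding ``In particular'' assertion of the proposition then follows by combining (a), (b), and the explicit nontriviality criterion of Proposition \ref{ext}.
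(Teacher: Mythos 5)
Your proof is correct and follows essentially the same route as the paper: you compute the local $\lExt$ sheaves on $U_\Sigma$ by restricting the Koszul resolutions (Remark \ref{rem25}), pass to global $\Ext$ via hypercohomology of $\RlHom$ (which is exactly the paper's local-to-global spectral sequence, degenerate here because the complex has zero differentials), and apply Lemma \ref{cohom} to each summand, with the same functoriality remark ensuring the map induced by $\js^*$ is the term-by-term isomorphism.
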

\begin{proof}
In the proof of Proposition \ref{ext} we showed that Koszul resolutions give isomorphisms (\ref{descrip_Ext}):
\begin{align}\label{iso1}
\Ext^r_{[\Aa^n/\bT]}(\FF_{I,\pp},\FF_{J,\qq})\cong \bigoplus_{I\backslash (I\cap J) \subseteq S\subseteq I}
H^{r-|S|} ([\Aa^n/\bT], \FF_{I\cup J}(\pp-\qq+\cchi_{S})).
\end{align}
Unfortunately, the argument of Proposition \ref{ext} is not directly applicable to the current situation,
because $U_\Sigma$ is not affine and the invertible sheaves used to resolve $\FF_{I,\pp}$ and $\FF_{J,\qq}$ are no longer projective objects. Nonetheless, there is a local-to-global spectral sequence
from $H^*([U_{\Sigma}/\bT], {\mathcal Ext}^*(\FF^{\Sigma}_{I,\pp},\FF^{\Sigma}_{J,\qq}))$ to
$\Ext_{[U_{\Sigma}/\bT]}^*(\FF^{\Sigma}_{I,\pp},\FF^{\Sigma}_{J,\qq}).$ The sheaves $\mathcal Ext^*$ can be computed using resolutions by equivariant line bundles as in Proposition \ref{ext}
 (see Remark \ref{rem25})
and the $E_2$ sheet of the spectral sequence consists of 
\[
E_2^{r-s,s} = \bigoplus_{I\backslash (I\cap J) \subseteq S\subseteq I,~|S|=s} H^{r-s} ([U_{\Sigma}/\bT], \FF^{\Sigma}_{I\cup J}(\pp-\qq+\cchi_{S})).
\]
Lemma \ref{cohom} is applicable here since $\pp-\qq+\cchi_{S}$ is zero on the complement of $I\cup J$ and it implies that only $r=s$ terms are nonzero. Therefore, the local-to-global spectral sequence degenerates
and gives isomorphisms
\begin{align}\label{iso2}
\Ext^r_{[U_{\Sigma}/\bT]}(\FF^{\Sigma}_{I,\pp},\FF^{\Sigma}_{J,\qq})\cong \bigoplus_{I\backslash (I\cap J) \subseteq S\subseteq I} H^{r-|S|} ([U_{\Sigma}/\bT], \FF^{\Sigma}_{I\cup J}(\pp-\qq+\cchi_{S})).
\end{align}

Isomorphisms (\ref{iso1}) and (\ref{iso2}) commute with the inverse image functor, 
since the former can also be obtained from the local-to-global spectral sequence which is functorial. 
Another application of Lemma \ref{cohom} now implies the proposition.
\end{proof}

\begin{proposition}\label{exc_col}
The equivariant sheaves $\FF^\Sigma_{I,\pp},$ where $I$ runs over all elements of $\Sigma$ and $\pp:[n]\to\Zz$ runs over all functions with the property $\pp(i)=0$
for all $i\not\in I,$ form a full exceptional collection in the derived category
$\bD^b(\Coh[U_\Sigma/\bT]).$ The collection is ordered lexicographically by $\pp$ and then $|I|.$ 
\end{proposition}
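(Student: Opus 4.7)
The plan is to follow the proof of Proposition \ref{cn}, replacing Proposition \ref{ext} by its open-subset analog Proposition \ref{calcext}. First, to check that the collection is exceptional (and, as it happens, strong), I apply Proposition \ref{calcext} to a single object $\FF^\Sigma_{I,\pp}$: one has $I \cup I = I \in \Sigma$ and $\pp-\pp = \cchi_\emptyset$ with $\emptyset \subseteq I \subseteq \emptyset \cup I$, so $\Hom = \kk$ and $\Ext^{>0} = 0$. For vanishing in the backward direction, suppose $(\pp_1,|I_1|) <_{\mathrm{lex}} (\pp_2,|I_2|)$; by Proposition \ref{calcext}, a nontrivial $\Ext^*(\FF^\Sigma_{I_2,\pp_2},\FF^\Sigma_{I_1,\pp_1})$ requires $\pp_1-\pp_2 = \cchi_S$ for some $S \subseteq I_2 \subseteq S \cup I_1$. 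If $\pp_1 <_{\mathrm{lex}} \pp_2$, then $\pp_1-\pp_2$ is not termwise nonnegative, ruling this out. If $\pp_1 = \pp_2$, then $S=\emptyset$ forces $I_2 \subseteq I_1$, contradicting $|I_1| < |I_2|$.

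For fullness, I reduce to showing that the generated triangulated subcategory contains every equivariant line bundle $\js^*\Oo(-\pp) = \FF^\Sigma_{\emptyset,\pp}$ (noting that $\emptyset \in \Sigma$ automatically). Once this is established, Hilbert's syzygy theorem supplies every finitely generated $\Zz^n$-graded $\Sn$-module with a finite $\Zz^n$-graded free resolution, and applying the exact functor $\js^*$ yields a finite equivariant line-bundle resolution of any object of $\bD^b(\Coh[U_\Sigma/\bT])$.

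The line bundles are produced by downward induction on $|I|$ over $I \in \Sigma$, establishing that $\FF^\Sigma_{I,\pp}$ lies in the generated subcategory for every $\pp \in \Zz^n$, not only for those $\pp$ supported on $I$. The tool is the restriction to $U_\Sigma$ of the standard short exact sequence
\[
0 \to \FF^\Sigma_{I,\pp+\ee_k} \to \FF^\Sigma_{I,\pp} \to \FF^\Sigma_{I \cup \{k\}, \pp} \to 0
\]
for each $k \not\in I$. Its last term vanishes when $I \cup \{k\} \not\in \Sigma$ by Remark \ref{rem3.2}, in which case multiplication by $z_k$ becomes an isomorphism on $U_\Sigma$; otherwise the last term is in the subcategory by the induction hypothesis. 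Either way, translating $\pp$ by $\pm\ee_k$ preserves membership, so starting from $\pp$ supported on $I$ (which is in the exceptional collection) one can reach arbitrary $\pp\in\Zz^n$. The case $I=\emptyset$ then delivers every equivariant line bundle.

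The main obstacle is organizing the induction cleanly: in the $\Aa^n$ setting of Proposition \ref{cn} one starts from the single top element $I=[n]$, whereas here the base consists of the (possibly several, possibly lower-dimensional) maximal faces of $\Sigma$, and at each inductive stage one must treat the degenerate case $I\cup\{k\}\notin\Sigma$ alongside the honest extension case $I\cup\{k\}\in\Sigma$ uniformly. Everything else is a direct adaptation of the $\Aa^n$ argument.
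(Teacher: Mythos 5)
Your proposal is correct and follows essentially the same route as the paper: exceptionality (indeed strongness) via Proposition \ref{calcext}, and fullness by reducing to the equivariant line bundles $\FF^\Sigma_{\emptyset,\pp}$ (using essential surjectivity of $\js^*$ together with finite graded free resolutions) and then rerunning the induction of Proposition \ref{cn}, with the sole modification that $\FF^\Sigma_{I\cup\{k\},\pp}=0$ when $I\cup\{k\}\notin\Sigma$, so that translation by $\ee_k$ is then an isomorphism. Your write-up merely makes explicit the bookkeeping that the paper summarizes as ``the argument of Proposition \ref{cn} applies word for word.''
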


\begin{proof}
Proposition \ref{calcext}
shows that there are no $\Ext$ in the direction opposite to the order.
To prove that the exceptional collection is full, we observe that the functor $j_\Sigma^*$ is essentially surjective and therefore the equivariant line bundles $\FF^\Sigma_{\emptyset,\pp}$ generate $\bD^b(\Coh[U_\Sigma/\bT]).$ 
We now observe that the argument of Proposition \ref{cn} applies word for word
in this new situation. The only difference is that the sheaves $\FF_{I, \pp}$ are zero for $I\not\in \Sigma,$ which does not affect the proof.
\end{proof}
Proposition \ref{exc_col} is a part of more general and precise Theorem \ref{main2} below.

\begin{theorem}\label{main2}
The derived category $\bD^b(\Coh[U_\Sigma/\bT])$ possesses a strong full exceptional collection $\big\{{\FF}^{\Sigma}_{I, \pp}[\bp]\big\},$ which can be described as follows.
\begin{itemize}
\item[1)] Objects are  shifted sheaves of the form ${\FF}^{\Sigma}_{I, \pp}[\bp],$ where  $\pp:[n]\to\Zz$ is any function, the subset $I\in\Sigma$ contains $\supp\pp,$ and
$\bp=\sum_{i=1}^n \pp(i).$
\item[2)] The space of morphisms $\Hom_{[U_\Sigma/\bT]}({\FF}^{\Sigma}_{I, \pp}[\bp],  {\FF}^{\Sigma}_{J, \qq}[\bq])$ is nontrivial only if $(I\cup J)\in \Sigma$ and  $\qq=\pp+\cchi_{S}$
for some subset $S\subseteq I$ that contains $I\backslash (I\cap J).$  In this case the space is isomorphic to $\kk,$ and there is
a distinguished nontrivial element
\[
\phi_{I, \pp; S, J}\in \Hom_{[U_\Sigma/\bT]}({\FF}^{\Sigma}_{I, \pp}[\bp],  {\FF}^{\Sigma}_{J, \qq}[\bq]).
\]
\item[3)] The composition $\phi_{J, \qq; T, K}\circ\phi_{I, \pp; S, J}$ is nontrivial only if $I\cup K\in \Sigma$ and  $\qq=\pp+\cchi_{S},$ $T\subseteq I\cap J,$ $S\cap T=\emptyset.$
In this case, there is an equality
\[\phi_{J, \qq; T, K}\circ\phi_{I, \pp; S, J}=\phi_{I, \pp; S\sqcup T, K}.
\]
\end{itemize}
\end{theorem}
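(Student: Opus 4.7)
The plan is to deduce Theorem \ref{main2} from Theorem \ref{main1} by transporting the exceptional collection along the exact inverse image functor $\js^*: \bD^b(\Coh[\Aa^n/\bT]) \to \bD^b(\Coh[U_\Sigma/\bT])$. The main input is Proposition \ref{calcext}: for $I,J \in \Sigma$ and weights $\pp,\qq$ supported on $I,J$ respectively, the functor $\js^*$ kills all $\Ext$ spaces when $I \cup J \notin \Sigma,$ and otherwise induces an isomorphism on $\Ext$ spaces from the corresponding ones on $\Aa^n.$ Together with Theorem \ref{main1}, this determines both the morphism spaces and the composition law in the new setting.

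For part (1), the fact that the shifted sheaves $\{\FF^\Sigma_{I,\pp}[\bp]\}$ form a full exceptional collection is precisely Proposition \ref{exc_col} (combined with the cohomological shift by $\bp$, which is a purely formal reindexing that does not affect fullness or exceptionality). Strength of the collection is then immediate: by Theorem \ref{main1} the nonzero group $\Ext^s(\FF_{I,\pp},\FF_{J,\qq})$ occurs precisely in cohomological degree $s=|S|=\bq-\bp,$ so the shift by $\bp$ pushes everything into degree zero on $\Aa^n,$ and Proposition \ref{calcext}(b) transports this vanishing to $U_\Sigma.$

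For part (2), Proposition \ref{calcext}(a) immediately yields the vanishing when $I \cup J \notin \Sigma.$ When $I \cup J \in \Sigma,$ Proposition \ref{calcext}(b) gives an isomorphism $\js^*:\Hom_{[\Aa^n/\bT]}(\FF_{I,\pp}[\bp],\FF_{J,\qq}[\bq]) \xrightarrow{\sim} \Hom_{[U_\Sigma/\bT]}(\FF^\Sigma_{I,\pp}[\bp],\FF^\Sigma_{J,\qq}[\bq]),$ so the nonvanishing criterion $\qq=\pp+\cchi_S$ with $I\setminus(I\cap J)\subseteq S \subseteq I$ is inherited from Theorem \ref{main1}, and we simply set $\phi_{I,\pp;S,J}^\Sigma := \js^*(\phi_{I,\pp;S,J}).$

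For part (3), functoriality of $\js^*$ implies $\phi_{J,\qq;T,K}^\Sigma \circ \phi_{I,\pp;S,J}^\Sigma = \js^*(\phi_{J,\qq;T,K} \circ \phi_{I,\pp;S,J}).$ If the conditions $\qq=\pp+\cchi_S,$ $T\subseteq I\cap J,$ $S\cap T=\emptyset$ of Theorem \ref{main1} fail, the composition on $\Aa^n$ vanishes and so does its pullback. If those conditions hold, the composition on $\Aa^n$ equals $\phi_{I,\pp;S\sqcup T,K},$ whose pullback is $\phi_{I,\pp;S\sqcup T,K}^\Sigma$ when $I\cup K \in \Sigma$ and zero otherwise, again by Proposition \ref{calcext}. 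This matches the claimed formula exactly. The mildly subtle point, which I would flag as the only real bookkeeping hurdle, is that the hypotheses $I\cup J,\,J\cup K\in \Sigma$ guaranteeing nontriviality of the two factors do not by themselves force $I\cup K\in \Sigma;$ consequently the composition can legitimately vanish on $U_\Sigma$ while being nonzero on $\Aa^n,$ and this is exactly why the $I\cup K\in\Sigma$ clause appears in the statement of part (3).
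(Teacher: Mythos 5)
Your proposal is correct and follows essentially the same route as the paper: exceptionality and fullness from Proposition \ref{exc_col}, strength and the $\Hom$-space description from Proposition \ref{calcext}, the distinguished generators defined as $\js^*$ of the elements $\phi_{I,\pp;S,J}$ from Theorem \ref{main1}, and the composition law transported by functoriality of $\js^*$. Your remark about why the extra clause $I\cup K\in\Sigma$ is needed matches the paper's own discussion following the theorem.
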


\begin{proof}
Proposition \ref{exc_col} tells us  that the collection $\big\{{\FF}^{\Sigma}_{I, \pp}[\bp]\big\},$ where $\pp:[n]\to\Zz$ is any function and the subset $I\in \Sigma$ contains $\supp\pp,$ forms an
exceptional collection. By Proposition \ref{calcext} this collection is strong when sheaves ${\FF}^{\Sigma}_{I, \pp}$ are shifted with $\bp=\sum_{i=1}^n \pp(i).$
This proposition also implies that $\Hom_{[U_\Sigma/\bT]}({\FF}^{\Sigma}_{I, \pp}[\bp],  {\FF}^{\Sigma}_{J, \qq}[\bq])$ is nontrivial only when
$(I\cup J)\in \Sigma$ and $(\qq - \pp)=\cchi_{S}$
for some subset $S\subseteq I$ containing $I\backslash (I\cap J).$ In this case the space is one-dimensional.

We define $\phi_{I, \pp; S, J}\in \Hom_{[U_\Sigma/\bT]}({\FF}^{\Sigma}_{I, \pp}[\bp],  {\FF}^{\Sigma}_{J, \qq}[\bq])$ as the images
of the similar elements from $\Hom_{[\Aa^n/\bT]}({\FF}_{I, \pp}[\bp],  {\FF}_{J, \qq}[\bq])$ under inverse image functor
$\js^*.$ By Proposition \ref{calcext} they  are nontrivial and property 3) follows from property 3) of Theorem \ref{main1}.
\end{proof}

It is informative to compare the exceptional collections for the  equivariant derived category of $U_\Sigma$ with that of $\Aa^n.$
The set of objects for $U_\Sigma$ is a subset of that for $\Aa^n$ because sheaves $\FF_{I,\pp}$ restrict to zero unless
$I\in\Sigma.$ Moreover, some of the morphisms that were nonzero in $\Aa^n$ map to zero under the restriction functor. Specifically,
suppose we have nonzero $\Ext_{[U_\Sigma/\bT]}^*(\FF^{\Sigma}_{I,\pp},\FF^{\Sigma}_{J,\qq})$ and
$\Ext_{[U_\Sigma/\bT]}^*(\FF^{\Sigma}_{J,\qq},\FF^{\Sigma}_{K,\rr})$ which correspond to sets $S$ and $T.$
If $T$ is not contained in $I\backslash S$ then we definitely have
$\Ext_{[U_\Sigma/\bT]}^*(\FF^{\Sigma}_{I,\pp},\FF^{\Sigma}_{K,\rr})=0$ by the same argument as for $\Aa^n.$
However, the presence of $\Sigma$ gives an additional condition $I\cup K\in \Sigma.$
It can happen that $I\cup J\in \Sigma,\; J\cup K\in \Sigma,$ but $I\cup K\not \in \Sigma,$ in which case the composition
must be zero in $\bD^b(\Coh[U_\Sigma/\bT])$ whereas it was nonzero in $\bD^b(\Coh[\Aa^n/\bT]).$ We argue that every such zero
composition can be explained by the fact that it can be rerouted in $\bD^b(\Coh[\Aa^n/\bT])$ through an object that restricts to zero. Indeed, under the assumptions above, we have a composition of nonzero morphisms in $\bD^b(\Coh[\Aa^n/\bT])$
\[
\phi_{I,\pp; S\sqcup T,K}=\phi_{I\cup K,\pp; S\sqcup T, K}\circ\phi_{I, \pp; \emptyset, I\cup K},
\]
where the intermediate object ${\FF}_{I\cup K, \pp}[\bp]$ restricts to zero on $U_\Sigma.$
More generally, any morphism $\phi_{I,\pp; S,J}$ on $\Aa^n$ with $I\cup J\not\in\Sigma$ can be rerouted through
$\FF^{\Sigma}_{I\cup J,\pp}$ which restricts to zero on $U_\Sigma.$

We conclude this section by restating the result of Theorem \ref{main2} in the spirit of Theorem \ref{main1.5}.

\begin{theorem}\label{main2.5}
The derived category $\bD^b(\Coh[U_\Sigma/\bT])$ of $\bT$\!--equivariant coherent sheaves on $U_\Sigma$ possesses a strong full exceptional collection of objects $E_{a}$ indexed by $a=(a_1,\ldots,a_n)\in\Zz^n$ such that the set of $i$ for which $a_i\neq 0$ lies in $\Sigma.$
The objects $E_a$ are ordered lexicographically.
The morphism space $\Hom(E_a,E_b)$ is isomorphic to $\kk$ if and only if for all $i$ either $b_i-a_i\in \{0,1\}$ or $(b_i,a_i) = (1,-1)$ and, moreover, the set of $i$ for which $a_i$ or $b_i$ are nonzero lies in $\Sigma.$ All other morphism spaces are zero. 
The nontrivial spaces $\Hom(E_a,E_b)$ come equipped with a distinguished generator $\rho_{a,b}.$
The composition of morphisms $\Hom(E_a,E_b)\otimes \Hom(E_b,E_c)\to \Hom(E_a,E_c)$ is nontrivial if and only if all three spaces are nontrivial. In these cases $\rho_{b,c}\circ\rho_{a,b}=\rho_{a,c}.$
\end{theorem}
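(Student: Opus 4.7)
The plan is to deduce Theorem \ref{main2.5} from Theorem \ref{main2} by exactly the same reindexing bijection that was used to pass from Theorem \ref{main1} to Theorem \ref{main1.5}. In other words, I would set
\[
E_a := \FF^{\Sigma}_{I,\pp}[\bp]
\]
where the pair $(I,\pp)$ is recovered from $a=(a_1,\ldots,a_n)\in \Zz^n$ by the rules \eqref{rulesback}: $I=\{i : a_i\neq 0\}$, and $\pp(i)=a_i$ if $a_i\leq 0$ and $\pp(i)=a_i-1$ if $a_i>0$. The inverse rule is \eqref{rules}. Exactly as in the proof of Theorem \ref{main1.5}, this bijection is well-defined precisely because we restrict to pairs $(I,\pp)$ with $\supp\pp\subseteq I$, which is the normalization used in Theorem \ref{main2}.

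The first step is to check that the objects are correctly indexed. The index set in Theorem \ref{main2} consists of pairs $(I,\pp)$ with $I\in\Sigma$ and $\supp\pp\subseteq I$. Under the bijection above, the condition $I\in\Sigma$ translates directly into the condition $\{i:a_i\neq 0\}\in\Sigma$, since $I=\{i:a_i\neq 0\}$. So the indexing set of Theorem \ref{main2.5} matches the indexing set of Theorem \ref{main2}.

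Next I would transfer the $\Hom$-description. By Theorem \ref{main2}, $\Hom(\FF^{\Sigma}_{I,\pp}[\bp],\FF^{\Sigma}_{J,\qq}[\bq])$ is nontrivial exactly when $I\cup J\in\Sigma$ and $\qq-\pp=\cchi_S$ for some $S\subseteq I$ with $S\supseteq I\setminus(I\cap J)$, equivalently $S\subseteq I\subseteq S\cup J$. The analysis in the proof of Theorem \ref{main1.5} already verifies that the combinatorial condition ``$\qq-\pp=\cchi_S$ with $S\subseteq I\subseteq S\cup J$'' is equivalent under the reindexing to ``for all $i$, either $b_i-a_i\in\{0,1\}$ or $(b_i,a_i)=(1,-1)$'', so I can reuse that case analysis verbatim. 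The only additional ingredient here is the condition $I\cup J\in\Sigma$, and this translates directly into $\{i:a_i\neq 0\}\cup\{i:b_i\neq 0\}=\{i:a_i\neq 0\text{ or }b_i\neq 0\}\in\Sigma$, which is precisely the extra condition stated in Theorem \ref{main2.5}.

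Finally, the distinguished generators $\rho_{a,b}$ are defined as the images of the distinguished generators $\phi_{I,\pp;S,J}$ from Theorem \ref{main2} under the reindexing. The composition law $\rho_{b,c}\circ\rho_{a,b}=\rho_{a,c}$ follows immediately from part (3) of Theorem \ref{main2}: one checks that the conditions ``$T\subseteq I\cap J$, $S\cap T=\emptyset$, and $I\cup K\in\Sigma$'' translate into the statement that all three $\Hom$-spaces $\Hom(E_a,E_b)$, $\Hom(E_b,E_c)$, $\Hom(E_a,E_c)$ are simultaneously nontrivial, which is the nontriviality criterion already established in the previous step. There is no serious obstacle in any of this; the only delicate point is the bookkeeping for the non-trivial case $(b_i,a_i)=(1,-1)$, which corresponds exactly to the transition from $\pp(i)=-1$ to $\qq(i)=0$ with $i\in S\subseteq I\cap J$, and this was already carefully worked out in the proof of Theorem \ref{main1.5}, so I would simply invoke that analysis rather than redo it.
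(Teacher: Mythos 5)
Your proposal is correct and follows essentially the same route as the paper: the paper's proof of Theorem \ref{main2.5} is precisely to apply the reindexing \eqref{rules}/\eqref{rulesback} from the proof of Theorem \ref{main1.5} to the collection of Theorem \ref{main2}, observing that $I\in\Sigma$ becomes $\{i : a_i\neq 0\}\in\Sigma$ and that the extra condition $I\cup J\in\Sigma$ becomes the stated support condition on $a$ and $b$. Your write-up just spells out the bookkeeping that the paper delegates to the earlier proof.
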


\begin{proof}
We reindex $\FF_{I,\pp}$ with $I\in\Sigma$ according to \eqref{rules} and \eqref{rulesback} as in the proof of Theorem \ref{main1.5}. The resulting indices $a$ are precisely the ones with $\{i,a_i\neq 0\}\in\Sigma.$ The rest follows from Theorem \ref{main2}.
\end{proof}

\section{Applications to toric varieties and stacks}\label{sec4}
We will now apply the results of the previous section to the equivariant 
categories of smooth toric Deligne-Mumford stacks.
For simplicity, we only consider the toric DM stacks that have trivial 
stack structure at the generic point.
The most convenient way to describe such toric stacks is via the 
\emph{extended} stacky fan of~\cite{Jiang}
(another possible choice is a stacky fan of~\cite{BCS}).

An extended stacky fan is a triple ${\bf\Sigma} = (N,\Sigma,\{v_i\})$
consisting of a finitely generated free abelian group $N,$ a rational 
simplicial fan $\Sigma$ in $N_\Rr,$
and a choice of lattice points $v_i,$ $i\in [n],$ such that each ray in 
$\Sigma$ contains one  of these points
(this identifies the fan $\Sigma$ with a simplicial complex on the set 
$[n],$ see \cite{Jiang} for details).
Furthermore, with this approach we can (and will) assume that the 
integer span of $v_i$ is all of $N.$

The toric stack $\Pp_{\Sigma}$ is defined as the stack quotient of the open subspace $U_\Sigma$ of $\Aa^n$ considered in the previous section by the algebraic subgroup $G_{\Sigma}$ of $\Gm^n$ which is the kernel of the surjective map
\[
\Gm^n \to \Hom(N^\vee,\Gm)
\quad
\text{given by}
\quad (\lambda_1,\ldots,\lambda_n) \mapsto \{ m\to \prod_{i=1}^n \lambda_i^{m\cdot v_i}\}.
\]
The DM stack $\Pp_{\Sigma}$ admits a natural action of the torus $\bT_{\Pp_\Sigma}=\Gm^n/G_{\Sigma}=\Hom(N^\vee,\Gm).$
This stack is a scheme (and is then a smooth toric variety) if and only if for every cone $\sigma\in \Sigma,$ the chosen points $v_i$ on rays of $\sigma$ span a saturated subgroup of $N.$

Our initial motivation behind this paper was to understand the abelian category of $\bT_{\Pp_\Sigma}$\!--equivariant coherent sheaves on $ \Pp_{\Sigma}$ and the corresponding bounded derived category. Our first observation is that the abelian category in question depends only on the simplicial complex. Indeed, it is equivalent to the category of $\Gm^n$\!--equivariant
sheaves on $U_\Sigma$ by construction, because
\[
[\Pp_{\Sigma}/\bT_{\Pp_\Sigma}] = [(U_\Sigma/G_\Sigma)/\bT_{\Pp_\Sigma}]
=[U_\Sigma/\Gm^n].
\]

As a consequence, Proposition \ref{exc_col} of the previous section gives full strong exceptional collections in the equivariant derived categories of toric varieties.
\begin{proposition}\label{exc_col_toric}
The equivariant toric derived category $\bD^b[\Pp_{\Sigma}/\bT_{\Pp_\Sigma}]$ has a full strong exceptional collection made of (pushforwards of) shifts of invertible sheaves on closures of toric strata. Specifically, the elements of the collection are the shifts of the equivariant sheaves $\FF^\Sigma_{I,\pp}[\bp],$ where $I$ runs over all subsets of $\Sigma$ and $\pp:[n]\to\Zz$ runs over all functions with the property $\pp(i)=0$
for all $i\not\in I.$
\end{proposition}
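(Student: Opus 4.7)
The plan is essentially to import Proposition \ref{exc_col} (or equivalently Theorem \ref{main2}) verbatim through the stack identification established just before the proposition. First I would invoke the equivalence of quotient stacks
\[
[\Pp_{\Sigma}/\bT_{\Pp_\Sigma}] \;=\; [(U_\Sigma/G_\Sigma)/\bT_{\Pp_\Sigma}] \;=\; [U_\Sigma/\Gm^n],
\]
which was derived from the definition of $\Pp_\Sigma$ as the stack quotient $U_\Sigma/G_\Sigma$ together with the exact sequence $1\to G_\Sigma \to \Gm^n \to \bT_{\Pp_\Sigma}\to 1.$ This equivalence induces an equivalence of bounded derived categories of coherent sheaves, which allows us to transport results about $\bD^b(\Coh[U_\Sigma/\bT])$ directly to the toric setting.

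Next I would apply Proposition \ref{exc_col}: the equivariant sheaves $\{\FF^\Sigma_{I,\pp}\}$, where $I\in\Sigma$ and $\pp:[n]\to\Zz$ satisfies $\pp(i)=0$ for all $i\notin I$, form a full exceptional collection in $\bD^b(\Coh[U_\Sigma/\bT])$. By Theorem \ref{main2}, the shifted objects $\FF^\Sigma_{I,\pp}[\bp]$ with $\bp=\sum_{i=1}^n \pp(i)$ form a \emph{strong} full exceptional collection. Transporting along the equivalence above gives the desired strong full exceptional collection in $\bD^b[\Pp_\Sigma/\bT_{\Pp_\Sigma}]$.

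Finally, I would identify the geometric meaning of the resulting objects. The module $M_{I,\pp}$ defining $\FF_{I,\pp}$ is $\kk[z_1,\ldots,z_n]/\la z_i, i\in I\ra$ with a grading shift, so its restriction $\FF^\Sigma_{I,\pp}$ to $U_\Sigma$ is the pushforward (under the closed embedding) of an equivariant line bundle on the closure of the torus stratum cut out by $\{z_i=0 \mid i\in I\}$. Under the identification $[U_\Sigma/\Gm^n]=[\Pp_\Sigma/\bT_{\Pp_\Sigma}]$, these closed subschemes descend precisely to the closures of the torus-invariant strata in $\Pp_\Sigma$ corresponding to the cones of $\Sigma$, which yields the geometric description in the statement.

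Since every ingredient has been assembled in the preceding sections, there is no real obstacle: the argument is purely a matter of transport along a stack equivalence combined with unpacking the construction of $\FF^\Sigma_{I,\pp}$. The only mildly subtle point worth highlighting is that the condition $I\in\Sigma$ (as opposed to $I\subseteq[n]$ arbitrary) is forced automatically by Remark \ref{rem3.2}, since $\FF^\Sigma_{I,\pp}=0$ whenever $I\notin\Sigma$; this matches the enumeration in the statement.
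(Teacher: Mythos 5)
Your proposal is correct and follows the paper's own argument, which simply observes that $[\Pp_\Sigma/\bT_{\Pp_\Sigma}]=[U_\Sigma/\Gm^n]$ and then cites Proposition \ref{exc_col} (with the shifts from Theorem \ref{main2} for strongness). The extra detail you supply — the geometric identification of the objects and the remark that $I\in\Sigma$ is forced by the vanishing of $\FF^\Sigma_{I,\pp}$ for $I\notin\Sigma$ — is consistent with, and just an unpacking of, what the paper leaves implicit.
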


\begin{proof}
This is just a reformulation of Proposition \ref{exc_col}.
\end{proof}

\begin{remark}
{\rm
It is instructive to observe that the equivariant abelian category of a toric variety does not detect the dimension of the variety. For example,
when the toric variety is $\Gm^k,$ i.e. the simplicial complex consists of the empty set only, the resulting stack $[\Pp_{\Sigma}/\Gm^k]$ is a point. More generally, the derived category of $[\Pp_{\Sigma}/\bT_{\Pp_\Sigma}]$ is equal to that of $[U_\Sigma/\Gm^{\dim U_\Sigma}],$ but the spaces $\Pp_{\Sigma}$ and $U_\Sigma$ are usually of different dimension.
}
\end{remark}

We also note that the abelian category of equivariant coherent sheaves on a weighted projective space with weights $w_1,\ldots, w_k$ is equivalent to the category of $\Gm^k$\!--equivariant coherent sheaves on $\Aa^k\setminus\{0\}$ for any weights with $\gcd(w_1,\ldots,w_k)=1.$ In fact, we will calculate the $\Ext$ spaces between the equivariant line bundles on the weighted projective space in the following lemma which will be used later.
\begin{lemma}\label{trivial}
For the functions $\pp,\qq:\{1,\ldots,k\}\to \Zz$ and the corresponding equivariant line bundles on the weighted projective space of dimension $(k-1)$ there holds
\[
\Ext^{j}_{[\Pp^{w_1,\ldots,w_k}/\Gm^{k-1}]}(\FF(\pp),\FF(\qq)) =
\left\{
\begin{array}{ll}
\kk,&{\rm if~} j=0,~{\rm and~} \qq(i)\geq \pp(i)~{\rm for~all~}i\\
\kk,&{\rm if~} j=k-1,~{\rm and~} \qq(i)< \pp(i)~{\rm for~all~}i\\
0,&{\rm otherwise.}
\end{array}
\right.
\]
\end{lemma}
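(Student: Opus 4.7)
The plan is to realize $[\Pp^{w_1,\dots,w_k}/\Gm^{k-1}]$ as $[U_\Sigma/\Gm^k]$ where $\Sigma$ is the simplicial complex on $[k]$ consisting of all \emph{proper} subsets and $U_\Sigma=\Aa^k\setminus\{0\};$ this identification is independent of the weights $w_i$ and places us inside the framework of Section \ref{sec3}. Under it, $\FF(\pp)$ corresponds to the equivariant line bundle on $U_\Sigma$ attached to the $\Zz^k$-graded module $S(\pp),$ where $S:=\kk[z_1,\dots,z_k].$ Since $\FF(\pp)$ is locally free, the local-to-global spectral sequence degenerates and
\[
\Ext^j_{[U_\Sigma/\Gm^k]}(\FF(\pp),\FF(\qq))\;\cong\;H^j\bigl([U_\Sigma/\Gm^k],\FF(\qq-\pp)\bigr),
\]
which equals the $\Zz^k$-degree-$\00$ part of the graded cohomology module $H^j(U_\Sigma,\OO(\rr))$ with $\rr:=\qq-\pp.$

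I would then compute $H^j(U_\Sigma,\OO(\rr))$ as a $\Zz^k$-graded module using the \v{C}ech cover of $U_\Sigma$ by the affine opens $V_i=\{z_i\ne 0\},$ $i=1,\dots,k,$ in the same spirit as the proof of Lemma \ref{cohom}. The augmented \v{C}ech complex is the classical complex whose cohomology is the local cohomology $H^*_{(0)}(S(\rr))$ of $S(\rr)$ with support at the origin. Because $S$ is Cohen--Macaulay of dimension $k,$ this local cohomology vanishes in all degrees except the top one. Combined with the usual local cohomology long exact sequence, this yields, for $k\ge 2,$ isomorphisms $H^0(U_\Sigma,\OO(\rr))\cong S(\rr)$ and $H^{k-1}(U_\Sigma,\OO(\rr))\cong H^k_{(0)}(S)(\rr),$ with $H^j(U_\Sigma,\OO(\rr))=0$ in all other degrees. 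Here $H^k_{(0)}(S)$ has $\Zz^k$-graded $\kk$-basis given by Laurent monomials $z^\aa$ with $a_i<0$ for every $i,$ placed in degree $\aa.$ The case $k=1$ is immediate since $U_\Sigma=\Gm$ is affine and the two conditions $\qq\ge\pp$ and $\qq<\pp$ are complementary, together covering every $\rr\in\Zz.$

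The final step is to extract the $\Zz^k$-degree-$\00$ component. In $S(\rr)$ it is $S_\rr,$ one-dimensional and spanned by $z^\rr$ iff $\rr\ge\00,$ equivalently $\qq(i)\ge\pp(i)$ for every $i,$ and zero otherwise. In $H^k_{(0)}(S)(\rr)$ it is $H^k_{(0)}(S)_\rr,$ one-dimensional iff every coordinate of $\rr$ is strictly negative, equivalently $\qq(i)<\pp(i)$ for every $i,$ and zero otherwise. This matches the description in the lemma. The only point demanding care is bookkeeping: tracking the grading shift by $\rr$ through the identification of the truncated \v{C}ech complex with local cohomology of $S,$ and keeping strict versus non-strict inequalities consistent; there is no conceptual obstacle beyond that.
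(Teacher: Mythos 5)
Your proof is correct and follows essentially the same route as the paper's: both reduce to the degree-zero part of the \v{C}ech complex for the cover of $\Aa^k\setminus\{0\}$ by the affine opens $\{z_i\neq 0\},$ augmented by the global-sections term. The only difference is that you establish the acyclicity of the augmented complex by identifying it with local cohomology of the Cohen--Macaulay ring $S$ and only then pass to degree $\00,$ whereas the paper passes to degree $\00$ first and checks the acyclicity of the resulting combinatorial complex directly.
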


\begin{proof}
The $\Ext$ spaces in question are the cohomology spaces of $\FF(\qq-\pp).$ As usual, we calculate these spaces using \v{C}ech cover by open affine subsets $U_i$ characterized by $z_i\neq 0.$ The degree zero part of the term of the \v{C}ech complex that corresponds to $I\subseteq \{1,\ldots,k\}$ is $\kk$ if and only if $(\qq-\pp)\vert_{\bar I}\geq 0.$ 
If we augment the \v{C}ech complex to include the term with $I=\emptyset,$ we  observe that its
degree zero part is acyclic as long as  there exists $i$ with $\qq(i)\geq \pp(i).$ So in this case  the degree zero part of the cohomology of the \v{C}ech complex is equal to the (missing) $I=\emptyset$ term and contributes to  $\Ext^0.$  If $\qq(i)<\pp(i)$ for all $i,$ then the only nontrivial degree zero term of the \v{C}ech complex occurs at $I=[k]$ and contributes $\kk$ to $\Ext^{k-1}.$ 
\end{proof}

The equivariant derived categories of toric varieties depend even less on the varieties themselves. To illustrate this phenomenon, let us start with the following example.

\begin{proposition}\label{bla2}
The $\Gm^2$\!--equivariant derived category of the affine plane $\Aa^2$ is equivalent to the  $\Gm^2$\!--equivariant derived category of the blowup $\widetilde{\Aa^2}$ of $\Aa^2$ at the origin.
\end{proposition}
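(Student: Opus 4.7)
The plan is to apply Proposition \ref{exc_col_toric} to both sides, producing explicit full strong exceptional collections. For $[\Aa^2/\Gm^2] = [U_{\Sigma_1}/\Gm^2]$ with $\Sigma_1 = 2^{\{1,2\}}$, Theorem \ref{main1.5} gives a collection indexed by $\Zz^2$. Identifying $\widetilde{\Aa^2}$ with $\Pp_{\Sigma_2}$ for the fan with rays $e_1, e_2, e_1+e_2$ (so that $\Sigma_2 = \{\emptyset,\{1\},\{2\},\{3\},\{1,3\},\{2,3\}\}$ and $[\widetilde{\Aa^2}/\Gm^2] = [U_{\Sigma_2}/\Gm^3]$), Theorem \ref{main2.5} gives a collection indexed by $\{(c_1,c_2,c_3) \in \Zz^3 : c_1 c_2 = 0\}$.

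Since a triangulated category generated by a full strong exceptional collection is equivalent to the bounded derived category of modules over the endomorphism algebra of the direct sum of the collection, the proof reduces to exhibiting an isomorphism of these two endomorphism algebras. Equivalently, I would produce a bijection $\phi$ between the two index sets that preserves the Hom-nontriviality conditions and composition law described in Theorems \ref{main1.5} and \ref{main2.5}. Geometrically, such $\phi$ should mimic the stellar subdivision of $|\Sigma_1|$ at its midpoint (the new vertex $3$) producing $|\Sigma_2|$: indices $(a_1,a_2)$ with both coordinates strictly positive (corresponding to twists of the origin skyscraper in $\Aa^2$) have to split between triples supported on $\{3\},\{1,3\},\{2,3\}$ (reflecting the exceptional divisor $D_3$ and its two torus-fixed endpoints on $\widetilde{\Aa^2}$), while indices with non-positive coordinates match more directly to twists of $\OO_{\widetilde{\Aa^2}}$ or $\OO_{D_i}$.

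The main obstacle is twofold: (i) writing down the correct piecewise formula for $\phi$—the most natural pullback-type ansatz $(a_1,a_2)\mapsto(a_1,a_2,a_1+a_2)$ fails to land in $\{c : c_1 c_2 = 0\}$, so a more subtle case-by-case definition partitioning $\Zz^2$ by sign patterns of $(a_1,a_2)$ is needed; and (ii) verifying via a finite but intricate case analysis that $\phi$ preserves the combinatorial condition ``$b_i-a_i \in \{0,1\}$ or $(b_i,a_i) = (1,-1)$'' together with the extra support condition ``$\{i : c_i \neq 0 \text{ or } c'_i \neq 0\} \in \Sigma_2$'', and also that the composition $\rho_{b,c} \circ \rho_{a,b} = \rho_{a,c}$ is transported faithfully through $\phi$. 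This example serves as a prototype for the more general equivariant blowup invariance (Theorem \ref{main3}) and the PL-homeomorphism invariance (Theorem \ref{main4}) that follow, and its argument will extend to arbitrary stellar subdivisions.
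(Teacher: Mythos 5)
Your overall strategy---exhibit full strong exceptional collections on both sides and compare the resulting endomorphism algebras---is exactly the paper's strategy, but the particular collections you propose to match cannot be matched: the bijection $\phi$ you are looking for does not exist. Any bijection of index sets preserving the Hom-nontriviality conditions of Theorems \ref{main1.5} and \ref{main2.5} must preserve the ``out-degree'' of each object $E_a$, i.e.\ the number of $b$ with $\Hom(E_a,E_b)\neq 0.$ On $[\Aa^2/\Gm^2]$ each coordinate $i$ contributes $3$ admissible values of $b_i$ when $a_i=-1$ (namely $b_i\in\{-1,0,1\}$) and $2$ otherwise, so the out-degree equals $9$ for exactly one object, $E_{(-1,-1)}.$ On $[U_{\Sigma_2}/\Gm^3]$ a direct count, taking into account the constraints $c_1c_2=0$ and $\supp(c)\cup\supp(d)\in\Sigma_2,$ gives out-degree $9$ for exactly three objects, $E_{(0,0,-1)},$ $E_{(-1,0,-1)}$ and $E_{(0,-1,-1)}.$ Hence the two $\kk$\!--linear categories spanned by these collections are not equivalent, and no case-by-case formula for $\phi$ will work. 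This is the expected geometric phenomenon: the equivalence of the ambient derived categories does not carry the stratified collection of one side to that of the other, since the skyscraper at the origin has no single counterpart among the strata of the blowup.

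The paper circumvents this by using a different full strong exceptional collection on the blowup side. Writing $[\widetilde{\Aa^2}/\Gm^2]\cong[U_\Sigma/\Gm^3]$ with $U_\Sigma=\Aa^3\setminus\{z_1=z_2=0\},$ it computes the $\Ext$ spaces between all equivariant line bundles $\FF^\Sigma(a_1,a_2,a_3)$ via a two-chart \v{C}ech complex, shows that the line bundles $\FF^\Sigma\bigl(a_1,a_2,\lfloor(a_1+a_2)/2\rfloor\bigr)$ for $(a_1,a_2)\in\Zz^2$ form a full strong exceptional collection (fullness via the Koszul sequences $0\to\FF^\Sigma(a_1-1,a_2-1,a_3)\to\FF^\Sigma(a_1,a_2-1,a_3)\oplus\FF^\Sigma(a_1-1,a_2,a_3)\to\FF^\Sigma(a_1,a_2,a_3)\to0$), and checks that its $\Hom$ spaces canonically agree with those between the line bundles $\FF(a_1,a_2)$ of Proposition \ref{linebundles} on $[\Aa^2/\Gm^2].$ So to salvage your argument you must replace at least one of your two collections, e.g.\ by a collection of line bundles as above; matching the collections of Theorems \ref{main1.5} and \ref{main2.5} directly is impossible.
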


\begin{proof}
The derived category of $[\Aa^2/\Gm^2]$ is generated by the line bundles $\FF(a_1,a_2)$ for $a_1,a_2\in\Zz$ where we identify $(a_1,a_2)$ and the corresponding function $\aa:\{1,2\}\to \Zz.$

The derived category of $[\widetilde{\Aa^2}/\Gm^2]$ is equivalent to the derived category of
$[U_\Sigma/\Gm^3]$ where $\Sigma$ is the simplicial complex in $\{1,2,3\}$ which consists of
\[
\{1,3\},\{2,3\}, \{1\},\{2\},\{3\},\emptyset.
\]
In other words we have $U_\Sigma=\Aa^3\setminus\{z_1=z_2=0\}.$

The derived category of $[U_\Sigma/\Gm^3]$  is generated by the line bundles $\FF^\Sigma(a_1,a_2,a_3).$
We can calculate the $\Ext$ spaces between these line bundles by looking at the \v{C}ech complex with two open affine subsets
$U_1$ and $U_2$ characterized by $z_1\neq 0$ and $z_2\neq 0$ respectively. As in Lemma \ref{trivial}, we see that
\[
\Hom(\FF^\Sigma(a_1,a_2,a_3),\FF^\Sigma(b_1,b_2,b_3))=\kk
\]
if $b_i\geq a_i$ for all $i,$ and
\[
\Ext^1(\FF^\Sigma(a_1,a_2,a_3),\FF^\Sigma(b_1,b_2,b_3))=\kk
\]
if $b_3\geq a_3,$ $b_1<a_1,$ $b_2<a_2.$ All the other $\Ext$ spaces are zero.

We observe that the line bundles
\[\FF^\Sigma(a_1,a_2, \left\lfloor \frac {a_1+a_2}2 \right\rfloor)
\]
form a full strong exceptional collection. Specifically, we have
\[
\Hom(\FF^\Sigma(a_1,a_2, \left\lfloor \frac {a_1+a_2}2 \right\rfloor),\FF^\Sigma(b_1,b_2, \left\lfloor \frac {b_1+b_2}2 \right\rfloor))=\kk
\]
if and only if $b_1\geq a_1$ and $b_2\geq a_2.$ All the $\Ext$ groups between these line bundles are zero, because
$ \left\lfloor \frac {a_1+a_2}2 \right\rfloor\geq  \left\lfloor \frac {b_1+b_2}2 \right\rfloor$ implies that $a_1\geq b_1$ or $a_2\geq b_2.$
The collection is full exceptional, because the exact sequences on $U_\Sigma$
\[
0\to \FF^\Sigma(a_1-1,a_2-1,a_3)\to  \FF^\Sigma(a_1,a_2-1,a_3) \oplus  \FF^\Sigma(a_1-1,a_2,a_3) \to  \FF^\Sigma(a_1,a_2,a_3) \to 0
\]
can be used to generate the line bundles $\FF^\Sigma(a_1,a_2,c)$ for a fixed value of $c$ starting with two diagonal lines with
$a_1+a_2=2c$ and $a_1+a_2=2c+1.$

We note that the $\Hom$ spaces between the above bundles are canonically isomorphic to the $\Hom$ spaces between the corresponding
line bundles $\FF(a_1,a_2)$ on $[\Aa^2/\Gm^2],$ which provides the desired equivalence.
\end{proof}

\begin{remark}
{\rm
The conceptual meaning of the above equivalence is the following. Consider the following two stacky fans in $\Zz^2.$
The fan $\Sigma_1$ has vertices $v_1=(1,1)$ and $v_2=(-1,1),$ and the simplicial complex with maximum set $\{1,2\}.$ The corresponding toric DM stack $\Pp_{\Sigma_1}$  is $[\Aa^2/\Zz_2].$ The second fan $\Sigma_2$ has vertices $v_1=(1,1),$ $v_2=(-1,1),$ and $v_3=(0,1)$ with the two maximum sets $\{1,3\}$ and $\{2,3\}.$ The corresponding toric variety $\Pp_{\Sigma_2}$ is the resolution of $A_1$ singularity.
It is well known, see for example the work of Kawamata \cite{Kawamata}, that $\Pp_{\Sigma_1}$  and $\Pp_{\Sigma_2}$ are derived equivalent. It turns out that the functor naturally extends to the equivariant setting. Then it remains to notice that the (abelian) equivariant categories of coherent sheaves on  $[\Aa^2/\Zz_2]$ and the resolution of $A_1$ singularity are equivalent to those of $\Aa^2$ and $\widetilde{\Aa^2}$ respectively.
}
\end{remark}

We can generalize the above construction to a more general blowup (see \cite{BKF}). We will state the result in terms of simplicial complexes, where it is usually called a stellar subdivision, see \cite{Lickorish}.
Let $\Sigma$ be a simplicial complex on the set $\{1,\ldots, n\}$ and fix $\sigma \in \Sigma$  of size at least two. We introduce the blowup of $\Sigma$ at $\sigma$
as a simplicial complex  on $\{1,\ldots, n,n+1\}$ by replacing every maximum set $\tau\in\Sigma$ that contains $\sigma$ by $|\sigma|$ sets
\[
\Big\{\tau\cup\{n+1\}\setminus\{i\}\Big\}_{i\in \sigma}
\]
and taking all of their subsets. Another way to define the complex $\widetilde\Sigma$ is by saying that $I\in \widetilde\Sigma$ if and only if
\begin{equation}\label{wts}
(n+1)\not\in I, ~I\in\Sigma,~\sigma\not\subseteq I,~{\rm ~or~}~(n+1)\in I,~I\setminus\{n+1\}\in\Sigma, (I\setminus\{n+1\})\cup\sigma\in \Sigma,
~\sigma\not\subseteq I.
\end{equation}

We define a functor from $\bD^b(\Coh[U_\Sigma/\Gm^n])$ to $\bD^b(\Coh[U_{\widetilde\Sigma}/\Gm^{n+1}])$
as follows. There is a map $\pi:U_{\widetilde\Sigma}\to U_\Sigma$ given by
\begin{equation}\label{blp}
\pi^* z_i = \left\{\begin{array}{ll}
\tilde z_i,& i\not\in\sigma\\
\tilde z_i\tilde z_{n+1},& i \in\sigma
\end{array}
\right.
\end{equation}
with the induced map on the open subsets $\Gm^{n+1}\to\Gm^n$ which is a group homomorphism. This gives a map of stacks
and the corresponding pullback in derived categories
 $\pi^*:\bD^b(\Coh[U_\Sigma/\Gm^n])\to \bD^b(\Coh[U_{\widetilde\Sigma}/\Gm^{n+1}]).$
Note that it sends $\FF(\pp)$ to $\FF(t(\pp))$ with $t(\pp)(n+1)=\sum_{i\in\sigma} \pp(i)$ and $t(\pp)(i)=\pp(i)$ for $i<n+1.$

For each integer $k>0$ there is a map $\zeta_k: U_{\widetilde\Sigma}\to U_{\widetilde\Sigma}$ given by
\[
\zeta_k^* \tilde z_i = \left\{\begin{array}{ll}
 \tilde z_i,& i\neq n+1\\
\tilde z_{n+1}^{k},& i=n+1.
\end{array}
\right.
\]
This map gives a  group homomorphism $\Gm^{n+1}\to \Gm^{n+1}$ which 
fits into a short exact sequence
\begin{align}\label{muk}
1\to {\mathbb \mu}_k\to \Gm^{n+1}\to \Gm^{n+1}\to 1.
\end{align}
For every equivariant sheaf $F$ its pushforward $\zeta_{k*}F$ is 
equivariant with respect to the induced action of $\Gm^{n+1}.$ After taking ${\mathbb \mu}_k$ invariants (at every invariant open set), it becomes equivariant with respect to the action of
the target (second copy in \eqref{muk}) $\Gm^{n+1}.$ We abuse the notation to denote this sheaf of invariants by
$\zeta_{k*}F(U).$ In other words, sections of $\zeta_{k^*}F(U)$ on a $\Gm^{n+1}$\!--invariant open set $U$ are the sections of $F(U)$ whose characters have last coordinate divisible by $k.$
This functor is exact and induces a functor between the derived categories that we also denote by $\zeta_{k*}.$
Note that $\zeta_{k*}\FF(\pp)$ is naturally isomorphic to  $\FF(\pp')$ where $\pp'(n+1) = \lfloor \frac {\pp(n+1)} k \rfloor$ and
$\pp'(i)=\pp(i)$ for $1\leq i\leq n.$

We set $k=|\sigma|$ and consider the composition functor 
\[
\zeta_{|\sigma|*} \pi^* : \bD^b(\Coh[U_\Sigma/\Gm^n])\lto \bD^b(\Coh[U_{\widetilde\Sigma}/\Gm^{n+1}]).
\]

\begin{theorem}\label{main3}
Let simplicial complex $\widetilde\Sigma$ be obtained from the simplicial complex $\Sigma$ by the stellar subdivision at $\sigma\in\Sigma.$
Then the composition functor $\zeta_{|\sigma|*} \pi^*: \bD^b(\Coh[U_\Sigma/\Gm^n])\to \bD^b(\Coh[U_{\widetilde\Sigma}/\Gm^{n+1}])$ is an equivalence.
\end{theorem}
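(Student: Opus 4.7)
The plan is to verify that $\zeta_{|\sigma|*}\pi^*$ is an equivalence of triangulated categories by showing it is fully faithful on equivariant line bundles and that its essential image generates the target, paralleling the approach used for the special case in Proposition \ref{bla2}.

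First, as recorded in the paper just before the theorem, $\zeta_{|\sigma|*}\pi^*\FF(\pp)\cong\FF^{\widetilde\Sigma}(\pp')$ with $\pp'(i)=\pp(i)$ for $1\leq i\leq n$ and $\pp'(n+1)=\lfloor\sum_{i\in\sigma}\pp(i)/|\sigma|\rfloor$. To check fully faithfulness on these generators, I would compare
\[
\Ext^*_{[U_\Sigma/\Gm^n]}(\FF(\pp_1),\FF(\pp_2))
\quad\text{and}\quad
\Ext^*_{[U_{\widetilde\Sigma}/\Gm^{n+1}]}(\FF^{\widetilde\Sigma}(\pp_1'),\FF^{\widetilde\Sigma}(\pp_2'))
\]
via \v{C}ech covers $\{U_\tau\}_{\tau\in\Sigma}$ and $\{U_{\tau'}\}_{\tau'\in\widetilde\Sigma}$ restricted to the relevant graded pieces as in Lemma \ref{trivial}. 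Using the explicit description \eqref{wts} of $\widetilde\Sigma$, the simplices of $\widetilde\Sigma$ not containing $n+1$ are exactly the simplices of $\Sigma$ not containing $\sigma$, while each $\tau\in\Sigma$ with $\sigma\subseteq\tau$ gets replaced by $|\sigma|$ simplices of $\widetilde\Sigma$ of the form $(\tau\setminus\{i\})\cup\{n+1\}$ for $i\in\sigma$. I would verify that the graded pieces of the two \v{C}ech complexes match term-by-term and differential-by-differential; the divisibility built into $\pp'(n+1)$ is precisely what makes the contributions of the new cones collapse correctly.

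Second, I would show that the essential image generates the target. This image consists of line bundles $\FF^{\widetilde\Sigma}(\qq)$ satisfying $0\leq \sum_{i\in\sigma}\qq(i)-|\sigma|\qq(n+1)<|\sigma|$, whereas all $\qq\in\Zz^{n+1}$ are needed. To bridge this gap, I would apply the Koszul exact sequence on $U_{\widetilde\Sigma}$ associated with the functions $\{z_i\}_{i\in\sigma}$, which is exact because $\sigma\not\in\widetilde\Sigma$ forces $\FF^{\widetilde\Sigma}_{\sigma,\cdot}=0$. This relation, together with its twists, expresses $\FF^{\widetilde\Sigma}(\qq)$ in terms of $\FF^{\widetilde\Sigma}(\qq-\cchi_T)$ for nonempty $T\subseteq\sigma$, and by applying it to $\qq+\cchi_T$ also in terms of $\FF^{\widetilde\Sigma}(\qq+\cchi_T)$. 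Since such shifts change the residue $\sum_{i\in\sigma}\qq(i)-|\sigma|\qq(n+1)$ by $\pm|T|$ while leaving $\qq(n+1)$ unchanged, an induction on this residue confines $\qq$ into the image range, showing every equivariant line bundle lies in the triangulated subcategory generated by the image.

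An exact functor between triangulated categories that is fully faithful on a generating set and whose essential image generates the target is an equivalence, which will complete the proof. The main obstacle is the \v{C}ech calculation for fully faithfulness: the simplices of $\widetilde\Sigma$ containing $n+1$ come in families of size $|\sigma|$ (one for each $i\in\sigma$), and their combined degree-zero contributions must match the single corresponding contribution on the LHS; this matching is forced by the floor formula defining $\pp'(n+1)$, but tracking the signs of the \v{C}ech differentials through the identification is the bulk of the technical work.
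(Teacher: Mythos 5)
Your second half (generation of the target) is essentially the paper's argument: you use the Koszul exact sequence on $U_{\widetilde\Sigma}$ for the variables $\{z_i\}_{i\in\sigma}$, exact because $\sigma\notin\widetilde\Sigma$, to step the residue $\sum_{i\in\sigma}\qq(i)-|\sigma|\qq(n+1)$ into the range $\{0,\dots,|\sigma|-1\}$ realized by the image line bundles. That part is fine.

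The gap is in fully faithfulness. Your plan is to ``verify that the graded pieces of the two \v{C}ech complexes match term-by-term and differential-by-differential,'' but they do not: every maximal $\tau\in\Sigma$ containing $\sigma$ is replaced in $\widetilde\Sigma$ by $|\sigma|$ maximal simplices, so the two \v{C}ech complexes have different sets of indexing terms and different lengths, and there is no term-by-term identification. What is true is that the natural chain map (induced by pullback and passage to the characters with last coordinate divisible by $|\sigma|$) is a quasi-isomorphism, and proving \emph{that} is the entire content of the theorem; you acknowledge the difficulty but do not supply the argument. The paper resolves it by factoring the map on $\Ext$'s through $\pi^*$ and $\zeta_{|\sigma|*}$ separately: the map induced by $\pi^*$ is an isomorphism because $U_{\widetilde\Sigma}/\Gm\to U_\Sigma$ is the blowup along the smooth center $Z_\sigma$ and the derived pullback of a blowup is fully faithful (Orlov), after which one takes torus invariants; and for $\zeta_{|\sigma|*}$ one checks the chain map of \v{C}ech complexes is injective in the one nontrivial case, identifies the cokernel with the \v{C}ech complex of $\FF(s(\qq)-s(\pp))$ restricted to the exceptional divisor $D=\{\tilde z_{n+1}=0\}$, and kills it by observing that $D\to Z_\sigma$ is a $\Pp^{|\sigma|-1}$-bundle on whose fibers this line bundle is acyclic by Lemma \ref{trivial} (this is where the hypothesis that some $\qq(i)-\pp(i)\ge 0$ and some $\qq(j)-\pp(j)<0$ for $i,j\in\sigma$ enters). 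Neither the blowup theorem nor the exceptional-divisor vanishing appears in your proposal, and without some substitute for them the claimed matching of \v{C}ech cohomologies is unproved. I would also note that a ``fully faithful on a generating set of objects, with generating image'' criterion needs the generators to be compared in all cohomological degrees, which is what the $\Ext^*$ computation provides; your framework is right, but the computation itself is the theorem.
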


\begin{proof}
The derived category of $[U_\Sigma/\Gm^n]$ is generated by the line bundles $\FF^\Sigma(\pp)$ for $\pp:\{1,\ldots,n\}\to \Zz.$
We will prove that the maps
\begin{equation}\label{bigclaim}
\Ext^*(\FF^\Sigma(\pp), \FF^\Sigma(\qq))
\lto
\Ext^*(\zeta_{|\sigma|*}\pi^* \FF^\Sigma(\pp), \zeta_{|\sigma|*}\pi^* \FF^\Sigma(\qq))
\end{equation}
are isomorphisms for all $\pp$ and $\qq,$ and that the objects $\zeta_{|\sigma|*} \pi^* \FF^\Sigma(\pp)$ generate the whole category
$\bD^b(\Coh[U_{\widetilde\Sigma}/\Gm^{n+1}]).$

We will first prove that the set of objects of the form  $\zeta_{|\sigma|*} \pi^* \FF^\Sigma(\pp)$  generate the whole category.
As was observed earlier,  $\zeta_{|\sigma|*} \pi^* \FF^\Sigma(\pp) \cong \FF^{\widetilde\Sigma}(s(\pp)),$ where
\[
s(\pp)(n+1) = \left\lfloor \frac {\sum_{i\in\sigma} \pp(i)}{|\sigma|} \right\rfloor
\]
and $s(\pp)(i)=\pp(i)$ for all $1\leq i\leq n.$  Therefore, the subcategory $\DD\subseteq\bD^b(\Coh[U_{\widetilde\Sigma}/\Gm^{n+1}])$ generated by these line bundles contains
$\FF(\qq)$ for all $\qq:\{1,\ldots,n+1\}\to \Zz$ with
\[
\sum_{i\in\sigma}\qq(i) - |\sigma| \qq(n+1) \in \{0,\ldots, |\sigma|-1\}.
\]
Observe that since $\sigma\not\in\widetilde\Sigma,$ there is a long exact sequence  of length $|\sigma|+1$ of sheaves on $U_{\widetilde\Sigma}$
\[
0\lto \FF^\Sigma(- \cchi_\sigma)\lto \cdots \lto \bigoplus_{J\subseteq \sigma, |J|=l} \FF^\Sigma(-\cchi_J)\lto \cdots
\lto \bigoplus_{i\in \sigma} \FF^\Sigma(-\ee_i)\lto \FF^\Sigma\lto 0
\]
coming from the Koszul complex for the variables $\tilde z_i,~i\in\sigma.$ If we twist this sequence by $\FF(\qq)$ with
$\sum_{i\in\sigma}\qq(i) - |\sigma| \qq(n+1) = |\sigma|$ then all but the last term are in the subcategory $\DD.$ Therefore, all
such $\FF(\qq)$ lie in the subcategory $\DD$ as well. We can similarly extend the range of $\sum_{i\in\sigma}\qq(i) - |\sigma| \qq(n+1) $
down by one and repeat the process to get all $\FF(\qq).$

So we only need to verify that the maps in \eqref{bigclaim} are indeed isomorphisms.
These maps are the compositions of
\begin{equation}\label{mappi}
\Ext^*(\FF^\Sigma(\pp), \FF^\Sigma(\qq))
\lto
\Ext^*(\pi^* \FF^\Sigma(\pp), \pi^* \FF^\Sigma(\qq))
\end{equation}
and
\begin{equation}\label{maprho}
\Ext^*(\pi^* \FF^\Sigma(\pp), \pi^* \FF^\Sigma(\qq))
\lto
\Ext^*(\zeta_{|\sigma|*}\pi^* \FF^\Sigma(\pp), \zeta_{|\sigma|*} \pi^* \FF^\Sigma(\qq)).
\end{equation}
We observe that the maps of \eqref{mappi} are isomorphisms because they are simply the $\Gm^{n}$\!--invariant parts of the maps induced by the derived pullback  for the map $U_{\widetilde\Sigma}/\Gm\to U_\Sigma,$ where we take the quotient by the free action of $\Gm,$ which is the kernel of the map of \eqref{blp}. This is precisely the blowup of $U_\Sigma$ at the smooth closed subvariety $Z_\sigma$ given by
\[
Z_\sigma = U_\Sigma \cap \Big(\bigcap_{i\in\sigma}\{ z_i = 0\}\Big)=U_\Sigma\setminus \Big(\bigcup_{\tau\nsupseteq\sigma}U_\tau\Big).
\]
It is well known that the derived pullback of a blowup is fully faithful in the derived category of non-equivariant coherent sheaves (see, e.g., \cite{Or}).
When we apply this statement to $\Ext^*(\FF^\Sigma(\pp),\FF^\Sigma(\qq))$ and take $\Gm^{n}$ invariants  we see that
the maps of \eqref{mappi} are isomorphisms.

Let us now investigate the maps of \eqref{maprho}. The $\Ext^*$ spaces on the left can be computed by the \v{C}ech complex for the
open affine subsets $U_\tau$ of $U_{\widetilde\Sigma}$ for all maximum $\tau\in\widetilde\Sigma.$ The component for a nonempty
set $T$ of these maximum subsets is given by
the $\Gm^{n+1}$ invariant parts of
\[
\Hom(\FF(t(\pp))\Big\vert_{\!\!\mathop{\bigcap}\limits_{\tau\in T}\!U_\tau},\;
\FF(t(\qq))\Big\vert_{\!\!\mathop{\bigcap}\limits_{\tau\in T}\!U_\tau}),
\quad\text{where}\quad
t(\pp)(n+1)=\sum_{i\in\sigma} \pp(i),\; t(\pp)(i)=\pp(i),\; i\le n
\]
 and similarly for $t(\qq).$ The $\Hom$ space in question is isomorphic to the product of polynomial rings in $\tilde z_i$ for $i\in \mathop{\bigcap}\limits_{\tau\in T} \tau$ and Laurent polynomial rings in $\tilde z_i$ for $i\not\in \mathop{\bigcap}\limits_{\tau\in T} \tau$ with the grading of the generator shifted by $t(\pp)-t(\qq).$ As a consequence, the $T$ component
is isomorphic to $\kk$ if
$t(\qq)\geq t(\pp)$ on $\mathop{\bigcap}\limits_{\tau\in T} \tau$ and is zero otherwise.

The spaces on the right are obtained by looking at the graded components of the sheaves $\FF(t(\pp))\Big\vert_{\!\!\mathop{\bigcap}\limits_{\tau\in T}\!U_\tau}$ and $\FF(t(\qq))\Big\vert_{\!\!\mathop{\bigcap}\limits_{\tau\in T}\!U_\tau},$ where the last coordinate is divisible by $|\sigma|.$ They are equal to
$\kk$ if $s(\qq)\geq s(\pp)$ on $\mathop{\bigcap}\limits_{\tau\in T} \tau$ and are zero otherwise. The chain map between the two \v{C}ech complexes preserves the components. If both spaces are $\kk$ then it is nonzero, otherwise it is zero.

We will show that this chain map of complexes is a quasi-isomorphism. Note that if

\[
t(\qq)(n+1)-t(\pp)(n+1)=\sum_{i\in\sigma}(\qq(i)-\pp(i))\quad \text{and}\quad
s(\qq)(n+1)-s(\pp)(n+1)=
\left\lfloor \frac{\sum_{i\in\sigma}\qq(i)}{|\sigma|} \right\rfloor - \left\lfloor \frac{\sum_{i\in\sigma}\pp(i)}{|\sigma|} \right\rfloor
\]
are both negative or are both nonnegative, then the chain map is an isomorphism at all $T$ components, and the statement is clear.
We also observe that $t(\qq)(n+1)\geq t(\pp)(n+1)$ implies  $s(\qq)(n+1)\geq s(\pp)(n+1),$ so we only need to consider the
case when $t(\qq)(n+1)< t(\pp)(n+1)$ and $s(\qq)(n+1)-s(\pp)(n+1)\geq 0.$
The first inequality implies that $\qq(i)<\pp(i)$ for some $i\in\sigma.$ However, if we had  $\qq(i)<\pp(i)$ for all $i\in\sigma,$ then we would have $t(\qq)\leq t(\pp)-|\sigma|$ which would lead to $s(\qq)(n+1)< s(\pp)(n+1).$
This means, in particular, that we only need to consider the case
\begin{equation}\label{case}
\exists\; i, j\in\sigma\quad {~\rm such~that~}\quad \qq(i)-\pp(i)\geq 0,\quad\text{and}\quad \qq(j)-\pp(j)<0.
\end{equation}

Since $t(\qq)(n+1)< t(\pp)(n+1)$ and $s(\qq)(n+1)-s(\pp)(n+1)\geq 0,$ the map of complexes is injective. The cokernel  complex $C$ has $T$\!--components equal to $\kk$ if and only if all $\tau\in T$ contain $n+1$ and
$s(\qq)\geq s(\pp)$ on $\mathop{\bigcap}\limits_{\tau\in T}\tau.$  From now on we can ignore the elements of $\widetilde\Sigma$ that do not contain $n+1.$  This allows us to observe that $C$ is the $\Gm^{n+1}$\!--invariant part of the \v{C}ech complex that computes the cohomology of the restriction of the line bundle $\FF(s(\qq)-s(\pp))\Big|_{D}$ to the divisor $D=\{\tilde z_{n+1}=0\}$ in $U_{\widetilde\Sigma}.$

We observe that $D$ is the exceptional divisor and has a $\Gm$\!--invariant morphism to the center of the blowup $Z_\sigma\subset U_\Sigma.$ Moreover, the morphism $p: D\to Z_\sigma$ is $\Pp^{|\sigma|-1}$\!--bundle
over $Z_{\sigma}.$ Now we can see that the derived direct image ${\mathbf R} p_{*} (\FF(s(\qq)-s(\pp))|_{D})$ is zero. Indeed, since the property \eqref{case} holds for the line bundle $\FF(s(\qq)-s(\pp)),$ its restriction
on the fibers of $p$ have trivial cohomology by Lemma \ref{trivial}. Thus the cohomology of $\FF(s(\qq)-s(\pp))\Big|_{D}$ are also trivial.
\end{proof}

Theorem \ref{main3} has interesting consequences. 
As we discussed in the introduction, to any simplicial complex $\Sigma$ one can associate a topological space $|\Sigma|$ and then study PL-homeomorphism classes of $|\Sigma|.$
The following result indicates that $\bD^b(\Coh[U_{\Sigma}/\bT])$ is an invariant of PL-homeomorphism classes.
\begin{theorem}\label{main4}
Let $\Sigma_1$ and $\Sigma_2$ be two simplicial complexes.
If polyhedra $|\Sigma_1|$ and $|\Sigma_2|$ are PL-homeomorphic, then the  derived categories
$\bD^b(\Coh[U_{\Sigma_1}/\bT_1])$ and $\bD^b(\Coh[U_{\Sigma_2}/\bT_2])$
 of torus-equivariant coherent sheaves on $U_{\Sigma_1}$ and $U_{\Sigma_2}$ respectively are equivalent.
\end{theorem}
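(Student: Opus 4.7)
The plan is to reduce Theorem \ref{main4} to Theorem \ref{main3} using the combinatorial characterization of PL-homeomorphism mentioned in the introduction: by the classical Alexander--Newman theorem (see, e.g., \cite{Lickorish}), the polyhedra $|\Sigma_1|$ and $|\Sigma_2|$ are PL-homeomorphic if and only if $\Sigma_2$ can be obtained from $\Sigma_1$ by a finite sequence of stellar subdivisions and stellar welds (the latter being the inverse operation of stellar subdivision). Consequently, it suffices to show that $\bD^b(\Coh[U_\Sigma/\bT])$ is invariant, up to equivalence, under each individual stellar move, and then compose the equivalences along a chain of moves connecting $\Sigma_1$ to $\Sigma_2$.

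For a stellar subdivision $\Sigma\rightsquigarrow\widetilde\Sigma$ at a simplex $\sigma\in\Sigma$ of size at least two, Theorem \ref{main3} directly supplies the required equivalence via the explicit functor $\zeta_{|\sigma|*}\pi^*\colon \bD^b(\Coh[U_\Sigma/\Gm^n])\to \bD^b(\Coh[U_{\widetilde\Sigma}/\Gm^{n+1}])$. For a stellar weld $\widetilde\Sigma\rightsquigarrow\Sigma$, which by definition is the inverse operation, the same theorem applies in the opposite direction: since $\widetilde\Sigma$ is a stellar subdivision of $\Sigma$, the functor $\zeta_{|\sigma|*}\pi^*$ is an equivalence, and its quasi-inverse supplies the equivalence needed for the weld. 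Since the case $|\sigma|=1$ corresponds to no actual subdivision (only a relabeling), we may restrict to genuine subdivisions where $|\sigma|\ge 2$, which is exactly the setting covered by Theorem \ref{main3}.

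Concatenating these equivalences along any finite sequence of stellar subdivisions and welds connecting $\Sigma_1$ to $\Sigma_2$ produces the desired equivalence $\bD^b(\Coh[U_{\Sigma_1}/\bT_1])\simeq \bD^b(\Coh[U_{\Sigma_2}/\bT_2])$. Since each intermediate simplicial complex lives on its own finite vertex set, the composition will involve a sequence of tori of varying dimensions, but at each step the equivalence respects the equivariant structure, so the composition is well defined.

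The only substantive step is Theorem \ref{main3}, which has already been established. Thus the main (and essentially the only) obstacle in the present argument is bookkeeping: one has to make sure that the combinatorial move called ``stellar weld'' in the PL-topology literature really is the literal inverse of the blowup construction \eqref{wts} used in Theorem \ref{main3}, so that invoking the quasi-inverse of $\zeta_{|\sigma|*}\pi^*$ legitimately realizes it. Once this identification is made explicit, the theorem follows at once by induction on the length of a stellar-move sequence realizing the PL-homeomorphism.
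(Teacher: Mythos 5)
Your proposal is correct and takes essentially the same approach as the paper: invoke the Alexander--Newman/Lickorish theorem to reduce PL-homeomorphism to a finite chain of stellar subdivisions and welds, apply Theorem \ref{main3} (or its quasi-inverse) at each step, and compose. Your added remarks about excluding the trivial case $|\sigma|=1$ and about the weld being the literal inverse of the blowup construction are sensible clarifications but do not change the argument.
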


\begin{proof}
It is a (non-trivial) known result that PL-homeomorphic simplicial complexes can be connected by stellar subdivisions and their inverses which are called stellar welds  (see, e.g., \cite{Lickorish}). This means that there 
exist simplicial complexes
\[
\Sigma_1 = \Sigma_1', \Sigma_2',\ldots,\Sigma_k'=\Sigma_2
\]
such that for every $ 1\leq i<k$ the complex $\Sigma_{i+1}'$ is a stellar subdivision of $\Sigma_i'$ or vice versa. By Theorem \ref{main3}, the categories 
$\bD^b(\Coh[U_{\Sigma_{i}'}/\bT_i'])$ and $\bD^b(\Coh[U_{\Sigma_{i+1}'}/\bT_{i+1}'])$ are equivalent
which leads to the desired equivalence of $\bD^b(\Coh[U_{\Sigma_1}/\bT_1])$ and $\bD^b(\Coh[U_{\Sigma_2}/\bT_2]).$
\end{proof}

\begin{corollary}
All proper toric Deligne-Mumford stacks of a given dimension $k$ have equivalent bounded derived categories of $\Gm^k$\!--equivariant coherent sheaves.
\end{corollary}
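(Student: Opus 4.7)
The strategy is to reduce to Theorem~\ref{main4} after verifying that the polyhedra $|\Sigma|$ attached to proper toric DM stacks of a fixed dimension $k$ are all mutually PL-homeomorphic. First I would recall the identification $\bD^b(\Coh[\Pp_\Sigma/\bT_{\Pp_\Sigma}])\cong\bD^b(\Coh[U_\Sigma/\Gm^n])$ given at the beginning of Section~\ref{sec4}, so that the statement reduces to an assertion about the categories $\bD^b(\Coh[U_\Sigma/\bT])$, together with the standard fact that properness of $\Pp_\Sigma$ is equivalent to completeness of the fan $\Sigma$ in $N_\Rr\cong\Rr^k$.

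Next I would check that for any two complete simplicial fans $\Sigma_1,\Sigma_2$ in $\Rr^k$, the polyhedra $|\Sigma_1|$ and $|\Sigma_2|$ are PL-homeomorphic. Concretely, the piecewise-affine map $|\Sigma_i|\to N_\Rr$ sending the abstract vertex $e_j$ to the primitive ray generator $v_j$ of $\Sigma_i$, extended linearly on each simplex, identifies $|\Sigma_i|$ with the topological boundary of the star-shaped region $\bigcup_{\sigma\in\Sigma_i}\Conv(\{0\}\cup\{v_j\}_{j\in\sigma})$; completeness and the simplicial property ensure that this boundary is a genuine $(k-1)$-sphere covered without overlap by the top-dimensional simplices, so each $|\Sigma_i|$ is a PL $(k-1)$-sphere.

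To actually compare $|\Sigma_1|$ and $|\Sigma_2|$ I would build a common simplicial refinement $\Sigma'$ by intersecting the cones of $\Sigma_1$ with those of $\Sigma_2$ and then triangulating, by choosing compatible simplicial subdivisions of the resulting polyhedral cones, to obtain a simplicial fan refining both $\Sigma_1$ and $\Sigma_2$. As abstract simplicial complexes, $|\Sigma'|$ is a subdivision of both $|\Sigma_1|$ and $|\Sigma_2|$, and subdivisions are tautologically PL-homeomorphisms. Once this PL-homeomorphism is in hand, Theorem~\ref{main4} immediately produces the equivalence $\bD^b(\Coh[U_{\Sigma_1}/\bT])\simeq\bD^b(\Coh[U_{\Sigma_2}/\bT])$, which translates into the desired equivalence of equivariant derived categories on the two proper DM stacks.

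The main obstacle is constructing the common simplicial refinement: intersecting the cones of two simplicial fans generally produces a polyhedral but non-simplicial fan, so one must invoke a consistent triangulation of this intersection structure. This is standard in toric geometry but does require explicit combinatorial bookkeeping (alternatively, one could bypass the refinement and appeal to the PL Poincar\'e conjecture to conclude directly that any two PL $(k-1)$-spheres are PL-homeomorphic, which is however unresolved in the problematic dimension $k=5$). Once the refinement is in place, everything else is a clean application of the results of Section~\ref{sec4}.
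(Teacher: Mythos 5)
Your proposal is correct and follows the same route as the paper: reduce to Theorem \ref{main4} by observing that the simplicial complexes attached to complete simplicial fans in $\Rr^k$ are all PL-homeomorphic (to the standard PL $(k-1)$-sphere). The paper simply asserts this PL-homeomorphism as clear, whereas you supply the standard justification via a common simplicial refinement of the two fans; that is the right way to make the claim precise, and your remark that one should not instead rely on recognizing the complexes abstractly as PL spheres (because of the open PL Poincar\'e problem for $k=5$) is well taken.
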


\begin{proof}
The simplicial complexes in question are clearly PL-homeomorphic to each other and to the $(k-1)$\!--dimensional sphere with the natural PL-structure on it.
\end{proof}

We would like to point out some interesting questions.
\begin{question}
{\rm
Are derived categories  $\bD^b(\Coh[U_\Sigma/\Gm^n])$ invariants of homeomorphisms or only PL-homeomorphisms? In other words, if two simplicial spaces are homeomorphic but not  PL-homeomorphic, what can be said about these categories? One can find examples of such simplicial spaces in \cite{Edwards}.
}
\end{question}

\begin{question}
{\rm
What can be said about the group of auto-equivalences of  $\bD^b(\Coh[U_\Sigma/\Gm^n])$ or its image in the group of automorphisms of the Grothendieck group?}
\end{question}

\section{Additional remarks and discussion}\label{sec.5}
\subsection{Full exceptional collections of line bundles.}
It is interesting to ask whether every equivariant derived category of a toric DM stack admits a full exceptional collection made from (shifts of) equivariant line bundles. This can be viewed as an analog of King's conjecture, which has been studied in great detail, see for example \cite{Efimov,HillePerling,BorisovHua}.

If such collections exist in the non-equivariant setting, they can be lifted to the equivariant one. For example, if we  take the usual collection $\langle\FF,\FF(1),\FF(2)\rangle$ on $\Pp^2$ we obtain the collection
$$
\FF(a_1,a_2,a_3)  {\rm ~with~} 0\leq a_1+a_2+a_3\leq 2
$$
on $\Aa^3\setminus\{0\}.$ Similarly, the usual full exceptional collection of line bundles on the weighted projective space with weights $w_1,\ldots,w_n$  lifts
to the collection
$$
\{\FF(a_1,\ldots,a_n),~0\leq \sum_{i=1}^n w_i a_i < \sum_{i=1}^n w_i\}
$$
of equivariant line bundles on $\Aa^n\setminus\{0\}.$ We are assuming that the weights have no common divisor, so that the corresponding toric stack has trivial stack structure at the generic point.

While we suspect that most toric varieties and stacks do not possess full exceptional collections of line bundles, we were able to construct some rather unexpected examples. Specifically, we considered $\Pp^2$ without three torus fixed points. This is a toric variety given in the lattice $\Zz^2$ by the fan $\Sigma$ that consists of three one-dimensional cones
generated by $v_1=(1,0),$ $v_2=(0,1)$ and $v_3=(-1,-1)$ and the zero-dimensional cone.
The corresponding simplicial complex is $\Sigma=\{\{1\},\{2\},\{3\},\emptyset\}.$

This fan gives rise to the subset $U_\Sigma\subset \Aa^3$ with the property that at most one coordinate of $(z_1,z_2,z_3)$ can be zero. The category of equivariant sheaves on $\Pp^2$ without the zero-dimensional orbits is equivalent to the category of
$\Gm^3$\!--equivariant sheaves on $U_\Sigma.$ The equivariant line bundles are indexed by $\pp:\{1,2,3\}\to\Zz.$ The following proposition describes the $\Ext$ spaces between the sheaves $\FF^\Sigma(\pp_i).$

\begin{proposition}\label{weirdExt}
Let $0\leq m\leq 3$ be the number of nonnegative coordinates of $(\pp_2-\pp_1).$ Then $\Ext^s(\FF^\Sigma(\pp_1),\FF^\Sigma(\pp_2))=0$ unless $(m,s)=(3,0),$ $(m,s)=(1,1),$ or $(m,s)=(0,1).$ In the last case the dimension of the $\Ext^1$ is two.
\end{proposition}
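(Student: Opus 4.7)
The plan is a direct \v{C}ech cohomology computation, much in the spirit of Lemma \ref{trivial} and the corresponding parts of the proof of Theorem \ref{main3}. Since both sheaves are $\bT$-equivariant invertible sheaves, setting $\pp := \pp_2 - \pp_1$ we have
\[
\Ext^s(\FF^\Sigma(\pp_1),\FF^\Sigma(\pp_2)) \;\cong\; H^s(U_\Sigma, \FF^\Sigma(\pp))_0,
\]
where the subscript $0$ denotes the degree-zero (i.e.\ $\bT$-invariant) part.

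The key observation is that the natural open cover $\{U_{\{i\}}\}_{i=1,2,3}$ of $U_\Sigma$ consists of affine pieces $U_{\{i\}} \cong \Aa^1\times\Gm^2$, and every pairwise and triple intersection equals $U_\emptyset = \Gm^3$. A direct calculation of the grading-zero parts of the sections gives
\[
\dim \Gamma(U_{\{i\}},\FF^\Sigma(\pp))_0 = \begin{cases}1, & \pp(i)\ge 0,\\ 0, & \text{otherwise,}\end{cases}\qquad \dim \Gamma(U_\emptyset,\FF^\Sigma(\pp))_0 = 1,
\]
so that the degree-zero \v{C}ech complex takes the explicit form
\[
\kk^m \;\xrightarrow{\;d^0\;}\; \kk^3 \;\xrightarrow{\;d^1\;}\; \kk,
\]
where $m$ is the number of nonnegative coordinates of $\pp$. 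The differentials are the usual alternating sums of restriction maps, each of which is an isomorphism $\kk\to\kk$ whenever both source and target are nonzero.

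The remaining argument is a case-by-case check on $m$. For $m=3$ the complex is the augmented simplicial chain complex of $\partial\Delta^2$, exact except for $H^0=\kk$. For $m=0$, $d^0=0$ and $d^1$ is surjective with two-dimensional kernel, giving $\Ext^1 \cong \kk^2$. For $m=1$ the single nonzero column of $d^0$ produces a rank-one vector lying in $\ker d^1$, yielding $\Ext^1 \cong \kk$. For $m=2$ one checks that the image of $d^0$ exhausts the two-dimensional $\ker d^1$, and all cohomology vanishes. The main (but mild) technicality is verifying this last cancellation in the $m=2$ case; it reduces to confirming that the two columns of $d^0$, which have the shape $(-1,-1,0)$ and $(1,0,-1)$, are linearly independent and both annihilated by $d^1$. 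Everything else is mechanical.
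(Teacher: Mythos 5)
Your proposal is correct and follows essentially the same route as the paper: both reduce to the degree-zero part of the \v{C}ech cohomology of $\FF^\Sigma(\pp_2-\pp_1)$ for the same three-element affine cover (your $U_{\{i\}}$ are the paper's $U_{j,k}=\{z_j\neq 0, z_k\neq 0\}$), arriving at the same complex $\kk^m\to\kk^3\to\kk$. The only difference is presentational: you work out the four cases of $m$ explicitly, while the paper disposes of them by noting that the second map is always surjective and the first is injective for $m\leq 2$.
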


\begin{proof}
It suffices to calculate the degree zero part of the cohomology of $\FF^\Sigma(\pp)$ with the number of
nonnegative coordinates of $\pp$ equal to $m.$
Consider the  ${\rm \check Cech}$ complex for the open cover with the open affine subsets
\[
U_{1,2}=\{z_1\neq 0,z_2\neq 0\},~U_{1,3}=\{z_1\neq 0,z_3\neq 0\},~U_{2,3}=\{z_2\neq 0,z_3\neq 0\},~
\]
whose pairwise intersections are $\Gm^3.$ The sections of $\FF^\Sigma_{\pp}$ on $U_{1,2}$ are the space
\[
\kk[z_1^{\pm 1},z_2^{\pm 1},z_3](\pp).
\]
Therefore, the degree zero part is $\kk$ if $\pp(3)\geq 0$ and is $0$ if $\pp(3)<0.$ Similar statements hold for the other $U_{i,j},$ and their intersections. So we get
the degree zero part of the ${\rm \check Cech}$ complex to be
\[
0\to \kk^m \to \kk^3\to \kk\to 0.
\]
The second map is always surjective, and the first map is easily seen to be injective for $m\leq 2,$ which implies the claim of the proposition.
\end{proof}

\begin{proposition}
Consider $U_\Sigma=\Aa^3\setminus\mathop{\bigcup}\limits_{i\neq j}\{z_i=z_j=0\}.$
The following ordered set $S$ of points $\pp\in\Zz^3$
\[
\{(0,0,k),k\ge 0\} \prec \{(a,-a,0), a\neq 0\} \prec \{(a,1-a,0), a\neq 1\} \prec \{(1,0,-k), k\ge 0\}
\]
gives a full exceptional collection of line bundles $\FF^\Sigma(\pp)$ in the derived category $\bD^b(\Coh[U_\Sigma/\Gm^3]).$

The ordering among $(0,0,k)$ is by increasing $k,$ and  the ordering among $(1,0,-k)$ is by decreasing $k.$ The ordering among $(a,-a,0)$ is arbitrary, as is the ordering among $(a,1-a,0).$
\end{proposition}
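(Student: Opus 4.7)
The proof has two parts: verifying that the collection is exceptional in the stated ordering, and verifying fullness. The key tools are Proposition \ref{weirdExt} for the Ext computation, and the Koszul-type short exact sequences on $U_\Sigma$
$$0 \to \FF^\Sigma(\pp - \ee_i - \ee_j) \to \FF^\Sigma(\pp - \ee_i) \oplus \FF^\Sigma(\pp - \ee_j) \to \FF^\Sigma(\pp) \to 0, \quad i \neq j,$$
which are exact because $z_i$ and $z_j$ have no common zero on $U_\Sigma = \Aa^3 \setminus \bigcup_{i \neq j}\{z_i = z_j = 0\}.$

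For the exceptional check, Proposition \ref{weirdExt} tells us that $\Ext^*(\FF^\Sigma(\pp_1), \FF^\Sigma(\pp_2))$ is nonzero precisely when $m \in \{0, 1, 3\}$, where $m$ is the number of nonnegative coordinates of $\pp_2 - \pp_1$. So I must verify that for $\pp_i \prec \pp_j$ in the proposed ordering, $\pp_i - \pp_j$ has exactly two nonnegative coordinates. I would run through the finite list: six between-group pairs (A-B, A-C, A-D, B-C, B-D, C-D) and three within-group cases. In each case a direct inspection shows $m = 2$: in within-A and within-D the first two coordinates are zero and the third is strictly negative; in every other case the third coordinate is nonnegative (from $k, k' \ge 0$ or $= 0$), and the first two coordinates cannot both be nonnegative nor both strictly negative, the exclusion of $m = 1$ relying on integrality in the B-C case (no integer lies strictly between $a$ and $a + 1$) and on the constraints $a \neq 0, a \neq 1$ in the other between-group cases. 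Each object is exceptional since $\pp = \pp'$ gives $m = 3$ and $\Ext^0 = \kk$.

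Fullness follows from using the Koszul sequence to propagate line bundles in the triangulated subcategory $\DD$ generated by the collection: any three of the four bundles $\FF^\Sigma(\pp)$, $\FF^\Sigma(\pp - \ee_i)$, $\FF^\Sigma(\pp - \ee_j)$, $\FF^\Sigma(\pp - \ee_i - \ee_j)$ force the fourth. The generation proceeds in two stages. First, I produce every $\FF^\Sigma(a, b, 0)$: the diagonals $a + b = 0$ and $a + b = 1$ are entirely in the collection (from groups B, C together with the seeds $(0,0,0) \in A$ and $(1,0,0) \in D$), and the $(i,j) = (1, 2)$ Koszul relation propagates outward by induction on $|a + b|$. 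Second, for each $c > 0$ I take the seed $(0, 0, c) \in A$ and use the $(1, 3)$ and $(2, 3)$ Koszul relations to fill the plane of third-coordinate $c,$ inducting on $a$ (with $b = 0$) and then on $b$; symmetrically for $c < 0$ using $(1, 0, c) \in D$ as the seed. Since equivariant line bundles generate $\bD^b(\Coh[U_\Sigma/\Gm^3])$, this completes fullness.

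The main obstacle is bookkeeping rather than any conceptual leap. The exceptional collection verification comprises nine case subanalyses with the integer-boundary argument in the B-C case being the most delicate, and the fullness argument iterates Koszul relations in three orthogonal directions, with care required for the seed points at the interfaces between the four groups. No tool beyond Proposition \ref{weirdExt} and the Koszul exactness on $U_\Sigma$ is required.
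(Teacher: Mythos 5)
Your proposal is correct and follows essentially the same route as the paper: reduce exceptionality via Proposition \ref{weirdExt} to checking that $\pp_1-\pp_2$ has exactly two nonnegative coordinates whenever $\pp_1\prec\pp_2$, then prove fullness with the Koszul short exact sequences by first filling the plane $c=0$ from the two diagonals $a+b\in\{0,1\}$ and then filling each plane $c\neq 0$ from the single seed $(0,0,c)$ or $(1,0,-k).$ The only cosmetic difference is that the paper shortens the case analysis by exploiting the order-reversing symmetry $\pp\mapsto(1,0,0)-\pp$ of the collection, whereas you check all pairs of groups directly.
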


\begin{proof}
As a consequence of Proposition \ref{weirdExt}, a line bundle $\FF^\Sigma(\pp)$ on the stack $[U_\Sigma/\Gm^3]$ is acyclic
if and only if $\pp$ has
two nonnegative coordinates. Therefore, an ordered collection of line bundles is exceptional if and only if the corresponding subset of $\Zz^3$ satisfies the property that  $\pp_1-\pp_2$ has exactly two nonnegative entries for all $\pp_1\prec \pp_2.$

Let us check that the above collection is exceptional. We observe that every element $\pp$ other than $(0,0,0)$ has exactly two nonpositive coordinates. Therefore, all $(-\pp)$ have two nonnegative coordinates, and we can claim the property for $\pp_1=(0,0,0).$
 The collection is symmetric with respect to $(\pp\mapsto (1,0,0)-\pp)$ so the same can be said for $\pp_2=(1,0,0).$ For $(0,0,k_1)=\pp_1\prec \pp_2 = (0,0,k_2)$ we see that $\pp_1-\pp_2=(0,0,k_1-k_2)$ has two nonnegative coordinates. For $(0,0,k)=\pp_1\prec \pp_2=(a,-a,0),a\neq 0$ we have $\pp_1-\pp_2=(-a,a,k)$ which has two nonnegative coordinates because $a\neq 0.$
For $(0,0,k)=\pp_1\prec \pp_2=(a,1-a,0)$ we have  $\pp_1-\pp_2=(-a,a-1,k)$ which has two nonnegative coordinates for any $a.$ For
$(0,0,k_1)=\pp_1\prec \pp_2 = (1,0,-k_2)$ we have $\pp_1-\pp_2=(-1,0,k_1+k_2)$ with two nonnegative coordinates.
For $\pp_1$ and $\pp_2$ both in the set $(a,-a,0),$ the difference is of the same form, with two nonnegative coordinates. For
$(a_1,-a_1,0)=\pp_1\prec \pp_2 = (a_2,1-a_2,0),$  the difference is $\pp_1-\pp_2=(a_3,-1-a_3,0)$ which has two nonnegative coordinates.
All the other cases are handled by the symmetry above.

To show that the above collection generates the derived category, observe that we have the following short exact sequences on $U_\Sigma$
\[
0 \lto \FF^\Sigma(\pp-(1,1,0)) \lto \FF^\Sigma(\pp-(1,0,0))\oplus \FF^\Sigma(\pp-(0,1,0))\lto \FF^\Sigma(\pp)\lto 0
\]
and similarly for the other pairs of coordinates. It means that if we have three points in a coordinate square, then the category would contain the line bundle that corresponds to the fourth point. We now see that using these coordinate squares we can get all of $\Zz^3$ from the set $S.$
Since $S$ contains all $(a,b,0)$ with $a+b\in\{0,1\},$ we can get  $(a,b,0)$ for all $a$ and $b.$ Then we use $\pp=(0,0,1)$ to get all $(a,b,1),$ then use $(0,0, 2)$ to get all $(a,b,2),$ and so on. We use $(1,0,-1)$ to get $(a,b,-1),$ then use $(1,0,-2)$ to get $(a,b,-2),$ and so on.
\end{proof}

\begin{remark}
{\rm
The above full exceptional collection can not be made into a strong exceptional one by cohomological shifts. Indeed, there are nonzero spaces
\[
\Hom (\FF^\Sigma(0,0,0), \FF^\Sigma(0,0,1) ), ~\Hom (\FF^\Sigma(0,0,0), \FF^\Sigma(0,1,0) ), ~\Hom (\FF^\Sigma(-1,1,0), \FF^\Sigma(0,1,0)),
\]
which means that these line bundles would need to come with the same shift. However, we have
\[
\Ext^1(\FF^\Sigma(0,0,1),\FF^\Sigma(-1,1,0)) \neq 0.
\]
}
\end{remark}

\begin{remark}
{\rm
If we similarly remove all codimension two coordinate subspaces from $\Pp^{n}$ for $n>2,$ then exceptional  collections of line bundles correspond to ordered subsets of $\Zz^{n+1}$ with the property that $\pp_1\prec \pp_2$ implies that $\pp_1-\pp_2$ has exactly $n$ nonnegative coordinates.
We do not currently know if one can find full exceptional collections of line bundles in this case.
}
\end{remark}

\subsection{From commutative to noncommutative toric stacks and back again}
It is also interesting to consider noncommutative toric stacks and to understand derived category of equivariant coherent sheaves with respect to the maximal torus on them.
We can show that these categories are actually equivalent to the corresponding categories on commutative stacks.

Let $L$ be a finitely generated abelian group and  $A=\bigoplus_{\pp\in L}A_{\pp}$ be an $L$\!--graded $\kk$\!--algebra.
Denote by $\GrL A$ the abelian category of all right $L$\!--graded modules over $A.$

With any such algebra $A$ we can associate a $\kk$\!--linear category $\tA$ whose set of objects is exactly $L$ and
whose spaces of morphisms $\tA_{\pp \qq}:=\Hom_{\tA}(\pp, \qq)$ are isomorphic to $A_{\qq-\pp}.$
The composition of morphisms in the category $\tA$ is induced by multiplication in the algebra $A.$
Note that the category $\tA$ is a full subcategory of the category $\GrL  A$ formed by free modules
$A(\pp)$ for all $\pp\in L.$ A category of the form $\tA$ is often called $L$\!--algebra (see, e.g., \cite{BGS}).

A right module $\tM$ over the category $\tA$ is a functor from the opposite category $\tA^{\op}$
to the category of $\kk$\!--vector spaces $\Mod k.$ Equivalently, a right $\tA$\!--module is given by a set of  $\kk$\!--vector spaces
$\{\tM_{\pp}\},$ for each $\pp\in L,$ and by morphisms of vector spaces
$
\tM_{\qq}\otimes \tA_{\pp\qq}\to \tM_{\pp}
$
compatible with compositions and units.
Right $\tA$\!--modules forms an abelian category which we denote by $\Mod \tA.$
The abelian category $\Mod \tA$ is naturally equivalent to the abelian category $\GrL  A$ of $L$\!--graded
right $A$\!--modules. A natural equivalence sends a right $L$\!--graded module
$M=\bigoplus_{\pp\in L} M_{\pp}$ to the right $\tA$\!--module $\tM,$ for which
$\tM_{\pp}=M_{-\pp}.$ In fact, this equivalence  is given by the functor
\[
\Hom_{\GrL  A}(\bigoplus_{\pp\in L} A(\pp), -): \GrL  A\stackrel{\sim}{\lto} \Mod \tA.
\]
In some sense this equivalence gives us a generalization of Proposition \ref{linebundles}.

Let $\bD(\GrL  A)$ be the unbounded derived category of all $L$\!--graded modules.
It is a triangulated category which admits arbitrary small coproducts (direct sums).
An object $C^{\cdot}\in \bD(\GrL  A)$ is called {\em compact} if $\Hom(C^{\cdot}, -)$ commutes with arbitrary small coproducts.
Compact objects in $\bD(\GrL  A)$ form a full triangulated subcategory
that is called the subcategory of perfect complexes (objects) and is denoted by $\perf A.$

\begin{remark}
{\rm
In the case when the algebra $A$ is noetherian and  has finite global dimension the category of perfect complexes $\perf A$ is equivalent
to
the bounded derived category of finitely generated modules $\bD^b(\grL A).$ 
}
\end{remark}

The set of free modules $\big\{A({\pp}),\;{\pp}\in L\big\}$ forms a set of compact generators for
the derived category $\bD(\GrL  A).$ This means that they are compact, i.e. $A(\pp)\in \perf A,$
and for any object $Z^{\cdot}\in \bD(\GrL  A)$ there are $A(\pp)$ and $n\in\Z$ such that $\Hom(A(\pp), Z^{\cdot}[n])\ne 0.$
Definition of compact generators is  closely related to the definition of classical generators.
Recall that a set of objects $S\subset \perf A$
forms a {\em set of classical generators} if the category $\perf A$ coincides with the smallest triangulated subcategory of
$\perf A$ which contains $S$ and is closed under taking direct summands.
It is proved in \cite{Ne1} that for any compactly generated triangulated category $\TT$  a set $S\subset \TT^c$ of compact objects is a set of
classical generators for the subcategory of compact objects $\TT^c\subset\TT$ if and only if
the set $S$ is a set of compact generators of $\TT.$

In our case we obtain that the set of free modules $\big\{A({\pp}),\;{\pp}\in L\big\}$ classically generates the category $\perf A$ and we
obtain the following proposition.

\begin{proposition}
Let $A=\bigoplus_{\pp\in L}A_{\pp}$ be an $L$\!--graded algebra. Suppose that $A_{\00}\cong\kk$ and there is a function $w:L\to\Z$ such that
$A_{\qq-\pp}=0$ when $w(\qq)\ge w(\pp)$ and $\qq\ne \pp.$
Then the set of free modules $\big\{A({\pp}),\;{\pp}\in L\big\}$ forms a full exceptional collection in the triangulated category
of perfect complexes $\perf A\subset \bD(\GrL  A).$ (Note that $w$ is not assumed to be linear).
\end{proposition}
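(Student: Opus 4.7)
The plan is to verify the exceptional condition directly from the hypothesis on $A_{\qq-\pp}$ and to deduce fullness from the Neeman criterion recalled just before the statement.

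First I will fix a total order $\prec$ on $L$ refining the partial order ``$\pp \prec \qq$ iff $w(\pp) > w(\qq)$,'' breaking ties on each level set of $w$ by an arbitrary choice. Each $A(\pp)$ is a rank-one free $L$-graded module, hence projective, and in particular a compact object of $\bD(\GrL A)$ lying in $\perf A$. Projectivity yields $\Ext^{>0}(A(\pp), A(\qq)) = 0$, while $\Hom_{\GrL A}(A(\pp), A(\qq)) = A_{\qq-\pp}$, which equals $A_{\00} \cong \kk$ when $\pp = \qq$. For $\pp \prec \qq$ one has $w(\pp) \geq w(\qq)$, so applying the hypothesis with the pair swapped gives $\Hom(A(\qq), A(\pp)) = A_{\pp-\qq} = 0$. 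In the tie case $w(\pp) = w(\qq)$, $\pp \neq \qq$, the hypothesis kills both directions symmetrically, so the tiebreak is harmless. This shows that $\{A(\pp)\}_{\pp \in L}$ is an exceptional collection (in fact strong, since all higher $\Ext$'s vanish).

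For fullness I will invoke the criterion from \cite{Ne1} recalled above: it suffices to show that $\{A(\pp)\}_{\pp \in L}$ is a set of compact generators of the ambient triangulated category $\bD(\GrL A)$. Suppose $Z^\cdot \in \bD(\GrL A)$ satisfies $\Hom_{\bD(\GrL A)}(A(\pp), Z^\cdot[n]) = 0$ for every $\pp \in L$ and $n \in \Z$. Projectivity of $A(\pp)$ identifies this group with $\Hom_{\GrL A}(A(\pp), H^n(Z^\cdot)) = H^n(Z^\cdot)_{-\pp}$, so every graded piece of every cohomology module of $Z^\cdot$ vanishes, whence $Z^\cdot \simeq 0$. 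Thus $\{A(\pp)\}$ compactly generates $\bD(\GrL A)$ and by Neeman classically generates $\perf A$.

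The only mildly delicate point is the bookkeeping around the ordering: because $w$ is neither assumed linear nor injective, one needs the symmetric vanishing of $A_{\qq-\pp}$ on level sets of $w$ in order for an arbitrary tiebreak to be harmless. Beyond this, every step is formal once the Neeman criterion is imported.
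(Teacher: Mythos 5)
Your proof is correct and follows essentially the same route as the paper, which embeds its justification in the discussion immediately preceding the proposition: the twisted free modules are projective compact objects whose Hom-spaces are the graded pieces $A_{\qq-\pp}$, they detect all cohomology and hence compactly generate $\bD(\GrL A)$, and Neeman's theorem converts compact generation into classical generation of $\perf A$. Your explicit construction of the total order refining $w$ and the observation that ties are harmless by the symmetric vanishing is a useful bit of bookkeeping the paper leaves implicit, but it is not a different argument.
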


It is frequently the case that  two non-isomorphic $L$\!--graded algebras $A$ and $B$ produce equivalent  categories
$\tA$ and $\tB$ and, hence, they have equivalent abelian categories of $L$\!--graded modules $\GrL  A$ and $\GrL  B.$

\begin{example}
{\rm
A simple example is given by $\Z$\!--graded algebras 
\[
S_2=\kk[x_1, x_2]=\kk\{x_1,x_2\}/\langle x_1x_2-x_2x_1\rangle
\] 
and 
\[
S_{2,q}=\kk\{y_1,y_2\}/\langle y_1y_2-qy_2y_1\rangle
\] 
for some $q\in \kk^*.$ It is not difficult to see that the categories $\tS_2$ and $\tS_{2,q}$
are equivalent. This can be done directly and would also follow from Corollary \ref{5.9} below.
}
\end{example}

Let us consider the polynomial algebra $\Sn=\kk\left[x_1,\ldots, x_n\right]$ with the maximal grading $L=\Z^n.$
The corresponding $\kk$\!--linear category $\tSn$ that is the category of full exceptional collection of
free modules $\big\{\Sn(\pp), \pp\in\Z^n\big\}$ has the following properties:
\begin{equation*}\label{ND}
\parbox{.915\textwidth}{
\begin{enumerate}
\item[(1)] the set of objects is isomorphic to
$\Z^n,$
\item[(2)]
$\Hom(\pp_1 ,\pp_2)\cong\kk,$ when $\pp_1\leq \pp_2,$
otherwise they are equal to
$0,$
\item[(3)] the composition
$
\Hom(\pp_2 ,\pp_3)\otimes \Hom(\pp_1 ,\pp_2) \stackrel{\sim}{\lto} \Hom(\pp_1 ,\pp_3)
$
is nondegenerate  for all $\pp_1\leq \pp_2\leq \pp_3.$
\end{enumerate}
}\leqno \raisebox{8pt}{$(\widetilde\Zz^n)$}
\end{equation*}

The main result of this  subsection is the following observation that
 the category $\tSn$ is uniquely defined by the properties $(\widetilde\Zz^n).$
\begin{proposition}\label{noncom}
Let $\tE$ be a category for which $(\widetilde\Zz^n)$ hold.
Then $\tE$ is equivalent to $\tSn.$
\end{proposition}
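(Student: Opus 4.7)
The plan is to construct a $\kk$-linear functor $F: \tSn \to \tE$ which is the identity on objects (using the identification of $\Ob \tE$ with $\Zz^n$ from~(1)) and an isomorphism on every Hom space, hence an equivalence. Since both categories have one-dimensional Hom spaces in exactly the same pattern by~(2), it suffices to send a chosen generator of each nonzero $\Hom_{\tSn}(\pp_1, \pp_2)$ to a nonzero element of $\Hom_{\tE}(\pp_1, \pp_2)$ in a manner compatible with composition.

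First I would pick, for each $\pp \in \Zz^n$ and $i \in [n]$, a nonzero element $\rho_i(\pp) \in \Hom_{\tE}(\pp, \pp+\ee_i)$; these exist and are unique up to scalar by~(2). For each pair $i \neq j$ and each $\pp$, the compositions $\rho_j(\pp+\ee_i) \circ \rho_i(\pp)$ and $\rho_i(\pp+\ee_j) \circ \rho_j(\pp)$ both lie in the one-dimensional space $\Hom_{\tE}(\pp, \pp+\ee_i+\ee_j)$ and both are nonzero by the nondegeneracy~(3). Hence they differ by a unique scalar $c_{ij}(\pp) \in \kk^*$, with $c_{ji}(\pp) = c_{ij}(\pp)^{-1}$.

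The core technical step is to rescale $\rho_i(\pp) \mapsto \lambda_i(\pp)\rho_i(\pp)$ with $\lambda_i(\pp) \in \kk^*$ so that the rescaled generators square-commute:
\[
\rho_j(\pp + \ee_i) \circ \rho_i(\pp) = \rho_i(\pp + \ee_j) \circ \rho_j(\pp).
\]
This amounts to solving $\lambda_j(\pp+\ee_i)\lambda_i(\pp) c_{ij}(\pp) = \lambda_i(\pp+\ee_j)\lambda_j(\pp)$ for all $i<j$ and all $\pp$. I would proceed inductively: set $\lambda_1 \equiv 1$ and successively use the $(1,j)$-equations (with free initial data on an appropriate hyperplane) to define $\lambda_j$ for $j=2,\dots,n$. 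The consistency of the remaining $(i,j)$-equations with $1<i<j$ is forced by the one-dimensionality of $\Hom_{\tE}(\pp, \pp+\ee_i+\ee_j+\ee_k)$: any two compositions along paths from $\pp$ to $\pp+\ee_i+\ee_j+\ee_k$ agree up to a uniquely determined scalar, and the associativity of composition in $\tE$ translates into a cube-cocycle identity among the $c$'s that is precisely what is needed.

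Once square-commutativity holds, any two monotone lattice paths from $\pp_1$ to $\pp_2$ are connected by a sequence of square flips, so give the same composition of $\rho$'s in $\tE$. I would then define $F$ on morphisms by sending the generator of $\Hom_{\tSn}(\pp_1,\pp_2)$ to this common composition; functoriality follows from concatenation of paths, and $F$ is a bijection on objects and an isomorphism on every Hom space, hence an equivalence. The hard part will be the rescaling step: it is the assertion that the 2-cocycle $c_{ij}(\pp)$ on the directed poset $\Zz^n$ with values in $\kk^*$ is a coboundary. Conceptually this holds because the intervals $[\pp_1,\pp_2] \subseteq \Zz^n$ are contractible cubes, but a rigorous argument must combine the inductive construction of the $\lambda_i$'s with the cube consistency extracted from associativity in $\tE$.
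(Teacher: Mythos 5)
Your proposal follows essentially the same route as the paper's first proof: your scalar $c_{ij}(\pp)$ is the paper's $\Theta^{ij}_{\pp}$, your rescaling $\lambda_i(\pp)$ plays the role of the paper's $a^i_{\pp}$, and the ``cube-cocycle identity among the $c$'s extracted from associativity'' is exactly the paper's Yang--Baxter relation~\eqref{YB}. One caution: the ``free initial data on an appropriate hyperplane'' is not actually free. The $(i,j)$-equations with $1<i<j$ restricted to the hyperplane $\pp(1)=0$ already constrain $\lambda_2,\ldots,\lambda_n$ there, so the initial data must itself be produced by a nested induction on $n$; this is what the paper's explicit recurrence ($a^s_{\pp}=1$ on the sublattice $\pp(1)=\cdots=\pp(s-1)=0,$ then propagate via the smallest nonzero coordinate) encodes, and the induction on $|\pp|=\sum_r|\pp(r)|$ that verifies \eqref{rel} is the precise form of the consistency check you describe informally. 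You may also find it worth comparing with the paper's shorter alternative proof, which sidesteps the cocycle trivialization entirely: one fixes compositions $Y_{m_1,m_2}$ along the diagonal objects $\overline m=(m,\ldots,m),$ anchors each $\pp$ to $\overline{\min(\pp)}$ via a chosen morphism $U_\pp,$ and then defines the normalized generator $V_{\pp_1,\pp_2}$ by transport; associativity is then a one-line consequence of cancellation in the one-dimensional Hom spaces, with no Yang--Baxter bookkeeping needed.
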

\begin{proof}
Let $\yY^{i}_{\pp}$ be some elements of $\Hom_{\tE}(\pp, \pp+\ee_i).$ Properties (2) and (3) of $(\widetilde\Zz^n)$
imply that there are invertible elements $ \Theta^{ij}_{\pp}\in\kk^*$ such that
$\Theta^{ij}_{\pp}=(\Theta^{ji}_{\pp})^{-1}$ and
\begin{equation*}
\yY^i_{\pp+\ee_j} \yY^j_{\pp}=\Theta^{ij}_{\pp} \yY^j_{\pp+\ee_i} \yY^i_{\pp}.
\end{equation*}
These elements are not independent and there are the following  basic Yang--Baxter type relations
\begin{equation}\label{YB}
\Theta^{jk}_{\pp+\ee_i}\Theta^{ik}_{\pp} \Theta^{ij}_{\pp+\ee_k}=\Theta^{ij}_{\pp}\Theta^{ik}_{\pp+\ee_j}\Theta^{jk}_{\pp}.
\end{equation}
These relations follow directly from the existence of two different ways
of passing from the composition $\yY^i_{\pp+\ee_k+\ee_j}\yY^j_{\pp+\ee_k}\yY^k_{\pp}$ to the composition $\yY^k_{\pp+\ee_j+\ee_i}\yY^j_{\pp+\ee_i}\yY^i_{\pp}.$
Indeed, we have

\begin{align*}
\begin{split}
\yY^i_{\pp+\ee_k+\ee_j}\yY^j_{\pp+\ee_k}\yY^k_{\pp}& = \Theta^{jk}_{\pp}\yY^i_{\pp+\ee_k+\ee_j}\yY^k_{\pp+\ee_j}\yY^j_{\pp}=
\Theta^{ik}_{\pp+\ee_j}\Theta^{jk}_{\pp}\yY^k_{\pp+\ee_j+\ee_i}\yY^i_{\pp+\ee_j}\yY^j_{\pp}=\\
& = \Theta^{ij}_{\pp}\Theta^{ik}_{\pp+\ee_j}\Theta^{jk}_{\pp}\yY^k_{\pp+\ee_j+\ee_i}\yY^j_{\pp+\ee_i}\yY^i_{\pp},
\end{split}
\\
\begin{split}
\yY^i_{\pp+\ee_k+\ee_j}\yY^j_{\pp+\ee_k}\yY^k_{\pp}& = \Theta^{ij}_{\pp+\ee_k}\yY^j_{\pp+\ee_k+\ee_i}\yY^i_{\pp+\ee_k}\yY^k_{\pp}=
\Theta^{ik}_{\pp} \Theta^{ij}_{\pp+\ee_k}\yY^j_{\pp+\ee_k+\ee_i}\yY^k_{\pp+\ee_i}\yY^i_{\pp}=\\
 & = \Theta^{jk}_{\pp+\ee_i}\Theta^{ik}_{\pp} \Theta^{ij}_{\pp+\ee_k}\yY^k_{\pp+\ee_j+\ee_i}\yY^j_{\pp+\ee_i}\yY^i_{\pp}.
\end{split}
\end{align*}

Let us make a change of variables and put $\xX^s_{\pp}=a^s_{\pp}\yY^s_{\pp},$ where
coefficients $a^s_{\pp}\in\kk^*$ are uniquely defined by the following recurrence relations:

\begin{itemize}
\item
$a^s_{\pp}=1$ for every $\pp$ such that $\pp(1)=\cdots=\pp(s-1)=0.$ In particular, $a^1_{\pp}=1$ for all $\pp\in \Z^n,$
\item
for every $\pp$ such that $\pp(1)=\cdots=\pp(r-1)=0$ for some $r<s$ we have $a^s_{\pp}=\Theta^{rs}_{\pp-\ee_r} a^s_{\pp-\ee_r}.$
In particular, $a^s_{\pp}=\Theta^{1s}_{\pp-\ee_1} a^s_{\pp-\ee_1}$ for all $\pp\in\Z^n.$
\end{itemize}

Let us show that $\xX^j_{\pp+\ee_k} \xX^k_{\pp}=\xX^k_{\pp+\ee_j} \xX^j_{\pp}$ for all $\pp\in\Z^n,$ i.e.  we have to check that
\begin{equation}\label{rel}
\Theta^{jk}_{\pp} a^j_{\pp+\ee_k} a^k_{\pp}=a^k_{\pp+\ee_j} a^j_{\pp}
\end{equation}
for all $\pp.$

We can assume that $j< k$ and
proceed by induction on $|\pp|=\sum_r |\pp(r)|.$
Let $i$ be such that $\pp(1)=\cdots=\pp(i-1)=0$ and $\pp(i)\ne 0.$
If $j\le i,$ then $a^j_{\pp+\ee_k}=a^j_{\pp}=1$ and $a^k_{\pp+\ee_j}=\Theta^{jk}_{\pp}a^k_{\pp}.$ Hence, the relation (\ref{rel}) holds.
Let $i< j < k.$ Assume that $\pp(i)>0.$ Then we have
\begin{multline*}
a^k_{\pp+\ee_j} a^j_{\pp}=\Theta^{ij}_{\pp-\ee_i} \Theta^{ik}_{\pp-\ee_i + \ee_j} a^k_{\pp-\ee_i + \ee_j} a^j_{\pp-\ee_i}=
\Theta^{ij}_{\pp-\ee_i} \Theta^{ik}_{\pp-\ee_i + \ee_j} \Theta^{jk}_{\pp-\ee_i}a^j_{\pp-\ee_i + \ee_k}
a^k_{\pp-\ee_i}\overset{(\ref{YB})}{=\joinrel=\joinrel=}\\
=\Theta^{jk}_{\pp}\Theta^{ik}_{\pp-\ee_i} \Theta^{ij}_{\pp-\ee_i+\ee_k}a^j_{\pp-\ee_i + \ee_k} a^k_{\pp-\ee_i}=\Theta^{jk}_{\pp}a^j_{\pp +
\ee_k} a^k_{\pp},
\end{multline*}
where the second equation holds by the induction hypothesis.
If now $\pp(i)<0$ we have the following sequence of equations
\begin{multline*}
a^j_{\pp+\ee_k} a^k_{\pp}= \Theta^{ji}_{\pp+ \ee_k} \Theta^{ki}_{\pp} a^j_{\pp+\ee_i + \ee_k} a^k_{\pp+\ee_i}=
\Theta^{ji}_{\pp+ \ee_k} \Theta^{ki}_{\pp}\Theta^{kj}_{\pp+\ee_i}  a^k_{\pp+\ee_i + \ee_j}
a^j_{\pp+\ee_i}\overset{(\ref{YB})}{=\joinrel=\joinrel=}\\
=\Theta^{kj}_{\pp} \Theta^{ki}_{\pp+\ee_j}\Theta^{ji}_{\pp}  a^k_{\pp+\ee_i + \ee_j} a^j_{\pp+\ee_i} =\Theta^{kj}_{\pp} a^k_{\pp+\ee_j}
a^j_{\pp},
\end{multline*}
where the second equation holds by the induction hypothesis as well.

Thus, we obtain $\xX^j_{\pp+\ee_k} \xX^k_{\pp}=\xX^k_{\pp+\ee_j} \xX^j_{\pp}$ for all $\pp\in\Z^n.$ Therefore, $\tE$ is equivalent to $\tSn.$
\end{proof}

\emph{Alternative proof.}
For each integer $m$ denote by $\overline m$  the object in the category that corresponds to $(m,\ldots,m)\in\Z^n.$
Pick nonzero elements $Y_m\in \Hom( \overline m, \overline{m+1}).$ We use compositions of $Y_m$ to define nonzero elements
$Y_{m_1,m_2} \in \Hom(\overline {m_1},\overline {m_2})$ for all $m_1\leq m_2,$ with $Y_{m,m}$ being identity.
These elements will automatically satisfy
\begin{equation}\label{assoY}
Y_{m_2,m_3} \circ Y_{m_1,m_2} = Y_{m_1,m_3}
\end{equation}
for all $m_1\leq m_2\leq m_3.$

For each $\pp\in \Z^n$ pick a nonzero element $U_{\pp}\in \Hom( \overline{\min (\pp)}, \pp).$
For each $\pp_1\leq \pp_2$ in $\Z^n$ pick the unique nonzero element $V_{\pp_1,\pp_2} \in \Hom(\pp_1,\pp_2)$ with the property
\[
V_{\pp_1,\pp_2} \circ U_{\pp_1} =  U_{\pp_2} \circ Y_{\min(\pp_1),\min(\pp_2)}.
\]
We claim that for any $\pp_1\leq \pp_2\leq \pp_3$ there holds $V_{\pp_2,\pp_3} \circ V_{\pp_1,\pp_2} =V_{\pp_1,\pp_3}.$

Indeed, we have
\begin{multline*}
V_{\pp_2,\pp_3} \circ  V_{\pp_1,\pp_2} \circ U_{\pp_1} =  V_{\pp_2,\pp_3} \circ U_{\pp_2} \circ Y_{\min(\pp_1),\min(\pp_2)} =
U_{\pp_3} \circ Y_{\min(\pp_2),\min(\pp_3)} \circ Y_{\min(\pp_1),\min(\pp_2)}=\\ 
= U_{\pp_3} \circ Y_{\min(\pp_1),\min(\pp_3)}
\end{multline*}
by definition of $V$ and \eqref{assoY}.
Together with
$
V_{\pp_1,\pp_3} \circ U_{\pp_1}  =
 U_{\pp_3} \circ Y_{\min(\pp_1),\min(\pp_3)}$
this implies
\[
 V_{\pp_2,\pp_3} \circ  V_{\pp_1,\pp_2} \circ U_{\pp_1}=V_{\pp_1,\pp_3} \circ U_{\pp_1}
\]
which then leads to the desired associativity of $V.$ The choices of $V$ allow one to construct the equivalence to $\tSn.$
\hfill{$\Box$}

\medskip
Let $\theta=(\theta^{ij})\in \oM_n(\kk^*)$ be a multiplicatively  skew-symmetric matrix, i.e. $\theta^{ji}=(\theta^{ij})^{-1}$ and
$\theta^{ii}=1.$
Consider a noncommutative algebra $\Snt=\kk_{\theta}[y_1, \ldots, y_n]$ that is a noncommutative toric deformation of
the polynomial algebra $\Sn=\kk[x_1,\ldots, x_n].$ The algebra $\Snt$ is generated by the variables $y_1, \ldots, y_n$
with commutation relations of the form $y_i y_j=\theta^{ij} y_j y_i.$ It can be considered as $\Z^n$\!--graded algebra.
Let us take the abelian category $\GrZ \Snt$ of all right $\Z^n$\!--graded modules and
the category $\tSnt\subset \GrZ \Snt$ formed by the full exceptional collection of free modules $(\Snt(\pp), \pp\in\Z^n).$
The following statement is a corollary of Proposition~\ref{noncom}.
\begin{corollary}\label{5.9}
Let $\Snt=\kk_{\theta}[y_1,\ldots, y_n]$ be the algebra with relations $y_i y_j=\theta^{ij} y_j y_i$ for some
multiplicatively skew-symmetric matrix $\theta=(\theta^{ij})\in \oM_n(\kk^*).$ Then the category $\tSnt$ is equivalent to the category
$\tSn,$ and the abelian categories $\GrZ \Snt$ and $\GrZ \Sn$ are equivalent too.
\end{corollary}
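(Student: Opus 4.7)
The plan is to verify that the $\kk$--linear category $\tSnt$ satisfies the axioms $(\widetilde\Zz^n)$ listed before Proposition~\ref{noncom}, and then invoke that proposition to obtain the equivalence $\tSnt \simeq \tSn$. The equivalence of abelian categories of graded modules will then follow from the general module-category formalism developed earlier in the subsection.

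For the first property, the set of objects of $\tSnt$ is $\Z^n$ by construction. For the second property, I would observe that $\Hom_{\tSnt}(\pp_1,\pp_2) \cong (\Snt)_{\pp_2-\pp_1}$. The key structural fact about $\Snt$ is that, despite the noncommutativity, the ordered monomials $y_1^{a_1}\cdots y_n^{a_n}$ with $a_i\ge 0$ still form a $\kk$--basis of $\Snt$, and each such monomial is homogeneous of $\Z^n$--degree $(a_1,\ldots,a_n)$. Hence the graded component $(\Snt)_{\qq}$ is one-dimensional when $\qq\ge 0$ (all coordinates nonnegative) and zero otherwise, which gives exactly the desired description of $\Hom_{\tSnt}(\pp_1,\pp_2)$.

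For the third property, pick the monomial generator $y_1^{a_1}\cdots y_n^{a_n}$ of $\Hom(\pp_1,\pp_2)$ where $\pp_2-\pp_1 = (a_1,\ldots,a_n)\ge 0$ and similarly $y_1^{b_1}\cdots y_n^{b_n}$ for $\Hom(\pp_2,\pp_3)$. Their composition in $\tSnt$ is given by multiplication in $\Snt$ and equals a nonzero scalar (a product of powers of the entries $\theta^{ij}$) times the monomial $y_1^{a_1+b_1}\cdots y_n^{a_n+b_n}$, which is the distinguished generator of $\Hom(\pp_1,\pp_3)$. Since this scalar lies in $\kk^*$, composition is nondegenerate, establishing $(\widetilde\Zz^n)$. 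I expect this to be routine, with the only potentially delicate point being bookkeeping of the $\theta^{ij}$ factors, but that is precisely what Proposition~\ref{noncom} is designed to absorb.

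Applying Proposition~\ref{noncom} yields a $\kk$--linear equivalence $\tSnt \stackrel{\sim}{\to} \tSn$. For the final claim, recall from the discussion earlier in this subsection that for any $\Z^n$--graded $\kk$--algebra $A$ there is a natural equivalence of abelian categories $\GrZ A \simeq \Mod\tA$ induced by the functor $\Hom_{\GrZ A}(\bigoplus_{\pp}A(\pp),-)$. Applying this to both $\Sn$ and $\Snt$ and combining with $\tSnt \simeq \tSn$ gives the equivalence $\GrZ \Snt \simeq \GrZ \Sn$, completing the proof.
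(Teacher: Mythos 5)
Your proposal follows the same route as the paper: verify the axioms $(\widetilde\Zz^n)$ for $\tSnt$, apply Proposition~\ref{noncom}, and then pass from $\tSnt\simeq\tSn$ to $\GrZ\Snt\simeq\GrZ\Sn$ via the equivalence $\GrZ A\simeq\Mod\tA$. The paper's proof is a one-liner that takes the verification of $(\widetilde\Zz^n)$ for granted; your fleshing out of that verification using the PBW-type basis of ordered monomials in $\Snt$ is correct and is exactly what is implicitly being used.
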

\begin{proof}
The category $\tSnt$ satisfies properties $(\widetilde\Zz^n)$ and, hence, by Proposition~\ref{noncom} it is equivalent to $\tSn.$
Since the abelian category $\GrZ \Snt$ is equivalent to the abelian category $\Mod \tSnt,$ we obtain that
$\GrZ \Snt$ is equivalent to $\GrZ \Sn.$
\end{proof}

\begin{remark}
More generally, we don't expect any interesting noncommutative deformations of $[U_\Sigma/\Gm^n]$ 
for a simplicial complex $\Sigma,$ in contrast to the non-equivariant setting.
\end{remark}


\end{document}